\theoremstyle{plain}
\newtheorem{lemma}{Lemma}[section]
\newtheorem{prop}{Proposition}[section]
\theoremstyle{remark}
\def\B{\operatorname{B}}
\def\C{\operatorname{C}}
\def\D{\operatorname{D}}
\def\E{\operatorname{E}}
\def\F{\operatorname{F}}
\def\G{\operatorname{G}}
\def\I{\operatorname{I}}
\def\J{\operatorname{J}}
\def\O{\operatorname{O}}
\def\U{\operatorname{U}}
\def\SL{\operatorname{SL}}
\def\GL{\operatorname{GL}}
\def\SO{\operatorname{SO}}
\def\Sp{\operatorname{Sp}}
\def\SU{\operatorname{SU}}
\def\PSO{\operatorname{PSO}}
\def\PSU{\operatorname{PSU}}
\def\Ad{\operatorname{Ad}}
\def\Aut{\operatorname{Aut}}
\def\diag{\operatorname{diag}}
\def\det{\operatorname{det}}
\def\dim{\operatorname{dim}}
\def\exp{\operatorname{exp}}
\def\Hom{\operatorname{Hom}}
\def\id{\operatorname{id}}
\def\Im{\operatorname{Im}}
\def\Int{\operatorname{Int}}
\def\Lie{\operatorname{Lie}}
\def\Pin{\operatorname{Pin}}
\def\rank{\operatorname{rank}}
\def\span{\operatorname{span}}
\def\Spin{\operatorname{Spin}}
\def\Stab{\operatorname{Stab}}
\newcommand{\fra}{\mathfrak{a}}
\newcommand{\frf}{\mathfrak{f}}
\newcommand{\frg}{\mathfrak{g}}
\newcommand{\frh}{\mathfrak{h}}
\newcommand{\frl}{\mathfrak{l}}
\newcommand{\frs}{\mathfrak{s}}
\newcommand{\frt}{\mathfrak{t}}
\newcommand{\fru}{\mathfrak{u}}
\newcommand{\bbC}{\mathbb{C}}
\newcommand{\bbR}{\mathbb{R}}
\newcommand{\bbZ}{\mathbb{Z}}
\begin{document}

\title[Maximal abelian subgroups]{Maximal abelian subgroups of Spin groups and some exceptional simple
Lie groups}
\author{Jun Yu}
\date{}

\abstract{We classify closed abelian subgroups of the simple groups $\G_2$, $\F_4$, $\Aut(\mathfrak{so}(8))$
having centralizer the same dimension as the dimension of the subgroup, as well as finite abelian subgroups
of certain spin and half-spin groups having finite centralizer.}
\endabstract

\maketitle

\noindent {\bf Mathematics Subject Classification (2010).} 20E07, 20E45, 20K27.

\noindent {\bf Keywords.} Maximal abelian subgroup, Weyl group.

\tableofcontents

\section{Introduction}

In this paper we continue the classification of maximal abelian subgroups of compact simple Lie groups in
\cite{Yu2}, where we give a classification of such subgroups of matrix groups. The same as in \cite{Yu2}, we
study closed abelian subgroups $F$ of a compact simple Lie group $G$ satisfying the condition of
\[\dim\frg_0^{F}=\dim F,\tag{$\ast$}\] where $\frg_0$ is the Lie algebra of $G$. Precisely, we discuss such
subgroups of compact simple Lie groups $\G_2$, $\F_4$, and of the automorphism group $\Aut(\mathfrak{so}(8))$
of $\mathfrak{so}(8)$, as well as finite abelian subgroups of the spin group $\Spin(n)$ for $n=7$, $8$, $10$,
$12$, $14$, and of the half-spin group $\Spin(12)/\langle c\rangle$.

Given a finite abelian subgroup $F$ of $\Spin(n)$ satisfying the condition $(\ast)$, one can show that the
projection of $F$ to $\SO(n)$ is diagonalizable. Hence we could assume that $$F\subset K_{n}:=
\langle e_1e_2,e_1e_3,\dots,e_1e_{n}\rangle.$$ One has $\Spin(n)^{e_1e_2\dots e_{k}}\cong(\Spin(k)\times
\Spin(n-k))/\langle(-1,-1)\rangle$. In low ranks, we have $\Spin(3)\cong\Sp(1)$, $\Spin(4)\cong\Sp(1)\times
\Sp(1)$, $\Spin(5)\cong\Sp(2)$, $\Spin(6)\cong\SU(4)$, which are matrix groups. For an even integer $n\leq 14$,
in the case of $F$ contains elements in specific conjugacy classes, we are able to
classify $F$ satisfying the condition $(\ast)$ by reducing to matrix groups. By a combinatorial argument,
we are also able to show that $F$ always contains elements in these specific conjugacy classes if it satisfies
the condition $(\ast)$. The group $\Spin(12)/\langle c\rangle$ is closely related to the group
$\SO(12)/\langle-I\rangle$, abelian subgroups of the latter are well understood in \cite{Yu2}. To classify
closed abelian subgroups of $\G_2$ or $\F_4$ satisfying the condition $(\ast)$, we need to employ a new method.
By Lemma \ref{L:center}, we reduce this classification to the classification of finite abelian subgroups of
derived groups of Levi subgroups, which are matrix groups or the group itself. The difficult part is the
classification of finite abelian subgroups of $\G_2$ or $\F_4$ itself. For this we recall a theorem of Steinberg
which indicates that the fixed point subgroup $G^{\sigma}$ is connected if $G$ is a connected and simply
connected compact Lie group and $\sigma$ is an automorphism of it. For $G=\G_2$ or $\F_4$, since it is
connected and simply connected, the centralizer $G^{x}$ must be semisimple if $x$ is an element of a finite
abelian subgroup $F$ of $G$ satisfying the condition $(\ast)$. Full rank semisimple subgroups are well
understood (cf. \cite{Oshima}). Therefore we could identify the possible conjugacy class of the subgroup $
G^{x}$ and also the conjugacy class of $x$. Again $G^{x}$ is a matrix group if $x$ is a non-identity element,
by which we can use results from \cite{Yu2}. The group $\Aut(\mathfrak{so}(8))$ looks like both a matrix group
and an exceptional simple Lie group. Its component group is isomorphic to $S_3$. Given an abelian subgroup $F$
of $G=\Aut(\mathfrak{so}(8))$, in the case of $F$ does not meet the connected components with outer automorphisms
of order 3, $F$ is conjugate to a subgroup of $\O(8)/\langle-I\rangle$, which is studied in \cite{Yu2}. In the
case of $F$ meets such connected components, Lemma \ref{L:center-outer} reduces the classification to the
classification of finite abelian subgroups of derived groups of (disconnected) Levi subgroups. Again the
difficult part is for finite abelian subgroups of $G$ itself. For this we use Steinberg's theorem (applied to
outer automorphisms) and reduce the classification to finite abelian subgroups of $\G_2$ or $\PSU(3)$. We also
describe Weyl groups of maximal abelian subgroups. This is achieved through the description of fusions of maximal
abelian subgroups in the classification. Recall that the fusion of an abelian subgroup $F$ is the distribution
of conjugacy classes of elements of $F$.

Note that maximal abelian subgroups of $\G_2$, $\F_4$, $\Aut(\mathfrak{so}(8))$ are in one-to-one
correspondence with fine group gradings of the complex simple Lie algebras $\frg_{2}(\bbC)$, $\frf_4(\bbC)$,
$\mathfrak{so}(8,\bbC)$. These fine group gradings are known in the literature (cf. \cite{Elduque-Kochetov}).
Our approach through the study of maximal abelian subgroups not only gives this classifiaction in an easy way,
but also describes the Weyl groups very well. Finally, we remark that for the study of closed abelian subgroups
of simple groups of type $\bf E_6$, $\bf E_7$ or $\bf E_8$, we will employ the same method as the method for
$\bf G_2$, $\bf F_4$ and $\bf D_4$. On the other hand, the finite abelian subgroups of spin groups and half-spin
groups obtained here will appear in the study of non-finite closed abelian subgroups of $\E_6$, $\E_7$ and
$\E_8$ satisfying the condition $(\ast)$.

\smallskip

\noindent{\it Notation and conventions.} Given a Euclidean linear space $V$ of dimension $n$ with an
orthonormal basis $\{e_1,e_2,\dots,e_{n}\}$, denote by $\Pin(n)$ ($\Spin(n)$) the pin (spin) group of
degree $n$ associated to $V$. Write \[c=c_{n}:=e_1e_2\cdots e_{n}\in\Pin(n).\] Then, $c\in\Spin(n)$ if
and only if $n$ is even. In this case $c$ lies in the center of $\Spin(n)$. Let $I_{n}$ be the $n\times n$
identity matrix. We define the following matrices,
\[I_{p,q}=\left(\begin{array}{cc} -I_{p}&0\\ 0&I_{q}\\\end{array}\right),\
J_{n}=\left(\begin{array}{cc} 0&I_{n}\\ -I_{n}&0\\\end{array} \right),\
\J'_{n}=\left(\begin{array}{cc}0&\I_{n}\\\I_{n}&0\\\end{array}\right).\]
Given a compact Lie group $G$ and a closed abelian subgroup $H$, let $W(H)=N_{G}(H)/C_{G}(H)$ and we call
it the Weyl group of $H$ (in $G$). Denote by $\mathbb{H}$ a quaternion algebra over $\mathbb{R}$ with
standard basis $\{1,\textbf{i},\textbf{j},\textbf{k}\}$. The symplectic group
$\Sp(1)=\{a+b\textbf{i}+c\textbf{j}+d\textbf{k}: a,b,c,d\in\mathbb{R},a^2+b^2+c^2+d^2=1\}$. Note that
we have $\Sp(1)\cong\SU(2)$. In this paper we always use $\Sp(1)$ instead of $\SU(2)$ as it is
easier to write a quaternion number instead of a $2\times 2$ matrix.


\smallskip

\noindent{\it Acknowledgement.} We thank the anonymous referee for careful reading and helpful
comments.


\section{Spin and half-spin groups}

Let $G=\Spin(n)$ and $\pi:\Spin(n)\longrightarrow\SO(n)$ be the natural projection. Given a finite
abelian subgroup $F$ of $G$ satisfying the condition $(*)$, without loss of generality we assume
that $-1\in F$ and furthermore $c\in F$ if $2|n$. Since $F$ satisfies the condition $(\ast)$, as
a finite abelian subgroup of $\SO(n)$, $\pi(F)$ also satisfies the condition $(\ast)$. By this, the
centralizer $\SO(n)^{\pi(x)}$ has finite center for any $x\in F$. By calculation one shows that
$\pi(x)\sim I_{p,n-p}$ for some $0\leq p\leq n$. Then, an inductive process shows that $\pi(F)$ is
a diagonalizable of $\SO(n)$. By this we may and do assume that $\pi(F)$ is contained in the subgroup
of diagonal matrices in $\SO(n)$. Hence $F\subset\langle e_1e_2,e_1e_3,\dots,e_1e_{n}\rangle$. For
an index set $I=\{i_1,i_2,\dots,i_{k}\}\subset\{1,2,\dots,n\}$, $i_1<i_2<\cdots<i_{k}$, let
\[e_{I}=e_{i_1}e_{i_2}\cdots e_{i_{k}}.\] For an even integer $k$, let
\[X_{k}=\{I\subset\{1,2,\dots,n\}: e_{I}\in F\textrm{ and }|I|=k\}.\] For any $i,j$, $1\leq i<j\leq n$,
since \[\Spin(n)^{e_{i}e_{j}}\cong(\Spin(n-2)\times\Spin(2))/\langle(-1,-1)\rangle\] has center of
positive dimension, $F$ does not contain the element $e_{i}e_{j}$. Thus $X_2=\emptyset$.

\begin{lemma}\label{L:Spin-1}
For $n\geq 3$, let $F\subset\Spin(n)$ be a finite abelian subgroup satisfying the condition $(*)$. Then
$F/(F\cap\langle-1,c\rangle)$ is an elementary abelian 2-group and
\[\log_{2}n\leq\rank(F/(F\cap\langle-1,c\rangle))\leq\frac{n-1}{2}.\]
\end{lemma}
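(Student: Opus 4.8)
The plan is to push the entire question through the projection $\pi$ into linear algebra over $\bbF_2$, reading off the two inequalities from, respectively, the abelian condition (for the upper bound) and the condition $(\ast)$ (for the lower bound). Since $F\subset K_{n}=\langle e_1e_2,\dots,e_1e_{n}\rangle$ and $\ker\pi\cap K_{n}=\langle-1\rangle$, the map $\pi$ identifies $F/\langle-1\rangle$ with a subgroup $U:=\pi(F)$ of $K_{n}/\langle-1\rangle$. Writing each even subset $I$ as its characteristic vector $v_{I}\in\bbF_2^{n}$, the assignment $\pm e_{I}\mapsto v_{I}$ is an isomorphism of $K_{n}/\langle-1\rangle$ onto the even-weight subspace $V_0=\{v:\langle v,\mathbf 1\rangle=0\}$ of $\bbF_2^{n}$, where $\langle\cdot,\cdot\rangle$ is the standard dot product. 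In particular $U$, hence $F/\langle-1\rangle$ and its further quotient $F/(F\cap\langle-1,c\rangle)$, is an elementary abelian $2$-group, which settles the first assertion. Setting $s=\dim_{\bbF_2}U$, I record the rank bookkeeping I will need: when $n$ is odd, $c\notin\Spin(n)$ forces $F\cap\langle-1,c\rangle=\langle-1\rangle$, so $\rank(F/(F\cap\langle-1,c\rangle))=s$; when $n$ is even, $c\in F$ and $\pi(c)=-I_{n}$ corresponds to $\mathbf 1\in U$, so the quotient divides out one further $\bbF_2$-dimension and $\rank(F/(F\cap\langle-1,c\rangle))=s-1$.

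A direct computation of the commutator in the Clifford algebra gives $e_{I}e_{J}=(-1)^{|I\cap J|}e_{J}e_{I}$ for even subsets $I,J$, so $F$ being abelian is exactly the statement that $U$ is totally isotropic for the dot product, $U\subseteq U^{\perp}$. As $\langle\cdot,\cdot\rangle$ is nondegenerate on $\bbF_2^{n}$ we have $\dim U+\dim U^{\perp}=n$, whence $\dim U\le n/2$. For $n$ odd this reads $s\le(n-1)/2$, giving the upper bound directly; for $n$ even it gives $s-1\le n/2-1\le(n-1)/2$, again the upper bound.

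For the lower bound I exploit condition $(\ast)$, which for finite $F$ reads $\dim\frg_0^{F}=0$. The adjoint action of $F$ on $\frg_0=\mathfrak{so}(n)$ factors through $\pi(F)$, and for a diagonal $\tau=\diag(\varepsilon_1,\dots,\varepsilon_{n})$ the entry $X_{ij}$ of an antisymmetric matrix is fixed iff $\varepsilon_i\varepsilon_j=1$. Letting $\chi_i\colon U\to\{\pm1\}$, $\chi_i(v)=(-1)^{v_i}$, be the $i$-th coordinate character, the fixed space is the sum of blocks $\mathfrak{so}(n_\alpha)$ indexed by the distinct characters occurring among $\chi_1,\dots,\chi_{n}$, so $\dim\frg_0^{F}=\sum_\alpha\binom{n_\alpha}{2}$. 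Condition $(\ast)$ forces every $n_\alpha\le1$, i.e.\ the $n$ characters $\chi_1,\dots,\chi_{n}$ are pairwise distinct elements of the dual group of $U$, which has $2^{s}$ elements; hence $n\le 2^{s}$, i.e.\ $s\ge\log_{2}n$. For $n$ odd this is the claim. For $n$ even the extra input is $\pi(c)=-I_{n}$, i.e.\ $\mathbf 1\in U$: every $\chi_i$ satisfies $\chi_i(\mathbf 1)=-1$, so all $n$ distinct characters lie in the nontrivial coset of the index-two subgroup $\{\chi:\chi(\mathbf 1)=1\}$, a set of size $2^{s-1}$. Thus $n\le 2^{s-1}$ and $\rank(F/(F\cap\langle-1,c\rangle))=s-1\ge\log_{2}n$.

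The step needing the most care is the lower bound in the even case: the crude count $n\le 2^{s}$ only yields $s-1\ge\log_{2}n-1$, and recovering the sharp $\log_{2}n$ hinges on noticing that $c\in F$ pins every coordinate character to the value $-1$ at $\mathbf 1$, collapsing the available target set by a factor of two. I also note that the two bounds can be mutually contradictory for small $n$ (e.g.\ $n=3$), which is consistent: there the argument simply shows that no finite abelian $F$ satisfying $(\ast)$ exists. The remaining routine points are the sign $e_{I}e_{J}=(-1)^{|I\cap J|}e_{J}e_{I}$ and the identification $\pi(c)=-I_{n}$, both short computations in the Clifford algebra.
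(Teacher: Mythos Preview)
Your proof is correct and follows essentially the same approach as the paper: both embed $F$ in $K_n$, read the upper bound from the fact that an abelian subgroup is totally isotropic for the commutator pairing (your dot product on $\bbF_2^{n}$ is exactly the paper's bimultiplicative function $m$ after quotienting by $\langle-1\rangle$), and read the lower bound from the block decomposition of $\mathfrak{so}(n)^{\pi(F)}$. Your version is simply more explicit, in particular spelling out the coset argument that gives $n\le 2^{s-1}$ in the even case, which the paper compresses into the single phrase ``at most $2^{r}$ components.''
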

\begin{proof}
By the above remark, we assume that $F\subset K_{n}=\langle e_1e_2,e_1e_3,\dots,e_1e_{n}\rangle$.
Thus $F/(F\cap\langle-1,c\rangle)$ is an elementary abelian 2-group. Let
$r=\rank(F/(F\cap\langle-1,c\rangle))$. Then, $\O(n)^{\pi(F)}$ has a blockwise decomposition of at most
$2^{r}$ components. Hence $\mathfrak{so}(n)^{F}=0$ implies that $2^{r}\geq n$. Thus $r\geq\log_{2}n$.
For $x,y\in K_{n}$, define $m(x,y)=xyx^{-1}y^{-1}$. This gives a bimultiplicative function
\[m: K_{n}\times K_{n}\longrightarrow\{\pm{1}\}.\] It is clear that $\ker m=\Spin(n)\cap\langle-1,c\rangle$
and $\rank(K_{n}/K_{n}\cap\langle-1,c\rangle)=2[\frac{n-1}{2}]$. Since $F$ is an abelian subgroup, one
has $m|_{F\times F}=1$. Therefore
\[\rank(F/(F\cap\langle-1,c\rangle))\leq\frac{1}{2}\rank(K_{n}/\langle-1,c\rangle)=[\frac{n-1}{2}].\]
\end{proof}

\begin{prop}\label{P:Spin-2}
For $2\leq n\leq 6$, there exist no finite abelian subgroups $F$ of $\Spin(n)$ satisfying the
condition $(*)$.
\end{prop}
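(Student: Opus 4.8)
The plan is to read off the result directly from Lemma \ref{L:Spin-1}, which is the real engine. For a \emph{finite} subgroup $F$ one has $\dim F=0$, so the condition $(\ast)$ amounts to $\dim\frg_0^{F}=0$, i.e.\ $C_{G}(F)$ is finite. The point I would exploit is that Lemma \ref{L:Spin-1} already sandwiches the relevant rank between two explicit bounds, and these bounds are simply incompatible when $n$ is small. Since that lemma is stated only for $n\geq 3$, the first thing I would do is dispose of $n=2$ separately: $\Spin(2)\cong\U(1)$ is abelian and one-dimensional, so $C_{G}(F)=G$ for every subgroup $F$ and hence $\dim\frg_0^{F}=1$, which cannot equal $\dim F=0$ for a finite $F$. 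Thus no finite abelian $F$ in $\Spin(2)$ satisfies $(\ast)$.

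For $3\leq n\leq 6$ I would argue by contradiction. Suppose a finite abelian $F\subset\Spin(n)$ satisfies $(\ast)$. Writing $r=\rank(F/(F\cap\langle-1,c\rangle))$, Lemma \ref{L:Spin-1} forces
\[\log_{2}n\leq r\leq\frac{n-1}{2}.\]
In particular $\log_{2}n\leq\frac{n-1}{2}$ would have to hold, and I would check the four values in turn: for $n=3$ we get $\log_{2}3\approx 1.585>1$; for $n=4$, $\log_{2}4=2>\tfrac{3}{2}$; for $n=5$, $\log_{2}5\approx 2.322>2$; and for $n=6$, $\log_{2}6\approx 2.585>\tfrac{5}{2}$. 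Each inequality is violated, so in every case the lower and upper bounds of Lemma \ref{L:Spin-1} contradict one another, and no such $F$ can exist.

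Honestly, there is no serious obstacle here: all of the substantive work has been front-loaded into the proof of Lemma \ref{L:Spin-1}, and the proposition is essentially a numerical corollary of it. The only genuine care needed is the separate treatment of $n=2$ (where the lemma's hypothesis $n\geq 3$ fails) and the observation that the lower bound $\log_{2}n$, being the rank required to kill all of $\frg_0$ via the blockwise decomposition of $\O(n)^{\pi(F)}$, grows too fast relative to the maximal isotropic rank $\bigl\lfloor\frac{n-1}{2}\bigr\rfloor$ allowed by the commutativity of $F$. If a referee wanted the floor made explicit, I would simply replace $\frac{n-1}{2}$ by $\bigl\lfloor\frac{n-1}{2}\bigr\rfloor$ in the display above, which only sharpens the contradiction.
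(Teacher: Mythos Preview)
Your proof is correct and follows essentially the same route as the paper: both invoke Lemma \ref{L:Spin-1} to obtain $\log_{2}n\leq r\leq\frac{n-1}{2}$ and then observe that this double inequality is infeasible for small $n$. The only cosmetic difference is that the paper rewrites the inequality as $2^{[\frac{n-1}{2}]}\geq n$ and splits into the parity cases $n=2m$, $n=2m-1$ to conclude $n\geq 8$ or $n\geq 7$, whereas you check $n=3,4,5,6$ one at a time; your explicit handling of $n=2$ is in fact more careful than the paper's, which silently starts at $n\geq 3$.
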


\begin{proof}
Let $n\geq 3$ and $F\subset\Spin(n)$ be a finite abelian subgroup satisfying the condition $(*)$. Let
$r=\rank(F/(F\cap\langle-1,c\rangle))$. By Lemma \ref{L:Spin-1}, $\log_{2}n\leq r\leq\frac{n-1}{2}$. Hence
$2^{[\frac{n-1}{2}]}\geq n$. If $n=2m$ is even, then $2^{m-1}\geq 2m$ and hence $m\geq 4$, $n\geq 8$.
If $n=2m-1$ is odd, then $2^{m-1}\geq 2m-1$ and hence $m\geq 4$, $n\geq 7$.
\end{proof}

Let $$F_8=\langle-1,c,e_1e_2e_3e_4,e_1e_2e_5e_6,e_1e_3e_5e_7\rangle,$$ $$F_7=\langle-1,e_1e_2e_3e_4,
e_1e_2e_5e_6,e_1e_3e_5e_7\rangle.$$ Then, $F_8$ is a maximal finite abelian subgroup of $\Spin(8)$ and
$F_7$ is a maximal finite abelian subgroup of $\Spin(7)$.

\begin{prop}\label{P:Spin-3}
Any finite abelian subgroup $F$ of $\Spin(8)$ satisfying the condition $(*)$ and with $-1,c\in F$ is
conjugate to $F_8$. Any finite abelian subgroup $F$ of $\Spin(7)$ satisfying the condition $(*)$ and
with $-1\in F$ is conjugate to $F_7$. There exist no finite abelian subgroups of $\Spin(10)$ satisfying
the condition $(*)$.
\end{prop}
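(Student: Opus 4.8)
The plan is to work inside $K_n = \langle e_1e_2, \dots, e_1e_n\rangle$ and organize everything around the sets $X_k$ of even index-subsets $I$ with $e_I \in F$. The key structural facts I would establish first are commutation rules in the Clifford algebra: for index sets $I, J$ of sizes $|I|, |J|$ with $|I\cap J| = \ell$, one has $e_I e_J = (-1)^{|I||J| - \ell} e_J e_I$, so the condition $m|_{F\times F}=1$ from Lemma \ref{L:Spin-1} forces $|I||J| - |I\cap J|$ to be even for all $e_I, e_J \in F$. Since all elements of $F$ have $|I|$ even (as $X_2 = \emptyset$ and more generally $F/(F\cap\langle -1,c\rangle)$ is 2-elementary sitting in even-length products), the commutativity condition reduces to $|I \cap J| \equiv 0 \pmod 2$ for all pairs. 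This parity constraint on pairwise intersections is the combinatorial engine of the whole proof.

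Next I would pin down $r = \rank(F/(F\cap\langle -1,c\rangle))$ using the two-sided bound from Lemma \ref{L:Spin-1}. For $n=8$: $\log_2 8 = 3 \le r \le \lfloor 7/2\rfloor = 3$, so $r = 3$ exactly; similarly for $n=7$, $\log_2 7 \le r \le 3$ gives $r = 3$. Thus in both cases the quotient is $(\bbZ/2)^3$, generated by three elements $e_{I_1}, e_{I_2}, e_{I_3}$ modulo $\langle -1, c\rangle$. I would then show each $|I_j|$ must be $4$: the length-$4$ elements are forced because $X_2 = \emptyset$ kills length $2$, while lengths $6$ or $8$ can be normalized to length $4$ using multiplication by $c$ (which replaces $I$ by its complement and changes parity of length by $n$) together with the fact that for $\Spin(7)$ and $\Spin(8)$ the available even lengths are $\{4\}$ up to complementation. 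Concretely, $\dim \frg_0^F = 0$ forces the fixed-space decomposition of $\frso(n)$ under the three commuting involutions $\pi(e_{I_j})$ to have no invariants, and a length-$4$ triple with pairwise intersections of size $2$ (even, as required) is exactly what cuts $\bbR^8$ into eight $\pm$-eigenline blocks with trivial $\frso$-fixed space.

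The heart of the argument is a conjugacy normalization: any three commuting length-$4$ elements whose pairwise intersections have even cardinality can be moved by an element of $\Spin(8)$ (permuting the coordinates $e_1,\dots,e_8$, i.e.\ acting through the Weyl group of the diagonal torus together with sign changes) into the standard configuration $e_1e_2e_3e_4,\ e_1e_2e_5e_6,\ e_1e_3e_5e_7$ defining $F_8$. Here I would argue that the Venn-diagram pattern of three $4$-subsets of $\{1,\dots,8\}$ with all pairwise intersections of size $2$ and triple intersection of size $1$ is unique up to the symmetric group $S_8$; this is a finite check. For $\Spin(7)$ one restricts to $\{1,\dots,7\}$, dropping $c$, and the same three generators (now without $c$) give $F_7$; the condition $(\ast)$ together with $r=3$ forces exactly this. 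The maximality claims for $F_8, F_7$ follow since $r$ has hit its upper bound $\lfloor (n-1)/2\rfloor$, leaving no room to enlarge.

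For the $\Spin(10)$ nonexistence claim I would again use Lemma \ref{L:Spin-1}: $\log_2 10 \le r \le \lfloor 9/2\rfloor = 4$, so $r \in \{4\}$ since $\log_2 10 > 3$. With $r = 4$ I would show the parity constraint $|I\cap J|$ even on four generating even-length subsets of $\{1,\dots,10\}$, combined with the requirement that the four commuting involutions cut $\bbR^{10}$ into blocks leaving $\frso(10)^F = 0$, is unsatisfiable: four involutions partition $\bbR^{10}$ into at most $2^4 = 16$ sign-blocks, but to kill all of $\frso(10)$ every block must be $1$-dimensional and the block-dimension vector must be a $\{0,1\}$-vector of length $16$ summing to $10$, realized by characters of $(\bbZ/2)^4$ with the even-intersection constraint — a counting/parity contradiction. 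The main obstacle I anticipate is precisely this last step: verifying rigorously that no admissible system of even-intersection $4$-subsets of a $10$-element set produces a multiplicity-free, fixed-point-free decomposition, which requires either a careful character-theoretic count over $(\bbZ/2)^4$ or an exhaustive combinatorial elimination rather than a one-line dimension bound.
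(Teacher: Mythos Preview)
Your approach is genuinely different from the paper's and, with two fixable gaps, correct. The paper does not argue combinatorially inside $K_n$; instead, after locating a single length-$4$ element $e_1e_2e_3e_4\in F$ it passes to the centralizer and invokes the exceptional isomorphisms $\Spin(4)\cong\Sp(1)\times\Sp(1)$ and $\Spin(6)\cong\SU(4)$ to rewrite $\Spin(n)^{e_1e_2e_3e_4}$ as a quotient of a product of matrix groups, then quotes the classification of finite abelian subgroups of $\Sp(1)/\langle-1\rangle$ and $\SU(4)/\langle-I\rangle$ satisfying $(\ast)$ from \cite{Yu2}. For $n=10$ the contradiction is that the bimultiplicative forms on the $\Sp(1)/\langle-1\rangle$ and $\SU(4)/\langle-I\rangle$ projections must have equal rank yet are forced to have ranks $2$ and $4$ respectively. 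The paper's route is more structural, reuses machinery already built, and is the template for the $n=12,14$ cases; yours is self-contained and avoids the external reference, at the cost of a bespoke combinatorial count that does not obviously scale.

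Two points in your outline need repair. First, for $n=7$ you cannot normalize length $6$ to length $4$ by multiplying by $c$, since $c_7\notin\Spin(7)$; you must argue separately, e.g.\ if $e_I,e_J\in F$ with $|I|=6$, $|J|=4$ then $|I\cap J|$ even together with $|I\cap J|\ge |I|+|J|-7=3$ forces $J\subset I$, whence $e_Ie_J$ has length $2$, contradicting $X_2=\emptyset$. Second, for $n=10$ the obstacle you anticipate dissolves into a short inclusion--exclusion: after choosing all four generators with $|I_j|=4$ (possible via multiplication by $c$), the abelian constraint combined with $|I_j\triangle I_k|\in\{4,6\}$ forces $|I_j\cap I_k|=2$; then $|I_j\triangle I_k\triangle I_l|\in\{4,6\}$ forces each triple intersection $|I_j\cap I_k\cap I_l|=1$; and finally one computes $|I_1\triangle I_2\triangle I_3\triangle I_4|=8-8s$ with $s=|I_1\cap I_2\cap I_3\cap I_4|$, which lies in $\{4,6\}$ for no integer $s$.
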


\begin{proof}
For $n=8$, without loss of generality we assume that $$F\subset\langle e_1e_2,e_1e_3,\dots,e_1e_8\rangle.$$
Since $e_{i}e_{j}\not\in F$ for any $i\neq j$, besides $\pm{1},\pm{c}$, any element of $F$ is of the form
$\pm{}e_{I}$ for some $I\subset\{1,2,\dots,8\}$ with $|I|=4$. We may and do assume that $e_1e_2e_3e_4\in F$.
Then \[F\subset\Spin(8)^{e_1e_2e_3e_4}\cong(\Sp(1)\times\Sp(1)\times\Sp(1)\times\Sp(1))/\langle(-1,-1,-1,-1)
\rangle.\] Here we use $\Spin(4)\cong\Sp(1)\times\Sp(1)$. Any finite abelian subgroup of $\Sp(1)/\langle-1
\rangle$ satisfying the condition $(*)$ is conjugate to $\langle[\textbf{i}],[\textbf{j}]\rangle$. Hence
\[F\sim\langle[(-1,1,1,1)],[(1,-1,1,1)],[(1,1,-1,1)],[(\textbf{i},\textbf{i},\textbf{i},\textbf{i})],
[(\textbf{j},\textbf{j},\textbf{j},\textbf{j})]\rangle.\] This shows the uniqueness of the conjugacy class
when $n=8$. The proof for the $n=7$ case is similar as that for the $n=8$ case. For $n=10$, without loss of
generality we assume that $F\subset\langle e_1e_2,e_1e_3,\dots,e_1e_{10}\rangle$ and $c\in F$. Since
$e_{i}e_{j}\notin F$ for any $i\neq j$, besides $\pm{1},\pm{c}$, any element of $F$ is of the form $\pm{}e_{I}$
with $|I|=4$ or $6$. We may and do assume that $e_1e_2e_3e_4\in F$. Then \[F\subset\Spin(10)^{e_1e_2e_3e_4}
\cong(\Sp(1)\times\Sp(1)\times\SU(4))/\langle(-1,-1,-I)\rangle.\] Here we use $\Spin(4)\cong\Sp(1)\times\Sp(1)$
and $\Spin(6)\cong\SU(4)$. Let $p_1$, $p_2$, $p_3$ be the projections of $F$ to three components
$\Sp(1)/\langle-1\rangle$, $\Sp(1)/\langle-1\rangle$, $\SU(4)/\langle-I\rangle$ and $F'_1$, $F'_2$, $F'_3$ be
the images, which are elementary abelian 2-subgroups by \cite{Yu2}, Proposition 2.1. We have bimultiplicative
functions $m_1$, $m_2$, $m_3$ on $F'_1$, $F'_2$, $F'_3$ taking values in $\{\pm{1}\}$. By the definition of
$m_{i}$ in \cite{Yu2}, we have  $m_{i}(p_{i}(x),p_{i}(y))=m_{j}(p_{j}(x),p_{j}(y))$ for any $x,y\in F$ and
$i,j\in\{1,2,3\}$. Hence $\rank(F'_{i}/\ker m_{i})=\rank(F'_{j}/\ker m_{j})$. It follows from \cite{Yu2},
Proposition 2.1 that $\ker m_1=\ker m_3=1$, $F'_1\cong(C_2)^{2}$ and $F'_3\cong (C_2)^{4}$. Hence
$\rank(F'_1/\ker m_1)\neq\rank(F'_3/\ker m_3)$, which is a contradiction. Thus $\Spin(10)$ has no finite
abelian subgroups satisfying the condition $(*)$.
\end{proof}


Since every element in $F_{7}-\{1\}$ is conjugate to $e_1e_2e_3e_4$, one can show that $W(F_7)\cong
\mathbb{F}_2^{3}\rtimes\GL(3,\mathbb{F}_2)$. Similarly every element in $F_{8}-\langle-1,c\rangle$ is
conjugate to $e_1e_2e_3e_4$ and hence $W(F_8)\cong\Hom(\mathbb{F}_2^{3},\mathbb{F}_2^{2})\rtimes
\GL(3,\mathbb{F}_2)$.

\smallskip

Let \[F_{12}=\langle -1,c,e_1e_2e_3e_4,e_1e_2e_5e_6,e_1e_2e_7e_8,e_1e_2e_{9}e_{10},e_1e_3e_5e_7e_9e_{11}
\rangle,\] \[F_{14}=\langle-1,c,e_1e_2e_3e_4,e_1e_2e_5e_6,e_1e_3e_5e_7,e_8e_9e_{10}e_{11},
e_8e_9e_{12}e_{13},e_8e_{10}e_{12}e_{14}\rangle.\] Then, $F_{12}$ is a maximal finite abelian subgroup of
$\Spin(12)$ and $F_{14}$ is a maximal finite abelian subgroup of $\Spin(14)$.

\begin{prop}\label{P:Spin-4}
Any finite abelian subgroup $F$ of $\Spin(12)$ satisfying the condition $(*)$ and with $-1,c\in F$ is
conjugate to $F_{12}$. Any finite abelian subgroup $F$ of $\Spin(14)$ satisfying the condition $(*)$ and
with $-1,c\in F$ is conjugate to $F_{14}$.
\end{prop}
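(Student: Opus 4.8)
The plan is to reduce the classification for $\Spin(12)$ and $\Spin(14)$ to the already-settled cases of $\Spin(8)$, $\Spin(10)$ and small-rank matrix groups, exactly following the strategy used in Proposition \ref{P:Spin-3}. As before I assume $F\subset K_n=\langle e_1e_2,e_1e_3,\dots,e_1e_n\rangle$, so that every non-central element of $F$ has the form $\pm e_I$ with $|I|$ even, $|I|\neq 2$ (since $X_2=\emptyset$). The first step is to pin down which cardinalities $|I|$ can actually occur. For an element $e_I$ with $|I|=k$, the centralizer is $\Spin(n)^{e_I}\cong(\Spin(k)\times\Spin(n-k))/\langle(-1,-1)\rangle$, and for $F$ to embed in it with condition $(\ast)$ both factors must themselves receive finite abelian subgroups satisfying $(\ast)$ after projecting. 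By Proposition \ref{P:Spin-2} and Proposition \ref{P:Spin-3}, neither $\Spin(k)$ with $k\in\{2,3,4,5,6,10\}$ nor $\Spin(n-k)$ in those ranges can carry such a subgroup; this forces $k$ and $n-k$ into the admissible list $\{4,7,8,12,14\}$ (together with the trivial factors). For $n=12$ this leaves $k=4$ essentially forced, and for $n=14$ it leaves $k=4$ (with the complementary factor of rank $10$ excluded, so one must split $14=7+7$ or $14=4+\text{rest}$ carefully).

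\emph{Existence.} I would first verify that $F_{12}$ and $F_{14}$ do satisfy $(\ast)$ and are maximal. For this it suffices to check $\mathfrak{so}(n)^F=0$, which amounts to showing the commuting eigenspaces cut out by $\pi(F)$ are all $1$-dimensional; since both $F_{12}$ and $F_{14}$ have rank $5$ and $6$ respectively modulo $\langle -1,c\rangle$, one has $2^5=32\geq 12$ and $2^6=64\geq 14$, and a direct check of the $\pm1$ sign patterns on $e_1,\dots,e_n$ confirms that no coordinate subspace survives, giving $(\ast)$.

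\emph{Uniqueness.} Assume $F$ satisfies $(\ast)$ with $-1,c\in F$ and, after conjugation, $e_1e_2e_3e_4\in F$. Then $F\subset\Spin(n)^{e_1e_2e_3e_4}\cong(\Sp(1)\times\Sp(1)\times\Spin(n-4))/\langle(-1,-1,-1)\rangle$, using $\Spin(4)\cong\Sp(1)\times\Sp(1)$. For $n=12$ the last factor is $\Spin(8)$, for $n=14$ it is $\Spin(10)$. I would then run the bimultiplicative-form argument of Proposition \ref{P:Spin-3}: let $p_1,p_2,p_3$ be the projections to the three factors (modulo centers), with images $F'_i$ elementary abelian $2$-groups carrying forms $m_i$ valued in $\{\pm1\}$, and use $m_i(p_i(x),p_i(y))=m_j(p_j(x),p_j(y))$ so that the ranks $\rank(F'_i/\ker m_i)$ agree across all factors. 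For $n=14$ the $\Spin(10)$ factor must receive a subgroup satisfying $(\ast)$, which Proposition \ref{P:Spin-3} forbids unless its projection degenerates; the rank-matching constraint then forces the correct splitting. For $n=12$ the $\Spin(8)$ factor is classified by Proposition \ref{P:Spin-3} as $F_8$, and combining the uniquely determined pieces $\langle[\mathbf{i}],[\mathbf{j}]\rangle$ on each $\Sp(1)/\langle-1\rangle$ with $F_8$ on $\Spin(8)$, subject to the compatibility of the forms $m_i$, pins down $F$ up to conjugacy as $F_{12}$.

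\emph{The main obstacle} I anticipate is the combinatorial bookkeeping of \emph{which} complementary splittings are compatible, particularly for $n=14$: one must show that the only way to fill all three factors consistently is the $7+7$-type pattern encoded by $F_{14}$, ruling out spurious configurations where, say, a rank-$10$ piece tries to appear. The delicate point is that the rank-matching identity $\rank(F'_i/\ker m_i)=\rank(F'_j/\ker m_j)$ must be used in tandem with the nonexistence result for $\Spin(10)$ to eliminate every alternative; verifying that $F_{14}$ is the unique survivor, rather than merely a valid candidate, is where the argument requires the most care.
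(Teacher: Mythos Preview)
Your reduction strategy has a genuine gap at its core. When you centralize a single element $e_1e_2e_3e_4$ and land in $(\Sp(1)\times\Sp(1)\times\Spin(n-4))/\langle(-1,-1,-1)\rangle$, the projection $p_3$ of $F$ to the third factor lands in $\Spin(n-4)/\langle-1\rangle\cong\SO(n-4)$, \emph{not} in $\Spin(n-4)$. Proposition~\ref{P:Spin-3} therefore does not apply: for $n=12$ the image sits in $\SO(8)$, which has many conjugacy classes of finite abelian subgroups with $\mathfrak{so}(8)^{F'_3}=0$ (any rank-$\geq 3$ diagonal subgroup works), so the assertion ``the $\Spin(8)$ factor is classified as $F_8$'' is false as stated. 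For $n=14$ the same issue is fatal: $\SO(10)$ certainly has diagonalizable finite abelian subgroups satisfying $(\ast)$, so nothing is excluded. The rank-matching identity $\rank(F'_i/\ker m_i)=\rank(F'_j/\ker m_j)$ only tells you that $\rank(F'_3/\ker m_3)=2$, which is a constraint but far from a classification. Relatedly, your opening paragraph is self-contradictory (you place $4$ in both the forbidden list $\{2,3,4,5,6,10\}$ and the admissible list), precisely because the premise ``both $\Spin(k)$ and $\Spin(n-k)$ must themselves carry subgroups satisfying $(\ast)$'' is wrong: the projections go to quotients.

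The paper avoids this by centralizing \emph{more}: for $n=12$ it uses either an element of weight $6$ (centralizer $(\SU(4)\times\SU(4))/\langle(-I,-I)\rangle$ via $\Spin(6)\cong\SU(4)$) or \emph{two disjoint} weight-$4$ elements (centralizer a quotient of $\Sp(1)^6$); for $n=14$ it uses two disjoint weight-$4$ elements (centralizer a quotient of $\Sp(1)^4\times\SU(4)$). In each case the centralizer is a product of genuinely small matrix groups where \cite{Yu2}, Proposition~2.1 gives the classification directly, and rank-matching then pins $F$ down uniquely. The paper must then supply a separate combinatorial argument to show such elements always exist: for $n=12$, that one cannot have all weight-$4$ elements pairwise meeting in exactly two indices with $\rank(F/\langle-1,c\rangle)\geq 4$; for $n=14$, a more elaborate analysis of the possible weight profiles. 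Your outline omits this step entirely.
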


\begin{proof}
For $n=12$, without loss of generality we assume that $$F\subset\langle e_1e_2,e_1e_3,\dots,e_1e_{12}
\rangle.$$ Since $e_{i}e_{j}\notin F$ for any $i\neq j$, besides $\pm{1},\pm{c}$, any element of $F$ is of
the form $\pm{}e_{I}$ with $|I|=4$, $6$ or $8$. If $F$ contains an element of the form $e_{I}$ with $|I|=6$,
we may and do assume that $e_1e_2e_3e_4e_5e_6\in F$. Then \[F\subset\Spin(12)^{e_1e_2e_3e_4e_5e_6}\cong
(\SU(4)\times\SU(4))/\langle(-I,-I)\rangle\] and $[(iI,I)],[(I,iI)]\in F$. Here we use $\Spin(6)\cong\SU(4)$.
By \cite{Yu2}, Proposition 2.1, the image of the projection of $F$ to each component $\SU(4)/\langle-I\rangle$
is conjugate to $\langle [iI],[E_1],[E_2],[E_3],[E_4]\rangle$, where $E_1=I_{2,2}$, $E_2=J'_2$,
$E_3=\diag\{I_{1,1},I_{1,1}\}$, $E_4=\diag\{J'_{1},J'_{1}\}$. Hence \[F\sim\langle[(iI,I)],[(I,iI)],
[(E_1,E_1)],[(E_2,E_2)],[(E_3,E_3)],[(E_4,E_4)]\rangle.\] Therefore the conjugacy class of $F$ is unique in
this case. Moreover $F_{12}$ is in this conjugacy class since it contains an element of the form $e_{I}$ with
$|I|=6$. If $F$ contains two elements of the form $e_{I_1},e_{I_2}$ with $|I_1|=|I_2|=4$ and $I_1\cap I_2=
\emptyset$, we may and do assume that $e_1e_2e_3e_4,e_5e_6e_7e_8\in F$. Then, \begin{eqnarray*}&&F\\&\subset&
\Spin(12)^{e_1e_2e_3e_4,e_5e_6e_7e_8}\\&\cong&(\Sp(1)^{2}\times\Sp(1)^{2}\times\Sp(1)^{2})/\langle(-1,-1,-1,
-1,1,1),(1,1,-1,-1,-1,-1)\rangle\end{eqnarray*} and $[(\pm{1},\pm{1},\pm{1},\pm{1},\pm{1},\pm{1})]\in F$.
Here we use $\Spin(4)\cong\Sp(1)\times\Sp(1)$. One can show that the image of the projection of $F$ to each
component $\Sp(1)^{2}/\langle(-1,-1)\rangle$ is conjugate to $\langle[(-1,1)],[(\textbf{i},\textbf{i})],
[(\textbf{j},\textbf{j})]\rangle$. Hence \begin{eqnarray*}F\sim\langle[(\pm{1},\pm{1},\pm{1},\pm{1},\pm{1},
\pm{1})],[(\textbf{i},\textbf{i},\textbf{i},\textbf{i},1,1)],[(\textbf{j},\textbf{j},\textbf{j},\textbf{j},
\textbf{j},\textbf{j})],[(1,1,\textbf{i},\textbf{i},\textbf{i},\textbf{i})]\rangle.\end{eqnarray*} Therefore
the conjugacy class of $F$ is unique in this case. Moreover $F_{12}$ is in this conjugacy class since it
contains two elements of the form $e_{I_1},e_{I_2}$ with $|I_1|=|I_2|=4$ and $I_1\cap I_2=\emptyset$.
Suppose the above two cases do not happen. Then any element of $F$ besides $\pm{1},\pm{c}$ is of the form
$\pm{}e_{I}$ with $|I|=4$ or $8$; and for any two $e_{I_1},e_{I_2}\in F$ ($I_1\neq I_2$) with $|I_1|=|I_2|=4$,
one has $|I_1\cap I_2|=2$. By Lemma \ref{L:Spin-1}, $\rank(F/\langle-1,c\rangle)\geq 4$. Without loss of
generality we assume that $e_1e_2e_3e_4,e_1e_2e_5e_6,e_1e_3e_5e_7\in F$. One can show that there exist no
subset $I$ of $\{1,2,\dots,12\}$ with cardinality 4 and the intersection of $I$ with each of $\{1,2,3,4\}$,
$\{1,2,5,6\}$, $\{3,4,5,6\}$, $\{1,3,5,7\}$, $\{2,4,5,7\}$, $\{2,3,6,7\}$, $\{1,4,6,7\}$ has cardinality 2.

For $n=14$, without loss of generality we assume that $$F\subset\langle e_1e_2,e_1e_3,\dots,e_1e_{14}
\rangle.$$ Since $e_{i}e_{j}\notin F$ for any $i\neq j$, besides $\pm{1},\pm{c}$, any element of $F$ is of
the form $\pm{}e_{I}$ with $|I|=4$, $6$, $8$ or $10$. If $F$ contains two elements of the form $e_{I_1},
e_{I_2}$ with $|I_1|=|I_2|=4$ and $I_1\cap I_2=\emptyset$, we may and do assume that $e_1e_2e_3e_4,
e_5e_6e_7e_8\in F$. Then \begin{eqnarray*}&&F\\&\subset&\Spin(14)^{e_1e_2e_3e_4,e_5e_6e_7e_8}\\&\cong&
(\Sp(1)^{2}\times\Sp(1)^{2}\times\SU(4))/\langle(-1,-1,1,1,-I),(1,1,-1,-1,-I)\rangle\end{eqnarray*} and
$[(\pm{1},\pm{1},\pm{1},\pm{1},I)],[(1,1,1,1,iI)]\in F$. By \cite{Yu2}, Proposition 2.1, the image of the
projection of $F$ to each of the first two components $\Sp(1)^{2}/\langle(-1,-1)\rangle$ is conjugate to
$\langle[(-1,1)],[(\textbf{i},\textbf{i})],[(\textbf{j},\textbf{j})]\rangle$ and the image of the projection
of $F$ to the third component $\SU(4)/\langle-I\rangle$ is conjugate to $\langle[iI],[E_1],[E_2],[E_3],[E_4]
\rangle$. Then, one can show that \begin{eqnarray*}F\sim&&\langle[(\pm{1},\pm{1},\pm{1},\pm{1},I)],
[(1,1,1,1,iI)],[(\textbf{i},\textbf{i},1,1,E_1)],[(\textbf{j},\textbf{j},1,1,E_2)],\\&&
[(1,1,\textbf{i},\textbf{i},E_3)],[(1,1,\textbf{j},\textbf{j},E_4)]\rangle.\end{eqnarray*} Hence the conjugacy
class of $F$ is unique in this case. Moreover $F_{14}$ is in this conjugacy class since it contains two
elements of the form $e_{I_1},e_{I_2}$ with $|I_1|=|I_2|=4$ and $I_1\cap I_2=\emptyset$. Suppose the above
case does not happen. Then, for any $e_{I_1},e_{I_2}\in F$ ($I_1\neq I_2$) with $|I_1|=|I_2|=4$, one has
$|I_1\cap I_2|=2$. Let $F'=\{\pm{}e_{I}\in F: |I|=4\}\cup\{\pm{1}\}$. Then $F'$ is a subgroup of $F$.
Moreover, there exists a subgroup $F''$ of $F$ such that: $-1\in F''$; for any $\pm{e_{I}}\in F''$, $|I|=8$;
and \[F/\langle-1,c\rangle=(\langle F',c\rangle/\langle-1,c\rangle)\times (\langle F'',c\rangle/\langle-1,c
\rangle)\] is a direct product. Hence for any $e_{I_1}\in F',e_{I_2}\in F''$, $|I_1\cap I_2|=2$; and for
any $e_{I_1},e_{I_2}\in F''$ ($I_1\neq I_2$), $|I_1\cap I_2|=4$. Let $r_1=\rank(F'/\langle -1\rangle)$,
$r_2=\rank F''/\langle-1\rangle$ and $r=\rank(F/\langle-1,c\rangle)$. Then $r=r_1+r_2$. By Lemma
\ref{L:Spin-1}, $r\geq 4$. Since for any $e_{I_1},e_{I_2}\in F''$ ($I_1\neq I_2$), $|I_1\cap I_2|=4$, one
has $r_2\leq 3$. Since for any $e_{I_1},e_{I_2}\in F'$ ($I_1\neq I_2$), $|I_1\cap I_2|=2$, one has
$r_1\leq 3$. If $r_1=3$, we may and do assume that $F'=\langle -1,e_1e_2e_3e_4,e_1e_2e_5e_6,e_1e_3e_5e_7
\rangle$. Choose an $e_{I}\in F''$ with $|I|=8$, one can show that $|I\cap I_1|=2$ for any
$e_{I_1}\in F'$ can not hold. If $r_1=2$, we may and do assume that $F'=\langle-1,e_1e_2e_3e_4,
e_1e_2e_5e_6\rangle$. Any element of $F''$ can be chosen of the form $\pm{}e_1e_3e_{I}$ with
$I\subset\{5,6,7,8,9,10,11,12,13,14\}$ and $|I\cap\{5,6\}|=1$. Then, $r_2\leq 1$ and hence
$\mathfrak{so}(14)^{F}\neq 0$ in this case. If $r_1=1$, then $r_2=3$ as $r\geq 4$. In this case one can
show that $\mathfrak{so}(14)^{F}\neq 0$ as well.
\end{proof}

For $F_{12}$, denote by \[K=\langle -1,c,e_1e_2e_3e_4,e_1e_2e_5e_6,e_1e_2e_7e_8,
e_1e_2e_{9}e_{10}\rangle.\] Since $\mathfrak{so}(12)^{K}=\oplus_{1\leq i\leq 6}\mathfrak{so}(2)$, one has
$W(K)\cong\mathbb{F}_2^{4}\rtimes S_6$, where $\mathbb{F}_2^{4}=A_1/A_2$,
$A_1=\{(a_1,a_2,a_3,a_4,a_5,a_6)\in\mathbb{F}_2^{6}:a_1+a_2+a_3+a_4=a_5+a_6=0\}$,
$A_2=\mathbb{F}_2\cdot(1,1,1,1,1,1)\subset\mathbb{F}_2^{6}$. Here $S_6$ is generated by the conjugation
action of elements like $\frac{1+e_{2i+1}e_{2i+3}}{\sqrt{2}}\frac{1+e_{2i+2}e_{2i+4}}{\sqrt{2}}$,
$0\leq i\leq 4$, and $\mathbb{F}_2^{4}$ is generated by the conjugation action of elements like
$e_{2i+1}e_{2i+3}$, $0\leq i\leq 4$. It is clear that $K$ is invariant under $W(F_{12})$ and hence there
is a homomorphism $p: W(F_{12})\rightarrow W(K)$, which is apparently a surjective map. One can show that
$\ker p=\Hom(F_{12}/K,K)$, generated by the conjugation action of elements like $e_1e_2$,
$\frac{1+e_{2i+1}e_{2i+2}}{\sqrt{2}}\frac{1+e_{2i+3}e_{2i+4}}{\sqrt{2}}$, $0\leq i\leq 4$. Therefore we
get $$W(F_{12})\cong\mathbb{F}_{2}^{6}\rtimes(\mathbb{F}_2^{4}\rtimes S_6).$$ For $F_{14}$, denote by
$$A=\langle-1,e_1e_2e_3e_4,e_1e_2e_5e_6,e_1e_3e_5e_7\rangle,$$
$$B=\langle-1,e_8e_9e_{10}e_{11},e_8e_9e_{12}e_{13},e_8e_{10}e_{12}e_{14}\rangle.$$ Then, $\{A,B\}$ is
invariant under $W(F_{14})$. It is clear that $$W(F_{14})=(W(A)\times W(B))\rtimes S_2,$$ where
$W(A)\cong W(B)\cong\mathbb{F}_2^{3}\rtimes\GL(3,\mathbb{F}_2)$.

\smallskip

Given $n\geq 3$, let $k_{n}$ be the number of conjugacy classes of finite abelian subgroups $F$ of $\Spin(n)$
satisfying the condition $(*)$, with $-1\in F$ and $c_{n}\in F$ if $2|n$.

\begin{prop}\label{P:Spin-5}
For any $n>1$ except $n=2$, $3$, $4$, $5$, $6$ or $10$, $\Spin(n)$ has a finite abelian subgroup satisfying the
condition $(*)$.
\end{prop}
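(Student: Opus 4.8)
The plan is to reduce condition $(\ast)$ to a combinatorial statement about $\pi(F)\subset\SO(n)$ and then to generate all admissible $n$ from a handful of base cases using two simple operations. First I would record the reformulation: writing $\pi(F)\subset\SO(n)$ as a diagonal elementary abelian $2$-group and decomposing $\bbR^{n}=\bigoplus_{\chi}V_{\chi}$ into its common eigenspaces, one has $\mathfrak{so}(n)^{F}=\mathfrak{so}(n)^{\pi(F)}=\bigoplus_{\chi}\mathfrak{so}(V_{\chi})$, so that $(\ast)$ holds iff the coordinate characters $\chi_1,\dots,\chi_n$ are pairwise distinct (equivalently each $\dim V_{\chi}\le 1$). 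I would also record the bookkeeping fact that if $c_n\in F$ (so $2\mid n$) then $\pi(c_n)=-I$ fixes no nonzero vector, so such an $F$ automatically satisfies $(\bbR^{n})^{F}=0$; call this property \emph{no fixed vector}.

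Next I would prove two construction lemmas. \emph{Direct sum:} if $F_a\subset\Spin(a)$ and $F_b\subset\Spin(b)$ both satisfy $(\ast)$ and at least one has no fixed vector, then the image of $F_a\times F_b$ inside $\Spin(a+b)^{c_a}\cong(\Spin(a)\times\Spin(b))/\langle(-1,-1)\rangle$ satisfies $(\ast)$; this is immediate from $\mathfrak{so}(a+b)^{F}=\mathfrak{so}(a)^{F_a}\oplus\mathfrak{so}(b)^{F_b}\oplus\big((\bbR^{a})^{F_a}\otimes(\bbR^{b})^{F_b}\big)$, all three summands vanishing, and the result again has no fixed vector when both factors do. \emph{Add a coordinate:} if $F\subset\Spin(n)$ satisfies $(\ast)$ and has no fixed vector, then $F$ viewed in $\Spin(n+1)$ via the standard inclusion still satisfies $(\ast)$, since $\mathfrak{so}(n+1)^{F}=\mathfrak{so}(n)^{F}\oplus(\bbR^{n})^{F}=0$.

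With these in hand I would run a numerical-semigroup argument. Let $N$ be the set of $n$ for which $\Spin(n)$ has a subgroup satisfying $(\ast)$ with no fixed vector, and $A$ the set of $n$ admitting any subgroup satisfying $(\ast)$. The direct-sum lemma shows $N$ is closed under addition, and the examples $F_7,F_8,F_{12}$ (each readily checked to have no fixed vector, the even ones because they contain $c$) give $7,8,12\in N$; together with an example for $\Spin(11)$ this yields $N\supseteq\langle 7,8,11,12\rangle$, whose only gaps $\ge 7$ are $9,10,13,17$. The add-a-coordinate lemma gives $n\in N\Rightarrow n+1\in A$, and a direct check then shows $N\cup(N+1)=\{\,n\ge 7\,\}\setminus\{10\}$, which is exactly the assertion.

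The one genuinely nontrivial input — and the main obstacle — is the base case $n=11$. It cannot be produced by the two operations: every splitting $11=a+b$ has a summand $\le 6$, for which Proposition \ref{P:Spin-2} forbids a good subgroup, and the add-a-coordinate route would require $10\in A$, which Proposition \ref{P:Spin-3} rules out. One must therefore exhibit $F_{11}$ by hand; concretely this amounts to producing a self-orthogonal binary code of length $11$ (self-orthogonality encodes commutativity in $\Spin(11)$, since $e_Ie_J=(-1)^{|I\cap J|}e_Je_I$ when $|I|,|J|$ are even) all of whose coordinate characters are distinct and nonzero. A short argument via isotropic subspaces shows no such code of dimension $\le 4$ exists, while Lemma \ref{L:Spin-1} bounds the dimension by $\frac{n-1}{2}=5$, so the dimension must be exactly $5$; an explicit such code is obtained by adjoining three coordinates and a fifth generator to the length-$8$ code underlying $F_8$, after which one checks directly that the eleven columns are distinct and nonzero and that the five generators pairwise intersect in even cardinality. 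Once $11\in N$ (whence also $18=11+7\in N$) the count closes and the proposition follows.
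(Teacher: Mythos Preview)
Your argument is correct and takes a genuinely different route from the paper's. The paper first proves the inequality $k_{2n-1}\ge k_{2n-2}+k_{2n}$ (where $k_m$ counts conjugacy classes of good subgroups), thereby reducing all odd $n$ to the neighboring even cases; it then handles even $n\ge 16$ by writing down three explicit infinite families $F_{4m}$, $F_{14+8m}$, $F_{18+8m}$. Your approach instead establishes two very clean construction lemmas (direct sum and add-a-coordinate), feeds in four base cases $7,8,11,12$, and closes with a numerical-semigroup count. Your lemmas are more elementary than the paper's inequality, whose injectivity step is relegated to a footnote; in exchange, you must produce $F_{11}$ by hand, whereas the paper gets $k_{11}\ge k_{10}+k_{12}=1$ for free from the inequality. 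Conversely, the paper must exhibit three infinite families explicitly, while you need only four finite base cases.

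Two small remarks. First, your phrasing ``inside $\Spin(a+b)^{c_a}$'' only makes literal sense when $a$ is even; for the semigroup step you also combine $7+7$, $7+11$, etc., so you should simply say ``the image of $F_a\times F_b$ under the natural map $\Spin(a)\times\Spin(b)\to\Spin(a+b)$'', which works regardless of parity. Second, your recipe for $F_{11}$ (``adjoin three coordinates and a fifth generator to the $F_8$ code'') is too vague as stated: if the four $F_8$ generators are extended by zeros, the three new columns cannot be made pairwise distinct by a single extra row. A cleaner explicit choice is to take $F_{12}$ and drop the generator $c$: the subgroup $\langle -1, e_1e_2e_3e_4, e_1e_2e_5e_6, e_1e_2e_7e_8, e_1e_2e_9e_{10}, e_1e_3e_5e_7e_9e_{11}\rangle$ lies in $\Spin(11)$, is abelian (all pairwise intersections have size $2$), and its eleven coordinate characters are distinct and nonzero. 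Your dimension-$5$ claim is correct: a self-orthogonal $[11,4]$ code with distinct nonzero columns would, after removing those $11$ columns from all $15$ nonzero vectors of $\mathbb{F}_2^4$, leave four columns $d_1,\dots,d_4$ forming a $4\times 4$ matrix $D$ with $DD^T=0$, forcing $\rank D\le 2$ and hence at most $3$ distinct nonzero columns among the $d_i$---a contradiction.
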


\begin{proof}
We show that $k_{2n-1}\geq k_{2n-2}+k_{2n}$ for any integer $n>1$. Finite abelian subgroups $F$ of $\Spin(2n-1)$
satisfying the condition $(*)$ and with $-1\in F$ are divided into two disjoint classes: the subgroups with
$\mathfrak{so}(2n)^{F}\neq 0$; the subgroups with $\mathfrak{so}(2n)^{F}=0$. If $F$ is a finite abelian subgroup
of $\Spin(2n-2)$ satisfying the condition $(*)$ and with $-1,c_{2n-2}\in F$, then $F\subset\Spin(2n-1)$ is a
subgroup of $\Spin(2n-1)$ in the first class. One can show that, this gives an injective map\footnotemark
from the conjugacy classes of finite abelian subgroups of $\Spin(2n-2)$ satisfying the condition $(*)$ and with
$-1,c_{2n-2}\in F$ to the conjugacy classes of finite abelian subgroups of $\Spin(2n-1)$ in the first class.
\footnotetext{It needs to use a decomposition $\Spin(2n-1)=\Spin(2n-2)B\Spin(2n-2)$ to prove the injectivity
of this map, where $B\cong\Spin(2)$. We leave the proof to interested reader.}
If $F$ is a finite abelian subgroup of $\Spin(2n-1)$ satisfying the condition $(*)$ and with $-1\in F$ and being
in the second class, then $\langle F,c_{2n}\rangle\subset\Spin(2n)$ is a finite abelian subgroup of $\Spin(2n)$
satisfying the condition $(*)$ and with $-1,c_{2n}\in F$. One can show that, this gives a surjective map from
the conjugacy classes of finite abelian subgroups of $\Spin(2n-1)$ in the second class to the conjugacy
classes of finite abelian subgroups of $\Spin(2n)$ satisfying the condition $(*)$ and with $-1,c_{2n}\in F$.
By these, we get the inequality $k_{2n-1}\geq k_{2n-2}+k_{2n}$. Then, by Propositions \ref{P:Spin-3} and
\ref{P:Spin-4}, to show the conclusion we only need to show that $k_{n}>0$ for any even integer $n\geq 16$. An
even integer $n\geq 16$ is of the form $n=4m$ ($m\geq 4$), $n=14+8m$ ($m\geq 1$) or $n=18+8m$ ($m\geq 0$). Let
\begin{eqnarray*}&& F_{4m}=\langle e_1e_3\cdots e_{4m-1},e_1e_2e_{2i+1}e_{2i+2}:1\leq i\leq 2m-1\rangle,
\end{eqnarray*} \begin{eqnarray*}F_{14+8m}&=&\langle e_{8m+1}e_{8m+2}e_{8m+3}e_{8m+4},
e_{8m+1}e_{8m+2}e_{8m+5}e_{8m+6},\\&&e_{8m+1}e_{8m+3}e_{8m+5}e_{8m+7},e_{8m+8}e_{8m+9}e_{8m+10}e_{8m+11},\\&&
e_{8m+8}e_{8m+9}e_{8m+12}e_{8m+13},e_{8m+8}e_{8m+10}e_{8m+12}e_{8m+14},\\&&e_{8i+1}e_{8i+2}e_{8i+3}e_{8i+4},
e_{8i+1}e_{8i+2}e_{8i+5}e_{8i+6},\\&&e_{8i+1}e_{8i+3}e_{8i+5}e_{8i+7},e_{8i+5}e_{8i+6}e_{8i+7}e_{8i+8}:
0\leq i\leq m-1\rangle,\end{eqnarray*}
\begin{eqnarray*}F_{18+8m}&=&\langle e_{8m+1}e_{8m+3}e_{8m+5}e_{8m+7}e_{8m+9}e_{8m+11},e_{8m+1}e_{8m+2}e_{8m+3}e_{8m+4},
\\&&e_{8m+1}e_{8m+2}e_{8m+5}e_{8m+6},e_{8m+1}e_{8m+2}e_{8m+7}e_{8m+8},\\&&e_{8m+1}e_{8m+2}e_{8m+9}e_{8m+10},
e_{8m+12}e_{8m+13}e_{8m+14}e_{8m+15},\\&&e_{8m+12}e_{8m+13}e_{8m+16}e_{8m+17},e_{8m+12}e_{8m+14}e_{8m+16}e_{8m+18},
\\&& e_{8i+1}e_{8i+2}e_{8i+3}e_{8i+4},e_{8i+1}e_{8i+2}e_{8i+5}e_{8i+6},\\&&e_{8i+1}e_{8i+3}e_{8i+5}e_{8i+7},
e_{8i+5}e_{8i+6}e_{8i+7}e_{8i+8}:0\leq i\leq m-1\rangle.\end{eqnarray*}
They are finite abelian subgroups satisfying the condition $(*)$.
\end{proof}

The following proposition is concerned with finite abelian subgroups of the group
$\Spin(12)/\langle c\rangle$, which is needed for the study of abelian subgroups of the simple group
$\E_7/Z(\E_7)$. In $\Spin(12)/\langle c\rangle$, let \begin{eqnarray*}
F_1&=&\langle[e_1e_2e_3e_4e_5e_6],[e_1e_3\frac{1+e_5e_6}{\sqrt{2}}e_7e_9\frac{1+e_{11}e_{12}}{\sqrt{2}}],\\&&
[\frac{e_1+e_2}{\sqrt{2}}\frac{1+e_3e_4}{\sqrt{2}}e_5\frac{e_7+e_8}{\sqrt{2}}\frac{1+e_9e_{10}}{\sqrt{2}}e_{11}]
\rangle,\end{eqnarray*} \[F_2=\langle[-1],[e_1e_2e_3e_4],[e_5e_6e_7e_8],[e_3e_4e_5e_6],[e_7e_8e_9e_{10}],
[e_1e_3e_5e_7e_9e_{11}]\rangle,\] \[F_3=\langle[-1],[e_1e_2e_3e_4],[e_5e_6e_7e_8],[e_3e_4e_5e_6],
[e_7e_8e_9e_{10}],[e_1e_3e_5e_7e_9e_{11}],[\delta]\rangle,\] where
\[\delta=\frac{1+e_1e_2}{\sqrt{2}}\frac{1+e_3e_4}{\sqrt{2}}\frac{1+e_5e_6}{\sqrt{2}}
\frac{1+e_7e_8}{\sqrt{2}}\frac{1+e_9e_{10}}{\sqrt{2}}\frac{1+e_{11}e_{12}}{\sqrt{2}}.\]
The subgroup $F_1$ and $F_3$ are maximal abelian subgroups of $\Spin(12)/\langle c\rangle$ and apparently
$F_2$ is a subgroup of $F_3$.

\begin{prop}\label{P:Half Spin twelve}
Any finite abelian subgroup $F$ of $\Spin(12)/\langle c\rangle$ satisfying the condition $(*)$ and with
$-1\in F$ is conjugate to one of $F_1$, $F_2$, $F_3$.
\end{prop}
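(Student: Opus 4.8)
The plan is to mimic the strategy of Propositions \ref{P:Spin-3} and \ref{P:Spin-4}, transported from $\Spin(12)$ to the half-spin group $\Spin(12)/\langle c\rangle$, by fixing a suitable element and reducing to a centralizer which is a product of matrix groups. First I would lift the problem: a finite abelian subgroup $F\subset\Spin(12)/\langle c\rangle$ with $-1\in F$ corresponds to its preimage $\widetilde{F}\subset\Spin(12)$, an abelian subgroup containing $\langle -1,c\rangle$. The subtlety here, and the reason this group behaves differently from $\Spin(12)$ itself, is that $\widetilde{F}$ need not be contained in $K_{12}=\langle e_1e_2,\dots,e_1e_{12}\rangle$: the elements $\delta$ and the factors like $\tfrac{1+e_1e_2}{\sqrt 2}$ appearing in $F_1$ and $F_3$ are \emph{not} of the form $\pm e_I$. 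So the first step is to understand, via the condition $(\ast)$ and the diagonalizability argument at the start of the section, which conjugacy classes of elements of $\Spin(12)/\langle c\rangle$ can occur, noting that modulo $c$ the two central elements $\pm 1$ and $\pm c$ collapse, so $e_I$ and $c\cdot e_I=\pm e_{I^c}$ become identified.

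Next I would split into cases according to the projection $\pi(F)\subset\SO(12)/\langle -I\rangle$ and the structure of a distinguished element. The natural anchor is an element whose centralizer is a product of unitary or symplectic groups: conjugating $\pi(F)$ into the diagonal torus, one reduces to analyzing which elements $[e_I]$ (with $|I|$ even, well-defined modulo complementation) lie in $F$, together with possible ``square-root'' elements like $[\delta]$. I would organize the casework by whether $F$ contains an element of the shape $[e_1e_2e_3e_4e_5e_6]$ (leading to a centralizer $(\SU(4)\times\SU(4))/\langle(-I,-I)\rangle$ as in Proposition \ref{P:Spin-4}), yielding the $F_1$ branch, versus the case where $F$ is generated by $4$-element and $6$-element products within $K_{12}$ (the $F_2,F_3$ branch, closely mirroring the structure of $F_{12}$). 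In each branch I would fix the anchoring element, pass to its centralizer, project $F$ onto the matrix-group factors, and apply \cite{Yu2}, Proposition 2.1 to classify each projection as an elementary abelian $2$-group of the standard form, using the bimultiplicative compatibility of the commutator forms $m_i$ across the factors to pin down how the projections glue.

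The role of $F_2$ versus $F_3$ is the crucial structural point: $F_3=\langle F_2,[\delta]\rangle$ where $[\delta]$ is a genuine square root of the product of the generators (it is not expressible as $\pm e_I$), so the classification must detect whether the abelian subgroup admits such an enlarging element inside $\Spin(12)/\langle c\rangle$. Concretely, I would verify that $\delta$ commutes with the generators of $F_2$ in the quotient (its obstruction to commuting in $\Spin(12)$ being absorbed by $c$), so that $F_3$ is genuinely abelian while its preimage in $\Spin(12)$ is \emph{not}; this is precisely the phenomenon that makes the half-spin group yield a maximal abelian subgroup with no $\Spin(12)$ analogue. I would then check maximality of $F_1$ and $F_3$ directly, by confirming $\mathfrak{so}(12)^{F_i}=0$ and that no further commuting element can be adjoined, and identify $F_2$ as the non-maximal intermediate subgroup sitting inside $F_3$.

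The main obstacle I anticipate is the bookkeeping around the non-$e_I$ elements $\delta$ and the half-integer ``$\tfrac{1\pm e_ie_j}{\sqrt 2}$'' factors: unlike the pure $\Spin(n)$ cases treated earlier, one cannot reduce entirely to combinatorics of subsets $I\subset\{1,\dots,12\}$ together with a bimultiplicative sign form, because these extra generators mix the diagonal and off-diagonal behaviour. Carefully tracking how conjugation by $c$ identifies $e_I$ with $e_{I^c}$, and hence how the commutator form $m$ from Lemma \ref{L:Spin-1} descends to the quotient, will be the delicate part; once the descended form is understood, the rank bounds from Lemma \ref{L:Spin-1} and the matrix-group input from \cite{Yu2} should close each case and confirm that $F_1$, $F_2$, $F_3$ exhaust the conjugacy classes.
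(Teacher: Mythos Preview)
Your plan has the right instinct---anchor on a well-chosen element and reduce to matrix groups---but there is a genuine gap in the central step, and the resulting case split is mis-organised.

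The claim that the centralizer of $[e_1e_2e_3e_4e_5e_6]$ in $\Spin(12)/\langle c\rangle$ is $(\SU(4)\times\SU(4))/\langle(-I,-I)\rangle$ is false. In a quotient $G/Z$ the centralizer of $[y]$ consists of all $[g]$ with $gyg^{-1}\in yZ$, which is generally strictly larger than $G^{y}/Z$. Here $y_1=e_1e_2e_3e_4e_5e_6$ and $cy_1=e_7e_8e_9e_{10}e_{11}e_{12}$ are conjugate in $\Spin(12)$, so the centralizer of $[y_1]$ in the half-spin group has an extra component beyond $\Spin(12)^{y_1}/\langle c\rangle$. This is not a cosmetic issue: $F_3$ contains a $6$-index element $[e_1e_3e_5e_7e_9e_{11}]$ \emph{and} the element $[\delta]$, and $\delta$ conjugates $e_1e_3e_5e_7e_9e_{11}$ to $c$ times itself rather than to itself. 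Thus $F_3$ sits inside the true centralizer in the quotient but \emph{not} inside $\Spin(12)^{y_1}/\langle c\rangle$. Your proposed anchoring step therefore cannot separate $F_3$ from $F_1,F_2$, and your assignment of $F_1$ alone to the ``$|I|=6$'' branch and $F_2,F_3$ to the other branch is incorrect (both $F_2$ and $F_3$ contain elements of the shape $[e_I]$ with $|I|=6$).

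The paper avoids this by projecting \emph{down} rather than lifting up: it uses $\pi:\Spin(12)/\langle c\rangle\to\SO(12)/\langle-I\rangle$ and applies the structure theory of abelian subgroups of projective orthogonal groups from \cite{Yu2}, Section~3. To $F'=\pi(F)$ one attaches the diagonalizable invariant $B_{F'}$ and, when $B_{F'}$ contains no $I_{6,6}$, integers $(k,s_0)$ with $12=2^{k}s_0$. The case $B_{F'}\ni I_{6,6}$ is exactly what guarantees that $F$ lies in the honest image $\Spin(12)^{y_1}/\langle c\rangle\cong(\SU(4)\times\SU(4))/\langle(iI,iI)\rangle$, and then \cite{Yu2}, Proposition~2.1 yields both $F_1$ and $F_2$. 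The remaining cases are handled by the value of $k$: $k=0$ forces the preimage in $\Spin(12)$ to be abelian and one reaches a contradiction via Proposition~\ref{P:Spin-4}; $k=1$ pins down $B_{F'}$ uniquely and produces $F_3$; $k=2$ is ruled out by a commutator computation in $\Sp(1)^{6}$. The invariant $B_{F'}$ is precisely the missing tool that lets one decide when the naive centralizer reduction is valid and that organises the non-$e_I$ elements such as $\delta$ that you correctly flag as the main difficulty.
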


\begin{proof}
Let $\pi:\Spin(12)/\langle c\rangle\longrightarrow\SO(12)/\langle-I\rangle$ be the natural projection
and $F'=\pi(F)$. As the discussion of abelian subgroups of projective orthogonal groups in \cite{Yu2}
Section 3, one associates a diagonalizable elementary abelian 2-subgroup $B_{F'}$ to $F'$. We consider
separate cases according to $B_{F'}$ contains an element conjugate to $I_{6,6}$ or not.


If $B_{F'}$ contains an element conjugate to $I_{6,6}$, we may and do assume that
$x_1=[e_1e_2e_3e_4e_5e_6]\in F$ and $\pi(x)\in B_{F'}$. Let $y_1=e_1e_2e_3e_4e_5e_6\in \Spin(12)$. Since
$\pi(x)\in B_{F'}$, one has \[F\subset\Spin(12)^{y_1}/\langle c\rangle\cong(\SU(4)\times
\SU(4))/\langle(iI,iI)\rangle,\] where $y_1=(iI,I)$. By \cite{Yu2}, Proposition 2.1, one can show that
\[F\sim\langle[(iI,I)],[(A_4,A_4)],[(B_4,B_4)]\rangle\] or \[F\sim\langle[(iI,I)],[(I_{2,2},I_{2,2})],
[(J'_2,J'_2)],[(A'_4,A'_4)],[(B'_4,B'_4)]\rangle,\] where \[A'_4=\diag\{I_{1,1},I_{1,1}\},\
B'_4=\diag\{J'_1,J'_1\},\] \[A_4=\frac{1+i}{\sqrt{2}}\diag\{1,i,-1,-i\},\ B_4=
\frac{1+i}{\sqrt{2}}\left(\begin{array}{cccc}0&1&0&0\\0&0&1&0\\0&0&0&1\\1&0&0&0\\\end{array}\right).\]
These two subgroups are conjugate to $F_1$ and $F_2$ respectively.

If $B_{F'}$ contains no elements conjugate to $I_{6,6}$, by \cite{Yu2}, Proposition 3.1, we have integers
$k\geq 0$ and $s_0\geq 1$ such that $12=2^{k}s_0$. Then, $k=0$, $1$ or $2$. In the case of $k=0$, the
subgroup $F'$ is diagonalizable. Hence the pre-image of $F$ in $\Spin(12)$ is abelian. By Proposition
\ref{P:Spin-4}, an abelian subgroup $F$ of $\Spin(12)$ satisfying the condition $(\ast)$ and with $-1,c
\in F$ contains an element conjugate to $e_1e_2e_3e_4e_5e_6$, which is a contradiction. In the case of
$k=1$, one has $s_0=6$. Then, $\log_{2} 6\leq \rank B_{F'}\leq 6-1$. Thus $3\leq \rank B_{F'}\leq 5$. As
$B_{F'}$ contains no elements conjugate to $I_{6,6}$, one can show that $\rank B_{F'}=4$ and
\[B_{F'}\sim\langle I_{4,8},I_{8,4},\diag\{-I_{2,4},I_6\},\diag\{I_6,I_{4,2}\}\rangle.\] Hence
\[F\subset\Spin(12)^{\pi^{-1}(B_{F'})}/\langle c\rangle=(\Spin(2)^{6}/\langle(c_2,\dots,c_2)\rangle)\rtimes
\langle e_1e_3e_5e_7e_9e_{11}\rangle.\] Therefore \[F\sim F_3=\langle[e_1e_2e_3e_4],[e_5e_6e_7e_8],
[e_3e_4e_5e_6],[e_7e_8e_9e_{10}],e_1e_3e_5e_7e_9e_{11},\delta\rangle.\] In the case of $k=2$, one has
$s_0=3$. Then, $\rank B_{F'}=2$ and there exist elements $[x'_1],[x'_2],[x'_3],[x'_4]\in F$,
$x'_1,x'_2,x'_3,x'_4\in\Spin(12)$ with $x'_1x'_2x_1^{'-1}x_2^{'-1}=x'_3x'_4x_3^{'-1}x_4^{'-1}=c$, and
$x'_{i}x'_{j}x_{i}^{'-1}x_{j}^{'-1}=1$ for $i\in\{1,2\},j\in\{3,4\}$. We may assume that
$$B_{F'}=\langle[I_{4,8}],[I_{8,4}]\rangle.$$ Thus $$F\subset(\Spin(12)/\langle c\rangle)^{[e_1e_2e_3e_4],
[e_5e_6e_7e_8]}=\Spin(4)^3/\langle(-1,-1,1),(1,-1,-1),(c,c,c)\rangle,$$ which is isomorphic to
$$\Sp(1)^{6}/\langle(-1,-1,-1,-1,1,1),(1,1,-1,-1,-1,-1),(1,-1,1,-1,1,-1)\rangle.$$ Consideration in
$\Sp(1)^{6}$ shows $x'_1x'_2x_1^{'-1}x_2^{'-1}=x'_3x'_4x_3^{'-1}x_4^{'-1}=c$ and
$x'_{i}x'_{j}x_{i}^{'-1}x_{j}^{'-1}=1$ for $i\in\{1,2\},j\in\{3,4\}$ can not hold.
\end{proof}

Denote by $G=\Spin(12)/\langle c\rangle$. For $F_1$, denote by $x=[e_1e_2e_3e_4e_5e_6]$,
$$x_1=[e_1e_3\frac{1+e_5e_6}{\sqrt{2}}e_7e_9\frac{1+e_{11}e_{12}}{\sqrt{2}}],\ x_2=[\frac{e_1+e_2}{\sqrt{2}}
\frac{1+e_3e_4}{\sqrt{2}}e_5\frac{e_7+e_8}{\sqrt{2}}\frac{1+e_9e_{10}}{\sqrt{2}}e_{11}].$$ Let
$$y=\frac{1+e_{1}e_{7}}{\sqrt{2}}\frac{1+e_{2}e_{8}}{\sqrt{2}}\frac{1+e_{3}e_{9}}{\sqrt{2}}
\frac{1+e_{4}e_{10}}{\sqrt{2}}\frac{1+e_{5}e_{11}}{\sqrt{2}}\frac{1-e_{6}e_{12}}{\sqrt{2}}.$$ Then,
$y\in G^{x}-(G^{x})_0$, $yx_{1}y^{-1}=x_{1}^{-1}$, $yx_{2}y^{-1}=x_{2}$. The action of $y$ on
$F_1/\langle x\rangle\cong(\bbZ/4\bbZ)^2$ is an invertible transformation of determinant $-1\in
(\bbZ/4\bbZ)^{\ast}$. Consideration in $(G^{x})_0$ shows $N_{(G^{x})_0}(F_1)/C_{(G^{x})_0}(F_1)\cong
\Hom((\mathbb{Z}/4\mathbb{Z})^2,\mathbb{Z}/4\mathbb{Z})\rtimes\SL(2,\mathbb{Z}/4\mathbb{Z})$. Hence
$$\Stab_{W(F_1)}(x)\cong\Hom((\mathbb{Z}/4\mathbb{Z})^2,\mathbb{Z}/4\mathbb{Z})\rtimes
\GL(2,\mathbb{Z}/4\mathbb{Z}).$$ On the other hand, there are eight elements in the $W(F_1)$ orbit of $x$
in $F_1$. Hence $|W(F_1)|=8\times 4^2\times|\GL(2,\mathbb{Z}/4\mathbb{Z})|=8\times 4^2\times 16\times 6=
3\times 2^{12}$. Let $K=\{y\in F_1: y^2=1\}$. Since $-1\in Z(G)$, one has
$W(K)\cong\mathbb{F}_{2}^{2}\rtimes\GL(2,\mathbb{F}_2)$. There is a homomorphism $p: W(F_1)\longrightarrow W(K)$.
By counting the order, one can show that $p$ is surjective and $\ker p=\Hom(F_1/K,K)$. Therefore there is
an exact sequence $$1\rightarrow\Hom(\mathbb{F}_{2}^{3},\mathbb{F}_{2}^{3})\rightarrow W(F_1)
\rightarrow \mathbb{F}_{2}^{2}\rtimes\GL(2,\mathbb{F}_2)\rightarrow 1.$$ The subgroup $F_2$ is similar
as the subgroup $F_{12}$ of
$\Spin(12)$, one has $$W(F_{2})=\mathbb{F}_2^{5}\rtimes(\mathbb{F}_{2}^{4}\rtimes S_6).$$
For $F_3$, denote by \[K=\langle[-1],[e_1e_2e_3e_4],[e_5e_6e_7e_8],[e_3e_4e_5e_6],[e_7e_8e_9e_{10}]
\rangle\] and $x=[e_1e_3e_5e_7e_9e_{11}]$. There is a homomorphism $p: W(F_3)\longrightarrow W(K)\times
\GL(F_3/K)$. Apparently $W(K)\cong\mathbb{F}_2^{4}\rtimes S_6$. Since the coset $\delta K$ is stable under
$W(F_3)$, the projection of the image of $p$ to $\GL(F_3/K)$ lies in $S_2=\langle\sigma\rangle$, where
$\sigma(\delta K)=\delta K$, $\sigma(xK)=\delta xK$ and $\sigma(\delta xK)=xK$. An element $w\in\ker p$
is given by the conjugation action of element $g\in C_{G}(K)$. One can show that $C_{G}(K)$ is generated
by $F_3$ together with $\Spin(2)^{6}$, which act as identity on $\delta$. Hence $\ker p\subset
\Hom(F_3/\langle\delta,K\rangle,K)\cong\mathbb{F}_2^{5}$. The conjugation action of elements like $e_1e_2$,
$(\cos\theta+\sin\theta e_{2i-1}e_{2i})(\cos\theta+\sin\theta e_{2i+1}e_{2i+2})$, $1\leq i\leq 5$,
$\theta=\frac{\pi}{4}$ generates $\ker p=\Hom(F_3/\langle\delta,K\rangle,K)$. The conjugation action of
elements like $\frac{1+e_{2i+1}e_{2i+3}}{\sqrt{2}}\frac{1+e_{2i+2}e_{2i+4}}{\sqrt{2}}$, $e_{2i+1}e_{2i+3}$,
$0\leq i\leq 4$ generate $\mathbb{F}_2^{4}\rtimes S_6$. The conjugation action of
$\prod_{1\leq i\leq 6}(\cos\theta+\sin\theta e_{2i-1}e_{2i})$ generates $S_2$, where $\theta=\frac{\pi}{8}$.
Therefore $$W(F_{3})=\mathbb{F}_2^{5}\rtimes((\mathbb{F}_2^{4}\rtimes S_6)\times S_2).$$

\section{Compact simple Lie groups $\G_2$ and $\F_4$}

\subsection{Preliminaries}
We follow the notation in \cite{Yu}, Subsection 3.1. Given a complex simple Lie algebra $\frg$, with a
Cartan subalgebra $\frh$, Killing form denoted by $B$, root system $\Delta=\Delta(\frg,\frh)$. For each
element $\lambda\in\frh^{\ast}$, let $H_{\lambda}\in\frh$ be defined by \[B(H_{\lambda},H)=
\lambda(H),\ \forall H\in\frh,\] in particular we have an element $H_{\alpha}\in\frh$ for each root $\alpha$.
Let \begin{equation} H'_{\alpha}=\frac{2}{\alpha(H_{\alpha})}H_{\alpha},\label{eq:coroot}\end{equation}
which is called a co-root vector. Choose a simple system $\Pi=\{\alpha_1,\alpha_2,\dots,\alpha_{r}\}$ of
the root system $\Delta$, with the Bourbaki numbering (\cite{Bourbaki}, Pages 265-300). For simplicity we
write $H_{i}$, $H'_{i}$ for $H_{\alpha_{i}}$, $H'_{\alpha_{i}}$, $1\leq i\leq r$. For a root $\alpha$,
denote by $X_{\alpha}$ a non-zero vector in the root space $\frg_{\alpha}$. One can normalize the root
vectors $\{X_{\alpha}\}$ appropriately so that the real subspace \begin{equation}\fru_0=
\span_{\bbR}\{X_{\alpha}-X_{-\alpha},i(X_{\alpha}+X_{-\alpha}),i H_{\alpha}:\alpha \in \Delta^{+}\}
\label{eq:compact real form}\end{equation} is a compact real form of $\frg$ (\cite{Knapp}, Pages 348-354).
We denote by $\frf_4$ a compact simple Lie algebra of type $\bf F_4$. Let $\F_4$ be a connected and simply
connected Lie group with Lie algebra $\frf_4$. Let $\frf_4(\bbC)$ and $\F_4(\bbC)$ denote their
complexifications. Similar notations are used for other types. Given a compact semisimple Lie algebra
$\fru_0$, let $\Aut(\fru_0)$ be the group of automorphisms of $\fru_0$ and $\Int(\fru_0)=\Aut(\fru_0)_0$
be the group of inner automorphisms.

\smallskip

Given a connected compact Lie group $G$ and an abelian subgroup $F$, let $\fra_0=\Lie F$,
$\fra=\fra_0\otimes_{\bbR}\bbC$. Denote by $L=C_{G}(\fra_0)$ and $L_s=[L,L]$. Then, $L$ is connected
and $L=Z(L)_0\cdot L_{s}$ (\cite{Knapp}, Page 260, Corollary 4.51).

\begin{lemma}\label{L:center}
If $F\subset G$ is an abelian subgroup satisfying the condition $(*)$, then $\fra=Z(C_{\frg}(\fra))$
and $F\cap L_{s}\subset L_{s}$ also satisfies the condition $(*)$.
\end{lemma}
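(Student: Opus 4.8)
The plan is to work entirely inside the complexified Lie algebra $\frg$ and its root-space decomposition relative to the adjoint action of $F$ (and of $\fra_0$). Write $\frg = \Lie(G)\otimes_{\bbR}\bbC$ and recall $L = C_G(\fra_0)$, $L_s = [L,L]$, so that $C_\frg(\fra)$ is the complexified Lie algebra of $L$. Since $L = Z(L)_0\cdot L_s$ with $L$ connected, its Lie algebra decomposes as $C_\frg(\fra) = Z(C_\frg(\fra))\oplus [C_\frg(\fra),C_\frg(\fra)]$, where $\fra\subset Z(C_\frg(\fra))$ automatically because $\fra$ is abelian and central in its own centralizer. The first assertion $\fra = Z(C_\frg(\fra))$ therefore reduces to showing the reverse inclusion $Z(C_\frg(\fra))\subset\fra$, i.e.\ that the center of the Levi centralizer is no larger than $\fra$ itself.

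First I would translate the hypothesis $(\ast)$ into the statement $\frg^F = \fra$. Indeed, $\dim\frg_0^F = \dim F = \dim\fra_0$ by $(\ast)$, and since $\fra_0\subset\frg_0^F$ always holds (as $F$ is abelian and $\fra_0 = \Lie F$ is its Lie algebra), the dimension count forces $\frg_0^F = \fra_0$, hence $\frg^F = \fra$ after complexifying. The key observation is then that $Z(C_\frg(\fra))$ is contained in $\frg^F$: any element of $Z(C_\frg(\fra))$ commutes with $\fra_0 = \Lie F$ and with the identity component $F_0$, and one checks it is fixed by the finite part of $F$ as well because $F$ normalizes its centralizer and acts trivially on the center of the reductive algebra $C_\frg(\fra)$ (the component group of $F$ acts on $Z(C_\frg(\fra))$ by a finite group of automorphisms that must be trivial since these elements already lie in the fixed algebra of the connected part and the center is determined by $\fra$). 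Combining, $Z(C_\frg(\fra))\subset\frg^F = \fra$, which gives $\fra = Z(C_\frg(\fra))$.

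For the second assertion I would set $F' = F\cap L_s$ and verify condition $(\ast)$ for $F'$ inside the compact group $L_s$, namely $\dim(\Lie L_s)^{F'} = \dim F'$. The natural approach is to compute the $F$-fixed subalgebra of $\frg$ by restricting to the reductive subalgebra $C_\frg(\fra)$: since $\frg^F = \fra$ and $\fra\subset C_\frg(\fra)$, the fixed points of $F$ acting on $C_\frg(\fra) = \fra\oplus\frl_s$ (where $\frl_s = [C_\frg(\fra),C_\frg(\fra)]$) are exactly $\fra\oplus (\frl_s)^{F}$, and this must equal $\fra$; hence $(\frl_s)^F = 0$. One then identifies $(\frl_s)^F$ with $(\frl_s)^{F'}$ using that $\fra$ acts trivially on $\frl_s$ (it is central) and that $F = F_0\cdot F'$ modulo the center, so the action of $F$ on $\frl_s$ factors through $F'$. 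Matching the dimension of $F'$ to $\dim(\frl_s)^{F'}=0$ — together with the elementary fact that $\Lie F' \subset (\frl_s)^{F'}$ — yields both that $F'$ is finite and that it satisfies $(\ast)$ in $L_s$.

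The main obstacle I anticipate is the bookkeeping around the finite component group of $F$ and the passage between $F$, $F\cap L_s$, and $F_0$: one must be careful that the action of the full abelian group $F$ on the center $Z(C_\frg(\fra))$ is genuinely trivial (so that $Z(C_\frg(\fra))\subset\frg^F$), rather than merely the action of its identity component. This requires exploiting that $F$ is abelian and contains $\fra_0$ as its Lie algebra, so the component group $F/F_0$ centralizes $\fra_0$ and hence stabilizes $C_\frg(\fra)$ and acts trivially on its center, which is pinned down by $\fra$. Once this triviality is established the rest is a dimension count, so I expect the conceptual weight of the lemma to sit entirely in reconciling the fixed points of the full (possibly disconnected) group $F$ with those of its connected part.
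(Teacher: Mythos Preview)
Your plan is correct and follows the same route as the paper: translate $(\ast)$ into $\frg^{F}=\fra$, show $Z(C_{\frg}(\fra))\subset\frg^{F}$, and then split $C_{\frg}(\fra)=\fra\oplus\frl_{s}$ to get $(\frl_{s})^{F}=0$ and pass to $F\cap L_{s}$.

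One remark on the ``main obstacle'' you flag. Your proposed justification that the component group of $F$ acts trivially on $Z(C_{\frg}(\fra))$ because the center is ``pinned down by $\fra$'' is not a valid inference as stated: knowing that $F$ fixes $\fra$ only tells you $F$ stabilizes $Z(C_{\frg}(\fra))$ as a set, not pointwise. The actual reason is much simpler and has nothing to do with $\fra$ determining the center: since $F$ is abelian one has $F\subset C_{G}(\fra_0)=L$, and $Z(L)_0$ is by definition central in $L$, so every $f\in F$ commutes with $Z(L)_0$ and hence $\Ad(f)$ fixes $\Lie Z(L)_0$ pointwise. This is exactly how the paper argues (phrased at the group level as $Z(L)_0\subset F$, hence $F_0=Z(L)_0$), and once you see it this way the ``obstacle'' disappears and the decomposition $F=F_0\cdot(F\cap L_{s})$ needed for the second part also follows immediately from $L=Z(L)_0\cdot L_{s}$.
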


\begin{proof}
Since $F$ is an abelian subgroup of $G$ satisfying the condition $(*)$, one has $F\subset L$ and
$Z(L)_0\subset F$. Thus $F_0=Z(L)_0$ and $F=F_0(F\cap L_{s})$. Hence $\fra=Z(C_{\frg}(\fra))$ follows.
As $F_0=Z(L)_0$ commutes with $L_{s}$ and $F$ satisfies the condition $(\ast)$, $F\cap L_{s}$ is an
abelian subgroup of $L_{s}$ satisfying the condition $(\ast)$.
\end{proof}

Note that, $C_{\frg}(\fra)$ is a Levi subalgebra. Hence Lemma \ref{L:center} indicates that the
complexified Lie algebra of $F$ is the center of a Levi subalgebra. The following is a well known
property of Levi subgroups.

\begin{lemma}\label{L:Levi}
Let $G$ be a compact connected and simply connected Lie group and $L$ be a closed connected subgroup
of $G$. If $\frl=\Lie L\otimes_{\bbR}\bbC$ is a Levi subalgebra of $\frg$, then $L_{s}=[L,L]$ is
simply connected.
\end{lemma}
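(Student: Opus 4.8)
The plan is to compute the integer lattice (the kernel of the exponential map on a maximal torus) of $L_{s}$ and show it coincides with the coroot lattice of $L_{s}$, which for a compact connected semisimple group is equivalent to being simply connected. To set up coordinates, note that a Levi subalgebra contains a Cartan subalgebra of $\frg$, so $L$ has full rank; I would choose a maximal torus $T$ of $G$ with $\frh:=\Lie T\otimes_{\bbR}\bbC\subset\frl$. Then $T$ is a maximal torus of both $G$ and $L$, the subgroup $T_{s}:=(T\cap L_{s})_0$ is a maximal torus of $L_{s}$, and $\Lie T_{s}$ is the real span of the coroots $\{\alpha^{\vee}:\alpha\in\Delta_{L}\}$, where $\Delta_{L}=\Delta(\frl,\frh)$ is the root system of $\frl$.

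Next I would recall the standard description of the fundamental group: for a compact connected Lie group $H$ with maximal torus $T_H$, writing $\Gamma_H=\ker(\exp\colon\Lie T_H\to T_H)$ for the integer lattice and $Q^{\vee}_H$ for the coroot lattice, one has $\pi_1(H)\cong\Gamma_H/Q^{\vee}_H$; in particular $H$ is simply connected if and only if $\Gamma_H=Q^{\vee}_H$. Since $G$ is simply connected, $\Gamma_G$ equals the full coroot lattice $Q^{\vee}=\bbZ\langle\alpha^{\vee}:\alpha\in\Delta\rangle$, which has the simple coroots $\alpha_1^{\vee},\dots,\alpha_r^{\vee}$ as a $\bbZ$-basis. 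Because $L_{s}$ is a closed subgroup of $G$, its exponential map is the restriction of that of $G$, so $\Gamma_{L_{s}}=\Gamma_G\cap\Lie T_{s}=Q^{\vee}\cap\Lie T_{s}$.

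It then remains to show $Q^{\vee}\cap\Lie T_{s}=Q^{\vee}_{L_{s}}$, the coroot lattice of $\Delta_{L}$. Here I would invoke the defining property of a Levi subalgebra: after choosing the positive system suitably, $\Delta_{L}$ is generated by a subset $S\subset\{1,\dots,r\}$ of the simple roots, so that $Q^{\vee}_{L_{s}}=\bbZ\langle\alpha_i^{\vee}:i\in S\rangle$ and $\Lie T_{s}=\span_{\bbR}\{\alpha_i^{\vee}:i\in S\}$. Writing an element of $Q^{\vee}$ as $\sum_{i=1}^{r}n_i\alpha_i^{\vee}$ with $n_i\in\bbZ$, membership in $\Lie T_{s}$ forces $n_i=0$ for $i\notin S$ by linear independence of the simple coroots, so the element lies in $Q^{\vee}_{L_{s}}$. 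This yields $\Gamma_{L_{s}}=Q^{\vee}_{L_{s}}$ and therefore that $L_{s}$ is simply connected.

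The main obstacle is this last step, where the word \emph{Levi} is essential: one must use that $\Delta_{L}=\Delta\cap\span_{\bbR}(\Delta_{L})$, equivalently that $\Delta_{L}$ is conjugate to a standard Levi subsystem generated by simple roots, which is precisely what guarantees that the simple coroots of $\Delta_{L}$ extend to a $\bbZ$-basis of $Q^{\vee}$ and hence that $Q^{\vee}_{L_{s}}$ is saturated in $Q^{\vee}$. For a merely reductive (pseudo-Levi) subalgebra this saturation can fail and $L_{s}$ need not be simply connected, so the argument genuinely relies on the hypothesis.
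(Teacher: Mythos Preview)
Your proof is correct and follows essentially the same route as the paper's: both choose a maximal torus of $G$ contained in $L$, use that simple connectedness of $G$ identifies the integer lattice with the coroot lattice spanned by the simple coroots, and then use the Levi hypothesis to arrange that the simple roots of $\frl$ form a subset $\Pi'$ of $\Pi$, so that the integer lattice of $T_{s}=(T\cap L_{s})_0$ is exactly $\bbZ\langle\alpha^{\vee}:\alpha\in\Pi'\rangle$. Your write-up is in fact a bit more explicit than the paper's in justifying the saturation step $Q^{\vee}\cap\Lie T_{s}=Q^{\vee}_{L_{s}}$, and your closing remark about pseudo-Levi subgroups nicely explains why the hypothesis is sharp.
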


\begin{proof}
Choose a maximal torus $T$ of $G$. Let $\frt_0=\Lie T$ and $\frh=\frt_0\otimes_{\bbR}\bbC$. Then,
$\frh$ is a Cartan subalgebra of $\frg$. We have a root system $\Delta$, a simple system $\Pi$ of
$\Delta$, and co-root vectors $H'_{\alpha}=\frac{2}{\alpha(H_{\alpha})}H_{\alpha}\in i\frt_0$. The
simply connectedness of $G$ is equivalent to the kernel of exponential map
$\exp:\frt_0\longrightarrow T$ is equal to $\span_{\bbZ}\{2\pi iH'_{\alpha_{j}}: \alpha_{j}\in\Pi\}$.
Since $\frl$ is a Levi subalgebra, without loss of generality we assume that $T\subset L$ and a subset
$\Pi'$ of $\Pi$ is a simple system of the root system of $L$. Then, $T'=(T\cap L_{s})_0$ is a maximal
torus of $L_s$ and the kernel of exponential map $\exp: \frt'_0\longrightarrow T'$ is equal to
$\span_{\bbZ}\{2\pi iH'_{\alpha_{j}}: \alpha_{j}\in\Pi'\}$. Hence $L_{s}$ is simply connected.
\end{proof}


\begin{lemma}\label{L:Levi-center}
Let $G$ be a compact connected Lie group and $L$ be a closed connected subgroup of $G$. If $\frl=\Lie L
\otimes_{\bbR}\bbC$ is a Levi subalgebra of $\frg$, then $Z(L_{s})\subset Z(G)Z(L)_0$.
\end{lemma}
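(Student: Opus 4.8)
The plan is to prove $Z(L_s)\subset Z(G)Z(L)_0$ by working with a maximal torus and passing to the level of characters/cocharacters, reducing the statement to a combinatorial fact about root and weight lattices. First I would choose a maximal torus $T$ of $G$ containing (by conjugacy) a maximal torus of $L$, set $\frt_0=\Lie T$, $\frh=\frt_0\otimes_{\bbR}\bbC$, and pick a simple system $\Pi$ of $\Delta=\Delta(\frg,\frh)$ so that a subset $\Pi'\subset\Pi$ is a simple system for the root system $\Delta'$ of $L_s$; this is possible precisely because $\frl$ is a Levi subalgebra. Then $T'=(T\cap L_s)_0$ is a maximal torus of $L_s$, and every element of $Z(L_s)$ lies in $T'$ because the center of a connected compact group sits inside every maximal torus.

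The key reduction is to express $Z(L_s)$ via the exponential map. Writing $z=\exp(H)\in T'$ for some $H\in\frt'_0$, the condition $z\in Z(L_s)$ is equivalent to $\alpha(H)\in 2\pi i\bbZ$ for every $\alpha\in\Delta'$, i.e. $H$ lies in the coweight lattice of $\Delta'$. On the other hand, $z\in Z(G)$ means $\alpha(H)\in 2\pi i\bbZ$ for every $\alpha\in\Delta$. The point is that these conditions differ only on the roots outside $\Delta'$: since $\Pi'\subset\Pi$, a root $\alpha\in\Delta\setminus\Delta'$ may fail to be integral on $H$, but the values $\alpha(H)\bmod 2\pi i\bbZ$ are controlled by the fundamental coweights dual to $\Pi\setminus\Pi'$. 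I would make this precise by decomposing $\frt'_0$ and exhibiting, for a generator of $Z(L_s)$, a correction term coming from $Z(L)_0$. Concretely, $Z(L)_0=\exp(\frz)$ where $\frz$ is the span of the coweights dual to $\Pi\setminus\Pi'$ (the center of $L$ is connected and corresponds to the directions orthogonal to $\Delta'$), so after subtracting a suitable element of $\frz$ from $H$ one can arrange that the remaining vector is integral on \emph{all} of $\Delta$, placing its exponential in $Z(G)$.

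The main obstacle, and the step I expect to require the most care, is the integrality bookkeeping: given $z\in Z(L_s)$, I must produce $t\in Z(L)_0$ with $zt^{-1}\in Z(G)$, and this amounts to showing that the class of $H$ modulo the cocharacter lattice of $T$ can be adjusted by an element of $\frz$ so that all of $\Delta$ (not merely $\Delta'$) evaluates integrally. Because $G$ is only assumed connected (not simply connected, in contrast to Lemma \ref{L:Levi}), I cannot use a clean description of $\ker(\exp)$ as the full coroot lattice; instead I should argue intrinsically. A cleaner route that sidesteps the lattice arithmetic is this: let $z\in Z(L_s)$ and consider conjugation by any $g\in G$. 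Since $z\in L_s$ and $L_s$ is normal in $L$ with $L=Z(L)_0 L_s$, the element $z$ centralizes $L$, so $C_G(z)\supset L$; analyzing the $G$-conjugacy and using that $\Ad(z)$ acts trivially on $\frl$ shows $\Ad(z)$ acts on $\frg$ with eigenvalues determined by $\alpha(H)$ for $\alpha\in\Delta\setminus\Delta'$, and one checks these eigenvalues are roots of unity realized already by the torus $Z(L)_0$. I would therefore finish by noting that the homomorphism $Z(L_s)\to T/Z(L)_0$ has image landing in the subgroup fixed by the Weyl group of $L_s$, which forces $zt^{-1}$ to be central in $G$ for an appropriate $t\in Z(L)_0$; the verification that such $t$ exists is exactly the fundamental-coweight computation above, and that is where all the real content lies.
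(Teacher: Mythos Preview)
Your plan is essentially the paper's argument: write $z=\exp(H)$ with $\alpha(H)\in 2\pi i\bbZ$ for all $\alpha\in\Delta'$, then subtract an element of $\frz_0=\Lie Z(L)_0$ so that the remaining simple roots $\alpha_j\in\Pi\setminus\Pi'$ also become integral, whence the corrected exponential lies in $Z(G)$. The paper streamlines this by first reducing to the case where $G$ is simple and $\rank L_s=\rank G-1$ (the general case follows by iterating), so that $\Pi\setminus\Pi'=\{\alpha_i\}$ and $\frz_0$ is one-dimensional; then a single real parameter $t$ suffices, and the existence of $t$ with $\alpha_i(H-tH_0)\in\bbZ$ is immediate once one picks any $H_0\in\frz_0$ with $\alpha_i(H_0)\neq 0$. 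You should adopt this reduction: it removes the need to discuss fundamental coweights or lattice bookkeeping and makes the surjectivity of $\frz_0\to\bbR^{|\Pi\setminus\Pi'|}$ trivial.

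Your third paragraph, however, should be dropped. The ``cleaner route'' via $C_G(z)\supset L$ and eigenvalues of $\Ad(z)$ does not lead anywhere new, and the claim that the image of $Z(L_s)\to T/Z(L)_0$ lands in the $W(L_s)$-fixed subgroup and that this \emph{forces} centrality in $G$ is not justified (being fixed by $W(L_s)$ is much weaker than being fixed by $W(G)$, and even the latter does not by itself give $Z(G)$ without further argument). The actual content, as you yourself note at the end, is the coweight adjustment from your second paragraph; once that is carried out there is nothing left to prove, so the detour only obscures the argument.
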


\begin{proof}
We only need to consider the case of $G$ is simple and $\rank L_{s}=\rank G-1$, other cases reduce to this
special case. Let $\{\alpha_1,...,\alpha_{r}\}$ be a set of simple roots of $\frg$ with $\{\alpha_1,...,
\alpha_{r}\}-\{\alpha_{i}\}$ being a set of simple roots of $\frl$. Let $H'_{j}=H'_{\alpha_{j}}$ be the
corresponding co-roots (cf. \cite{Huang-Yu}). There exists $(a_1,\dots,a_{r})\in\bbR^{r}-\{0\}$ with
$H=\sum_{1\leq i\leq r}a_{i}H'_{i}$ satisfying that $\alpha_{j}(H)=0$ if and only if $j\neq i$. The group
$Z(L_s)$ consists of elements of the form $x=\exp(2\pi i H')$ for some
$H'=\sum_{1\leq j\leq r,j\neq i}b_{j}H'_{j}$ such that $\alpha_{k}(H')\in\bbZ$ for any $k\neq i$. Given an
$x\in Z(L_s)$, there exists $t\in\bbR$ such that $\alpha_{k}(H'-tH)\in\bbZ$ for any $k$ by the property of
$H$. That just means $x=\exp(2\pi i H')=\exp(2\pi i H'-2\pi i tH)\exp(2\pi i tH)\in Z(G)Z(L)_0$.
\end{proof}

\subsection{Compact simple Lie group of type $\bf G_2$}
Let $G=\G_2$. Denote by $F_1$ an elementary abelian 2-subgroup of $G$ with rank 3. By \cite{Yu},
Corollary 4.2, one has $W(F_1)=N_{G}(F_1)/C_{G}(F_1)\cong\GL(3,\mathbb{F}_2)$.

\begin{prop}\label{P:G2-classification}
Any closed abelian subgroup $F$ of $\G_2$ satisfying the condition $(\ast)$ is either a maximal
torus or is conjugate to $F_1$.
\end{prop}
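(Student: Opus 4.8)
The plan is to use Lemma \ref{L:center} to reduce to two cases depending on whether $F$ is connected. Let $F \subset \G_2$ be a closed abelian subgroup satisfying $(\ast)$, set $\fra_0 = \Lie F$ and $\fra = \fra_0 \otimes_{\bbR} \bbC$, and let $L = C_G(\fra_0)$ with $L_s = [L,L]$. By Lemma \ref{L:center} we have $\fra = Z(C_{\frg}(\fra))$, so $C_{\frg}(\fra)$ is a Levi subalgebra of $\frg_2(\bbC)$ with center exactly $\fra$. There are only a few conjugacy classes of Levi subalgebras in type $\bf G_2$, and I would enumerate them by the dimension of the center $\fra$: either $\fra$ is a full Cartan subalgebra (so $L$ is a maximal torus and $F = F_0 = L$ is a maximal torus, giving the first alternative), or $\fra$ has dimension $1$, or $\fra = 0$.

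If $\fra = 0$, then $F$ is finite and $L = G$, so $L_s = G = \G_2$ itself; this is the genuinely hard case, treated below. If $\dim \fra = 1$, then $L$ is a proper Levi subgroup of rank $2$ whose derived group $L_s$ has rank $1$, hence $L_s$ is locally isomorphic to $\SU(2)$. Since $\G_2$ is connected and simply connected, Lemma \ref{L:Levi} shows $L_s$ is simply connected, so $L_s \cong \Sp(1)$. By Lemma \ref{L:center}, $F \cap L_s$ is an abelian subgroup of $L_s \cong \Sp(1)$ satisfying $(\ast)$, and $F = F_0 \cdot (F \cap L_s) = Z(L)_0 \cdot (F \cap L_s)$. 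But Proposition \ref{P:Spin-2} (applied via $\Sp(1) \cong \Spin(3)$) shows $\Sp(1)$ has no finite abelian subgroup satisfying $(\ast)$, so this case cannot produce an $F$ with $F \cap L_s$ finite and nontrivial; one checks directly that no abelian subgroup of $\G_2$ with one-dimensional identity component satisfies $(\ast)$, eliminating this case.

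The main work is the case $\fra = 0$, i.e. $F$ finite. Here I would invoke Steinberg's theorem: since $\G_2$ is connected and simply connected, for every $x \in F$ the centralizer $G^x$ is connected, and condition $(\ast)$ forces $G^x$ to have finite center, hence to be semisimple of full rank $2$. The full-rank semisimple subgroups of $\G_2$ are classified (\cite{Oshima}); the only one with finite center that can arise as a centralizer of an involution is $(\Sp(1) \times \Sp(1))/\langle(-1,-1)\rangle \cong \SO(4)$. Thus every non-identity $x \in F$ is an involution with $G^x \cong \SO(4)$, which pins down the conjugacy class of $x$, and in particular $F$ is an elementary abelian $2$-group. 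Fixing one such involution $x$, one has $F \subset G^x \cong (\Sp(1)\times\Sp(1))/\langle(-1,-1)\rangle$, a matrix group, and the classification of abelian subgroups of $\Sp(1)/\langle -1\rangle \cong \SO(3)$ satisfying $(\ast)$ from \cite{Yu2} forces each projection of $F$ to be $\langle[\mathbf{i}],[\mathbf{j}]\rangle$. The condition $(\ast)$ together with the constraint that every nontrivial element lies in the distinguished class then rules out $\rank F > 3$ and forces $F \cong (C_2)^3$, identifying it with $F_1$.

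The step I expect to be the main obstacle is the rigidity argument in the $\fra = 0$ case: showing that $(\ast)$ together with Steinberg connectedness forces \emph{every} nonidentity element to be an involution of the single class with $G^x \cong \SO(4)$, and then controlling the rank so that $F$ is exactly rank $3$ rather than something smaller (which would fail $(\ast)$) or embeddable in a larger abelian group (contradicting that $F_1$ is the unique such class). This requires combining the matrix-group analysis inside $\SO(4)$ with a dimension count for $\frg_2^F = 0$, and it is where the $\G_2$-specific structure—rather than a formal reduction—does the real work.
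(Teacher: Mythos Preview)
Your overall strategy matches the paper's: split by $\dim F$, use Lemma~\ref{L:center} for the positive-dimensional cases via Levi subgroups, and in the finite case invoke Steinberg's theorem to force $G^{x}$ semisimple of full rank, then work inside $G^{x}$. The $\dim F=1$ case is handled a little differently---you pass to $L_{s}\cong\Sp(1)$ and appeal to Proposition~\ref{P:Spin-2}, while the paper simply observes that the compact Levi is $\U(2)$ and that the only closed abelian subgroup of $\U(2)$ satisfying $(\ast)$ is its maximal torus---but both arguments are fine.

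There is, however, a genuine gap in your treatment of the finite case. After showing that $G^{x}$ is semisimple of full rank for each $1\neq x\in F$, you assert that the only possibility ``that can arise as a centralizer of an involution'' is $\SO(4)$, and from this conclude that every non-identity element of $F$ is an involution. This is circular: at that point you have not yet shown that $x$ is an involution. The proper full-rank semisimple subgroups of $\G_2$ are of type $A_2^{L}$ and $A_1^{L}+A_1^{S}$, so a priori $G^{x}\cong\SU(3)$ or $G^{x}\cong(\Sp(1)\times\Sp(1))/\langle(-1,-1)\rangle$, and an element of order~$3$ gives the $\SU(3)$ centralizer. The paper disposes of this case by noting that $\SU(3)$ has no finite abelian subgroup satisfying $(\ast)$ (commuting unitary matrices are simultaneously diagonalizable, so any such subgroup lies in a maximal torus and the fixed subalgebra has dimension at least~$2$). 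Only after excluding the $\SU(3)$ case can one conclude that every non-identity element is an involution with $G^{x}\cong\SO(4)$; your argument needs this step.
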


\begin{proof}
Let $k=\dim F$. Then $0\leq k\leq \rank G=2$. If $k=2$, then $F$ is a maximal torus. If $k=1$,
by Lemma \ref{L:Levi}, $\fra=\Lie F\otimes_{\bbR}\bbC$ is the center of a Levi subalgebra. The
complex simple Lie group $\G_2(\bbC)$ has two conjugacy classes of Levi subgroups with center of
dimension 1, both are isomorphic to $\GL(2,\bbC)$, with maximal compact subgroup isomorphic to
$\U(2)$. However any closed abelian subgroup of $\U(2)$ satisfying the condition $(\ast)$ is a
maximal torus, we get a contraction. If $k=0$, for any $1\neq x\in F$, one has $F\subset G^{x}$
and $Z(G^{x})_0\subset F$. By Steinberg's theorem $G^{x}$ is connected. Hence it is semisimple
as $F$ is a finite subgroup. Therefore $\frg^{x}$ is of type of $A_2^{L}$ or $A_1^{L}+A_1^{S}$.
Thus $G^{x}\cong\SU(3)$ or $(\Sp(1)\times\Sp(1))/\langle(-1,-1)\rangle$. The first case can not
happen since $\SU(3)$ possesses no finite abelian subgroups satisfying the condition $(\ast)$.
In the second case, one can show that $F\sim F_1$.
\end{proof}

\subsection{Compact simple Lie group of type $\bf F_4$ }

Let $G=\F_4$. Denote by $\theta=\exp(\frac{2\pi i}{3}(2H'_3+H'_4))$. Then the root system of
$\frg^{\theta}$ has a simple system $\{\beta-\alpha_1,\alpha_1\}\sqcup\{\alpha_3,\alpha_4\}$,
which is of type $A_2^{L}+A_2^{S}$. Here $\beta=2\alpha_1+3\alpha_2+4\alpha_3+2\alpha_4$ is the
highest root. By Steinberg's theorem $G^{\theta}$ is connected. Since
\[\exp(\frac{2\pi i}{3}(H'_{\beta-\alpha_1}+2H'_{\alpha_1}))=
\exp(\frac{2\pi i}{3}(2H'_{\alpha_3}+H'_{\alpha_4})),\] one has
$G^{\theta}\cong(\SU(3)\times\SU(3))/\langle(\omega I,\omega I)\rangle$,
where $\theta=[(\omega I,I)]$. Let \[F_1=\langle[(\omega I,I)],[(A,A)],[(B,B)]\rangle,\] where
$A=\left(\begin{array}{ccc}1&&\\&\omega&\\&&\omega^{2}\\\end{array}\right)$ and
$B=\left(\begin{array}{ccc}0&1&0\\0&0&1\\1&0&0\\\end{array}\right)$. Then, $F_1$ is an abelian
subgroup of $G$. Moreover
\[C_{G}(F_1)=C_{G^{\theta}}(F_1)=((\SU(3)\times\SU(3))/\langle(\omega I,\omega I)\rangle)^{F_1}=F_1,\]
hence $F_1$ is a maximal abelian subgroup of $G$. For the involution $\sigma_1$ of $G$ (cf.
\cite{Huang-Yu}, Table 2), one has \[G^{\sigma_1}\cong(\Sp(3)\times\Sp(1))/\langle(-I,-1)\rangle.\]
Let \[F_2=\langle[(-I,1)],[(\mathbf{i}I,\mathbf{i})],[(\mathbf{j}I,\mathbf{j})],
[(I_{1,2},1)],[(I_{2,1},1)]\rangle.\] Then, it is an elementary abelian 2-subgroup of $G$. Moreover
it is a maximal abelian subgroup. The group $G$ has a Levi subgroup $L$ of type $\B_3$ and one has
\[L\cong(\Spin(7)\times\U(1))/\langle(-1,-1)\rangle.\] Let
\[F_3=\langle 1\times\U(1),[(e_1e_2e_3e_4,1)],[(e_1e_2e_5e_6,1)],[(e_1e_3e_5e_7,1)]\rangle.\] Since
\[C_{G}(F_3)=C_{L}(F_3)=(1\times\U(1))\cdot(\Spin(7))^{e_1e_2e_3e_4,e_1e_2e_5e_6,e_1e_3e_5e_7}=F_3,\]
$F_3$ is a maximal abelian subgroup of $G$.

\begin{prop}\label{P:F4-classification}
Any closed abelian subgroup $F$ of $G$ satisfying the condition $(*)$ is either a maximal torus, or is
conjugate to one of $F_1,F_2,F_3$.
\end{prop}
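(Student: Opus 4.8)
The plan is to stratify the classification by the dimension $k=\dim F$, exactly as in the $\G_2$ case (Proposition \ref{P:G2-classification}), using Lemma \ref{L:center} to reduce the positive-dimensional cases to abelian subgroups of matrix Levi factors and Steinberg's theorem together with the classification of full-rank semisimple subgroups to handle the finite case. Since $\rank\F_4=4$, we have $0\leq k\leq 4$. If $k=4$ then $F$ is a maximal torus. For $1\leq k\leq 3$, Lemma \ref{L:center} tells us that $\fra=\Lie F\otimes_\bbR\bbC$ is the center of a Levi subalgebra $\frl=C_{\frg}(\fra)$ of $\frf_4(\bbC)$, and that $F\cap L_s$ is an abelian subgroup of $L_s=[L,L]$ satisfying $(\ast)$. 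By Lemma \ref{L:Levi} each $L_s$ is simply connected.

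First I would enumerate the proper Levi subalgebras of $\frf_4(\bbC)$ whose center has the prescribed dimension $k$. For $k=3$ the center is $3$-dimensional, so $L_s$ has rank $1$ (type $A_1$), giving $L_s\cong\Sp(1)$ or $\SO(3)$; since $\Sp(1)$ and its quotient admit no finite abelian subgroup satisfying $(\ast)$ except rank-one pieces, the condition $(\ast)$ inside $L_s$ forces $F$ to be essentially toral, contradicting $k<\rank G$ unless $F$ is a maximal torus — so no new subgroup arises here. For $k=2$, the semisimple part $L_s$ has rank $2$, of type $A_2$, $A_1+A_1$, $B_2$, or $A_1$ again with larger center, with compact forms that are matrix groups ($\SU(3)$, $\Sp(1)^2$ quotients, $\Sp(2)$, etc.); I would invoke the results of \cite{Yu2} on abelian subgroups of these classical groups to check that no $F$ with $\dim\frg_0^F=\dim F$ survives except tori. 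For $k=1$, the relevant Levi factors have semisimple part of type $B_3$, $C_3$, $A_2+A_1$, or $A_1+A_1+A_1$; here the key case is the $B_3$-Levi $L\cong(\Spin(7)\times\U(1))/\langle(-1,-1)\rangle$, where Proposition \ref{P:Spin-3} classifies the finite abelian subgroups of $\Spin(7)$ as $F_7$, yielding precisely the subgroup $F_3$. The other rank-$3$ Levi factors (types $C_3$, etc.) I expect to produce only tori, by the matrix-group results of \cite{Yu2}.

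For the finite case $k=0$ I would argue as in Proposition \ref{P:G2-classification}. Take any $1\neq x\in F$; then $F\subset G^x$ and $Z(G^x)_0\subset F$, so since $F$ is finite, $G^x$ is semisimple, and by Steinberg's theorem it is connected. Hence $\frg^x$ is a full-rank semisimple subalgebra of $\frf_4$, which by \cite{Oshima} must be one of type $B_4$, $D_4$, $A_2+A_2$, $C_3+A_1$, $A_1+A_3$, or $B_2+B_2$ (i.e.\ $B_2+A_1+A_1$). The corresponding centralizers $G^x$ are matrix groups — $\Spin(9)$, $(\SU(3)\times\SU(3))/\langle(\omega I,\omega I)\rangle$, $(\Sp(3)\times\Sp(1))/\langle(-I,-1)\rangle$, and so on — except when $\frg^x$ has full rank equal to the whole $\frf_4$. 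I would then use the classification of finite abelian subgroups of these matrix groups, together with the spin-group propositions (\ref{P:Spin-3}, \ref{P:Spin-4}) for the $\Spin(9)$-type centralizer, to identify which centralizer types actually support a subgroup $F$ satisfying $(\ast)$. The subgroup $F_1$ arises from the $A_2+A_2$ centralizer $G^\theta\cong(\SU(3)\times\SU(3))/\langle(\omega I,\omega I)\rangle$ and the subgroup $F_2$ from the $C_3+A_1$ centralizer $G^{\sigma_1}\cong(\Sp(3)\times\Sp(1))/\langle(-I,-1)\rangle$; I would verify that these exhaust the finite possibilities by showing the other centralizer types admit no finite abelian subgroup with $(\ast)$.

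The main obstacle I anticipate is the bookkeeping in the $k=0$ case: unlike $\G_2$, where only two centralizer types occur, $\F_4$ has several full-rank semisimple subgroups, and for each one I must carefully match the finite abelian subgroups of the (often disconnected-looking quotient) matrix centralizer against the constraint $(\ast)$ and then descend the classification back to conjugacy in $\F_4$. In particular, distinguishing $F_1$ from $F_2$ requires tracking which element $x$ was chosen and verifying that the resulting subgroups are genuinely non-conjugate and maximal (which the preceding computations of $C_G(F_i)=F_i$ already establish for $i=1,2,3$). A secondary subtlety is ensuring that when $F$ meets several distinct centralizer types simultaneously, the conjugacy class is still pinned down uniquely; I would resolve this by always passing to a convenient generator $x$ whose centralizer is as large as possible (e.g.\ the order-$3$ element $\theta$ or the involution $\sigma_1$) and reading off $F$ inside that fixed centralizer.
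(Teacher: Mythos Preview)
Your overall strategy matches the paper's exactly: stratify by $k=\dim F$, invoke Lemma~\ref{L:center} and Lemma~\ref{L:Levi} for $0<k<4$, and use Steinberg plus the Oshima list for $k=0$. However, the paper executes both branches more efficiently, and your version has a genuine gap in the $k=0$ case.

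For $0<k<4$, rather than enumerating Levi types dimension by dimension, the paper observes in one stroke that among compact, connected, \emph{simply connected} Lie groups of rank at most $3$, only $\Spin(7)$ and $\G_2$ admit finite abelian subgroups satisfying $(\ast)$ (by Propositions~\ref{P:Spin-2}, \ref{P:Spin-3}, \ref{P:G2-classification} and the classical results in \cite{Yu2}). Since $\G_2$ is not a Levi factor of $\F_4$, this forces $L_s\cong\Spin(7)$ and yields $F\sim F_3$ immediately. Your case-by-case plan would work, but note that your Levi list is inaccurate: there is no $3A_1$ Levi in $\F_4$ (the rank-$3$ Levi types are $B_3$, $C_3$, and two $A_2+A_1$'s).

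For $k=0$, your list of full-rank semisimple sub-root-systems is incomplete (you omit $4A_1$ and $C_2+2A_1$, and there is no $B_2+B_2$), and more importantly your plan to handle the $B_4$ centralizer $\Spin(9)$ via the spin-group propositions fails: Propositions~\ref{P:Spin-3} and \ref{P:Spin-4} do not classify finite abelian subgroups of $\Spin(9)$. The paper sidesteps this entirely. It treats only two centralizer types explicitly: $A_2^L+A_2^S$ gives $F_1$, while $A_3^L+A_1^S$ is excluded by a rank-mismatch argument on the bimultiplicative functions $m_i$ (as in the $\Spin(10)$ proof). The key observation you are missing is that every remaining centralizer type ($B_4$, $D_4$, $B_2+2A_1$, $4A_1$, $C_3+A_1$, $C_2+2A_1$) corresponds to an element of order $2$. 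Hence if no $x\in F$ has centralizer of the first two types, then \emph{every} nontrivial element of $F$ is an involution, so $F$ is an elementary abelian $2$-group, and \cite{Yu}, Proposition~5.2 then gives $F\sim F_2$ directly. This avoids any case-by-case analysis of $\Spin(9)$, $\Spin(8)$, etc.
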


\begin{proof}
Denote by $k=\dim F$. Then, $0\leq k\leq\rank G=4$. If $k=4$, then $F$ is a maximal torus. If $0<k<4$,
by Lemma \ref{L:center}, there exists a closed connected subgroup $L$ of $G$ whose complexification is
a Levi subgroup such that $Z(L)_0\subset F\subset L$ and $F'=F\cap L_{s}$ is a finite abelian subgroup
of $L_{s}:=[L,L]$ satisfying the condition $(*)$. By Lemma \ref{L:Levi}, $L_{s}$ is simply connected.
Among compact connected and simply connected Lie groups of rank at most $3$, only $\Spin(7)$ and $\G_2$
has finite subgroups satisfying the condition $(*)$. Hence $L$ is of type $\B_3$. In this case
$L\cong(\Spin(7)\times\U(1))/\langle(-1,-1)\rangle$ and hence $F\sim F_3$ by Proposition \ref{P:Spin-3}.
If $k=0$, for any $1\neq x\in F$, by Steinberg's theorem $G^{x}$ is connected. Hence $G^{x}$ is
semisimple since $F\subset G^{x}$ and $F$ satisfies the condition $(*)$. Thus the root system of
$G^{x}$ is one of the types (cf. \cite{Oshima})
\[A_2^{L}+A_2^{S},\ A_3^{L}+A_1^{S},\ B_4,\ D_4,\ B_2+2A_1,\ 4A_1,\ C_3+A_1,\ C_2+2A_1.\]
If the root system of $G^{x}$ is of type $A_2^{L}+A_2^{S}$, then
\[G^{x}\cong(\SU(3)\times\SU(3))/\langle(\omega I,\omega I)\rangle\] and hence $F\sim F_1$. If the root
system is of type $A_3^{L}+A_1^{S}$, then \[G^{x}\cong(\SU(4)\times\Sp(1))/\langle(-I,-1)\rangle,\]
where $x=[(\pm{i}I,1)]$. Let $S_1,S_2$ be the images of $F$ under the projections to
$\SU(4)/\langle-I\rangle$ and $\Sp(1)/\langle-1\rangle$ respectively, and $m_1$, $m_2$ be
bimultiplicative functions on them. By \cite{Yu2}, Proposition 2.1, one has $\ker m_1=\ker m_2=1$,
$S_1\cong(C_2)^4$ and $S_2\cong(C_2)^2$. Hence $\rank S_1/\ker m_1\neq\rank S_2/\ker m_2$. On the other
hand, for any $[(A,x)],[(B,y)]\in F$, one has $m_1([A],[B])=m_2([x],[y])$. Hence
$\rank(S_1/\ker m_1)=\rank(S_2/\ker m_2)$, which is a contradiction. In the remaining cases, for any
$1\neq x\in F$, the root system of $G^{x}$ is one of of the types
\[B_4,\ D_4,\ B_2+2A_1,\ 4A_1,\ C_3+A_1,\ C_2+2A_1.\] Hence $o(x)=2$. Therefore $F$ is an elementary
abelian 2-group. By \cite{Yu}, Proposition 5.2, one can show that $F\sim F_2$.
\end{proof}

Recall that (cf. \cite{Yu}), $P(r,s;\mathbb{F}_2)$ is the group of $(r,s)$- blockwise upper triangle
matrices in $\GL(r+s,\mathbb{F}_2)$.

\begin{prop}\label{P:Weyl-F4}
One has \[W(F_1)\cong\SL(3,\mathbb{F}_3),\] \[W(F_2)\cong P(3,2;\mathbb{F}_2),\]
\[W(F_3)\cong\mathbb{F}_{2}^{3}\rtimes(\GL(3,\mathbb{F}_2)\times\{\pm{1}\}).\]
\end{prop}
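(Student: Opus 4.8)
The goal is to compute the three Weyl groups $W(F_i)=N_G(F_i)/C_G(F_i)$ for the maximal abelian subgroups $F_1,F_2,F_3$ of $G=\F_4$. Since each $F_i$ is maximal abelian, we have $C_G(F_i)=F_i$, so $W(F_i)=N_G(F_i)/F_i$, the group of automorphisms of $F_i$ induced by conjugation in $G$. The plan is to realize $W(F_i)$ as a subgroup of $\Aut(F_i)$ and to identify it by combining the fusion information (the distribution of $G$-conjugacy classes among elements of $F_i$) with a careful count of the order using the centralizer data already established.

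\textbf{The case of $F_1$.} Here $F_1\cong(\bbZ/3\bbZ)^3$ sits inside $G^{\theta}\cong(\SU(3)\times\SU(3))/\langle(\omega I,\omega I)\rangle$, and $W(F_1)$ is a subgroup of $\GL(3,\bbF_3)=\Aut(F_1)$. First I would determine the fusion: I claim every element of $F_1-\{1\}$ is $G$-conjugate either to $\theta$ (the central-type element of order $3$ whose centralizer is all of $G^{\theta}$) or to the other class of order-$3$ elements, and I would use this to pin down which linear transformations of $F_1$ can occur. The subgroup structure $(A,A)$ and $(B,B)$ together with $(\omega I,I)=\theta$ suggests that $W(F_1)$ preserves a nondegenerate symplectic-type form coming from the commutator pairing $m$, so $W(F_1)$ should land in $\SL$ rather than $\GL$. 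To get exactly $\SL(3,\bbF_3)$ one checks $|\SL(3,\bbF_3)|=11232$ against an explicit order count from $N_G(F_1)$, or exhibits enough explicit conjugating elements (Weyl-group elements of the two $\SU(3)$ factors and the outer swap of factors normalizing $F_1$) to generate a subgroup of that order. The main subtlety is verifying that the determinant-$-1$ transformations are \emph{not} realized, which is exactly the constraint that conjugation preserves the alternating commutator form up to the right scalar.

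\textbf{The cases of $F_2$ and $F_3$.} For $F_2\cong(\bbF_2)^5$ inside $G^{\sigma_1}\cong(\Sp(3)\times\Sp(1))/\langle(-I,-1)\rangle$, $W(F_2)$ is a subgroup of $\GL(5,\bbF_2)$. The claimed answer $P(3,2;\bbF_2)$ is the stabilizer of a $3$-dimensional subspace (equivalently a $2$-dimensional quotient), so the key is to identify a canonical filtration of $F_2$ that $W(F_2)$ must preserve: the fusion splits $F_2-\{1\}$ into classes distinguished by the type of $G^x$ (the list $B_4,D_4,B_2+2A_1,4A_1,C_3+A_1,C_2+2A_1$ from Proposition~\ref{P:F4-classification}), and the invariantly-defined subset of elements of a fixed $G$-class spans the distinguished subspace. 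Once the filtration is pinned down, $W(F_2)\subseteq P(3,2;\bbF_2)$ is automatic, and I would match the order by producing explicit Weyl elements from the two factors of $G^{\sigma_1}$ (the $\Sp(3)$-Weyl group acting on the $\bbF_2^3$ piece, plus the homomorphisms realizing the unipotent radical). For $F_3$, recall $F_3=\langle 1\times\U(1),[(e_1e_2e_3e_4,1)],[(e_1e_2e_5e_6,1)],[(e_1e_3e_5e_7,1)]\rangle$ sits in the Levi $L\cong(\Spin(7)\times\U(1))/\langle(-1,-1)\rangle$; the $\U(1)$ factor is the identity component $Z(L)_0$, so $W(F_3)$ acts on the finite quotient $F_3/(1\times\U(1))\cong F_7/\langle-1\rangle\cong(\bbF_2)^3$ through $W(F_7)\cong\bbF_2^3\rtimes\GL(3,\bbF_2)$, while the extra $\{\pm1\}$ factor comes from the Weyl element of $G$ acting as $-1$ on the one-dimensional torus $Z(L)_0$ (equivalently the longest element of the $\B_3$-Weyl group, or the outer action inverting $\U(1)$). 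I would assemble $W(F_3)\cong\bbF_2^3\rtimes(\GL(3,\bbF_2)\times\{\pm1\})$ by showing these two sources of symmetry are independent and together exhaust $N_G(F_3)/F_3$.

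The hardest step overall is the $F_1$ computation: proving that the realized subgroup of $\GL(3,\bbF_3)$ is exactly $\SL(3,\bbF_3)$ rather than a proper subgroup or all of $\GL$. The upper bound (landing in $\SL$) follows from preservation of the commutator form, but the lower bound—generating all of $\SL(3,\bbF_3)$—requires exhibiting enough conjugations in $G^{\theta}$ and its normalizer, which is where I expect the real work to lie. For $F_2$ and $F_3$ the answers are more transparent because the relevant Weyl data ($P(3,2;\bbF_2)$ from the $\B_3$/$\Sp(3)$ structure and $W(F_7)$ from Proposition~\ref{P:Spin-3}) are already available, so those reduce to bookkeeping once the invariant filtration is identified.
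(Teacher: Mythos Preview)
Your outline for $F_2$ and $F_3$ is broadly in line with the paper. For $F_2$ the paper simply cites \cite{Yu}, Proposition~5.5, and your filtration argument is essentially what that reference does. For $F_3$ the paper's argument is structurally the same as yours: map $W(F_3)\to\GL(3,\bbF_2)\times\{\pm1\}$ via the action on $F_3/(F_3)_0$ and on $(F_3)_0$, identify the kernel as $\Hom(F_3/(F_3)_0,(F_3)_0)\cong\bbF_2^3$, and check surjectivity. One refinement: the paper checks surjectivity by passing to a subgroup $H\cong\Spin(9)$ containing $F_3$, rather than working inside the Levi $L$; this is where the $\{\pm1\}$ factor actually comes from, and it is worth saying explicitly that you need to leave $L$ to see it.

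The $F_1$ case, however, has a genuine gap. First, your fusion claim is wrong: \emph{every} nonidentity element of $F_1$ is $G$-conjugate to $\theta$, not split into two classes. This transitivity on $F_1-\{1\}$ is the heart of the paper's argument and makes the computation immediate via orbit--stabilizer: the orbit has $26$ elements, and the stabilizer of $\theta$ in $W(F_1)$ is $N_{G^{\theta}}(F_1)/F_1\cong\bbF_3^2\rtimes\SL(2,\bbF_3)$ (a direct computation inside $(\SU(3)\times\SU(3))/\langle(\omega I,\omega I)\rangle$), giving $|W(F_1)|=26\cdot 216=5616=|\SL(3,\bbF_3)|$. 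Second, your proposed upper bound via ``preservation of the commutator pairing $m$'' does not make sense as stated: $F_1$ is abelian in $G$, so there is no commutator form on $F_1$ itself, and you would need to explain what alternating invariant you have in mind (there is one, but it is a trilinear form coming from the triple-commutator obstruction in the covering, and setting this up is more work than the paper's route). Third, a minor slip: $|\SL(3,\bbF_3)|=5616$, not $11232$; the latter is $|\GL(3,\bbF_3)|$. The paper's orbit--stabilizer approach sidesteps all of this: once you know the fusion is a single nontrivial class, both the lower bound and the identification with $\SL(3,\bbF_3)$ fall out at once, and no separate ``determinant constraint'' is needed.
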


\begin{proof}
The group $W(F_1)$ acts transitively on $F_1-\{1\}$ since any non-identity of $F_1$ is conjugate to
$\theta$. On the other hand the stabilizer of $W(F_1)$ at $\theta$ is isomorphic to
$\mathbb{F}_3^{2}\rtimes\SL(2,\mathbb{F}_3)$. Hence $W(F_1)\cong\SL(3,\mathbb{F}_3)$. By \cite{Yu},
Proposition 5.5, $W(F_2)\cong P(3,2;\mathbb{F}_2)$. For $F_3$, one has $(F_3)_0\cong\U(1)$ and
$F_3/(F_3)_0\cong(\mathbb{F}_2)^3$. Hence there is a homomorphism
$p: W(F_3)\longrightarrow\GL(3,\mathbb{F}_2)\times\{\pm{1}\}$. There is a subgroup $H$ of $G$
isomorphic to $\Spin(9)$ and containing $F_3$. From the inclusion $F_3\subset H$, one can show that
the homomorphism $p$ is surjective and $\ker p\cong\Hom(F_3/(F_3)_0, (F_3)_0)\cong\mathbb{F}_{2}^{3}$.
Therefore $W(F_3)\cong\mathbb{F}_{2}^{3}\rtimes(\GL(3,\mathbb{F}_2)\times\{\pm{1}\})$.
\end{proof}

\section{Compact simple Lie groups of type $\bf D_4$}\label{S:D4}

\subsection{Preliminaries}
Let $G=\Aut(\mathfrak{so}(8))$. Then, $G_0\cong\SO(8)/\langle-I\rangle$ and $G/G_0\cong S_3$. Let $F$ be a
closed abelian subgroup of $G$ satisfying the condition $(*)$. Then $F/(F\cap G_0)\cong 1$, $C_2$ or $C_3$.
Let $G_1=\Spin(8)\rtimes\langle\tau,\theta\rangle$, where $\theta^{2}=1$, $\tau^3=1$, $\theta\tau\theta^{-1}
=\tau^{-1}$, $\theta$ acts on $\mathfrak{so}(8,\bbC)$ as a diagram automorphism fixing the roots
$\alpha_1,\alpha_2$ and permuting the roots $\alpha_3,\alpha_4$, and $\tau$ acts on $\mathfrak{so}(8,\bbC)$
as a diagram automorphism fixing the root $\alpha_2$ and permuting the roots $\alpha_1,\alpha_3,\alpha_4$.
One has $\Spin(8)^{\theta}\cong\Spin(7)$ and $\Spin(8)^{\tau}\cong\G_2$.

\begin{lemma}\label{L:center-outer}
Let $(\fru_0,\eta)$ be the pair $(\mathfrak{so}(8),\tau)$ or $(\mathfrak{so}(8),\theta)$. If $F$ is a closed
abelian subgroup of $G$ satisfying the condition $(*)$ and with $F\cdot G_0=\langle\eta,G_0\rangle$, then
$\fra=\Lie F\otimes_{\bbR}\bbC$ is conjugate to the center of a Levi subalgebra of $\frg^{\eta}$.
\end{lemma}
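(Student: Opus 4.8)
The plan is to reduce the assertion to the inner case already settled in Lemma \ref{L:center}, applied inside the fixed-point group of a suitable representative of the coset $\eta G_0$.

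First I would choose an element $y\in F$ whose image in $G/G_0\cong S_3$ generates $\langle\eta\rangle$; such a $y$ exists because $F\cdot G_0=\langle\eta,G_0\rangle$. The first, and decisive, task is to conjugate $F$ so that $\Ad(y)$ equals the standard diagram automorphism $\eta$ of $\frg$, i.e. so that $\frg^{y}=\frg^{\eta}$. Granting this, since $F$ is abelian and $y\in F$ we have $F\subset C_{G}(y)=G^{\eta}$. By Steinberg's theorem applied to the simply connected group $\Spin(8)$, together with the realizations $\Spin(8)^{\theta}\cong\Spin(7)$ and $\Spin(8)^{\tau}\cong\G_2$, the identity component $(G^{\eta})_0$ is a connected compact Lie group with $\Lie(G^{\eta})_0\otimes_{\bbR}\bbC=\frg^{\eta}$.

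Next I would transport condition $(\ast)$ into $(G^{\eta})_0$. Set $\hat{F}=F\cap(G^{\eta})_0$, so that $\hat{F}$ is a closed abelian subgroup of $(G^{\eta})_0$ with the same identity component as $F$, whence $\dim\hat{F}=\dim F$. Because $\eta$ acts trivially on $\frg^{\eta}$ and because $\frg_0^{F}\subset\frg_0^{y}=\frg_0^{\eta}$, the $\hat{F}$-fixed subspace computed inside $\frg_0^{\eta}$ coincides with $\frg_0^{F}$; hence $\dim(\frg_0^{\eta})^{\hat{F}}=\dim\frg_0^{F}=\dim F=\dim\hat{F}$. Thus $\hat{F}$ satisfies the condition $(\ast)$ relative to $\frg^{\eta}$ inside the connected compact group $(G^{\eta})_0$, and Lemma \ref{L:center} applies verbatim to give $\fra=Z(C_{\frg^{\eta}}(\fra))$, which is precisely the center of a Levi subalgebra of $\frg^{\eta}$. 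This is the desired conclusion.

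The main obstacle is the representative choice in the first step. Elements of the coset $\eta G_0$ need not be conjugate to $\eta$: the $\theta$-coset, for instance, contains both involutions whose fixed algebra is of type $\B_3$ (conjugate to $\theta$) and involutions whose fixed algebra is of type $\A_1+\B_2$, the latter being only a pseudo-Levi of $\frg^{\theta}$. Had I run the argument with a bad representative $\sigma=\Ad(y)$, Lemma \ref{L:center} would only exhibit $\fra$ as the center of a Levi of $\frg^{\sigma}$, and a Levi of a pseudo-Levi of $\frg^{\eta}$ is not in general a Levi of $\frg^{\eta}$. I therefore expect the real work to lie in showing that, under condition $(\ast)$ and using the explicit structure of $\Aut(\mathfrak{so}(8))$ together with $\Spin(8)^{\theta}\cong\Spin(7)$ and $\Spin(8)^{\tau}\cong\G_2$, one may always select $y\in F\cap\eta G_0$ with $\Ad(y)$ conjugate to the diagram automorphism $\eta$; equivalently, that the torus part of $y$ can be absorbed so that $\frg^{y}$ attains the maximal type $\frg^{\eta}$ rather than a proper pseudo-Levi.
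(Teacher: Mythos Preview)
Your proposal has a genuine gap, and the step you flag as ``the real work'' cannot be carried out as stated: the claim that one may select $y\in F\cap\eta G_0$ with $\Ad(y)$ conjugate to $\eta$ is false in general. Take $\eta=\theta$ and $F=F_4=\SO(2)\times\langle I_{3,3},\diag\{I_{1,2},I_{1,2}\},\diag\{I_{2,1},I_{2,1}\}\rangle$ inside $(\SO(2)\times\O(6))/\langle(-I,-I)\rangle$, which appears later in the paper and satisfies the hypotheses of the lemma. Every determinant $-1$ element of the finite factor has exactly three diagonal entries equal to $-1$; hence every involution of $F_4$ lying in $\theta G_0$ is conjugate in $\O(8)/\langle -I\rangle$ to $[I_{3,5}]$, whose fixed algebra is $\mathfrak{so}(3)\oplus\mathfrak{so}(5)$ rather than $\mathfrak{so}(7)$. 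Triality does not rescue this, since any $g\in G$ carrying $\theta G_0$ to itself already lies in $\langle\theta\rangle\cdot G_0$. Thus $F_4$ contains no element conjugate to $\theta$, and your reduction to Lemma~\ref{L:center} inside $(G^{\eta})_0$ cannot be set up.

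The paper's argument avoids this obstacle entirely. It does not seek a representative equal to $\eta$; it only conjugates $F$ so that some $x\in F$ lies in $\eta T$ (with $T$ a maximal torus of $(G^{\eta})_0$) and $\fra\subset\frt$, which is always possible. Setting $\frl'=C_{\frg^{\eta}}(\fra)$, the inclusion $\fra\subset Z(\frl')$ is immediate, and for the reverse one must show $Z(\frl')\subset\frg^{F}$. Since $F\subset\langle\eta\rangle\cdot C_{G_0}(\fra)$ and $\eta$ fixes $\frl'$ pointwise, it suffices to show $[C_{\frg}(\fra),Z(\frl')]=0$. Writing $C_{\frg}(\fra)$ as a Cartan subalgebra $\frs\supset\frt$ together with the root spaces for roots vanishing on $\fra$, the $\eta$-symmetrization $X_{\alpha}+\eta(X_{\alpha})+\cdots$ of each such root vector lies in $\frl'$ and hence commutes with $Z(\frl')$; because $Z(\frl')\subset\frt\subset\frs$, the brackets $[\eta^{i}(X_{\alpha}),Z(\frl')]$ land in distinct root spaces and must vanish individually. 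This root-space computation, valid for any $x\in\eta T$, is what replaces your attempted reduction.
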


\begin{proof}
We prove the lemma for the pair $(\mathfrak{so}(8),\tau)$. The proof for the pair $(\mathfrak{so}(8),\theta)$
is similar. Choose a maximal torus $T$ of $(G^{\tau})_0$. By \cite{Yu2}, Lemma 5.4, any element in $\tau G_0$
is conjugate to an element in $\tau T$; and for any element $x\in\tau T$, $T$ is a maximal torus of $(G^{x})_0$.
Substituting $F$ by a subgroup conjugate to it if necessary, we may assume that $F$ contains an element
$x\in\tau T$ and $\fra\subset\frt$. Let $\frl'=\C_{\frg^{\tau}}(\fra)$. We show that $\fra=Z(\frl')$. Since
$\fra\subset\frt\subset\frg^{\tau}$, one has $\fra\subset Z(\frl')$. Since $F$ satisfies the condition $(*)$,
in order to show $Z(\frl')\subset\fra$, we just need to show $\Ad(y)(Z(\frl'))=0$ for any $y\in F$. It suffices
to show $\Ad(C_{G_0}(\fra))(Z(\frl'))=0$ since $F\subset C_{G}(\fra)=\langle\tau\rangle\cdot C_{G_0}(\fra)$ and
$\Ad(\tau)|_{\frl'}=\id$. As $C_{G_0}(\fra)$ is connected, we just need to show $[C_{\frg}(\fra),Z(\frl')]=0$.
Choose a Cartan subalgebra $\frs$ of $\frg$ containing $\frt$. Since $\fra\subset\frs$, one has
\[C_{\frg}(\fra)=\frs\oplus\sum_{\alpha|_{\fra}=0}\bbC X_{\alpha}.\] Thus $[\frs,Z(\frl')]=0$ follows from
$Z(\frl')\subset\frt$ as $\frt$ is a cartan subalgebra of $\frl'$. For any root $\alpha$ with $\alpha|_{\fra}=0$,
if $\tau(\alpha)=\alpha$, then $X_{\alpha}\in\frg^{\tau}$. Hence $X_{\alpha}\in\C_{\frg^{\tau}}(\fra)=\frl'$.
Therefore $[X_{\alpha},Z(\frl')]=0$. If $\tau(\alpha)\neq\alpha$, then $X_{\alpha}+\tau(X_{\alpha})+
\tau^2(X_{\alpha})\in\C_{\frg^{\tau}}(\fra)=\frl'$. Hence $[X_{\alpha}+\tau(X_{\alpha})+\tau^2(X_{\alpha}),
Z(\frl')]=0$. Since $Z(\frl')\subset\frt\subset\frs$ and $X_{\alpha}, \tau(X_{\alpha}),\tau^2(X_{\alpha})$ are
root vectors corresponding to distinct roots, one has $[X_{\alpha},Z(\frl')]=0$. This finishes the proof for
the pair $(\mathfrak{so}(8),\tau)$.
\end{proof}


\subsection{Outer automorphisms of order three}
If $F/(F\cap G_0)\cong C_3$, then $\fra=\Lie F\otimes_{\bbR}\bbC$ is conjugate to the center of a Levi
subalgebra of $\mathfrak{so}(8,\bbC)^{\tau}$ by Lemma \ref{L:center-outer}. Let
\[H=(\Sp(1)^{4}/\langle(-1,-1,1,1),(-1,1,-1,1),(-1,1,1,-1)\rangle)\rtimes\langle\tau\rangle\] be a subgroup
of $G$ corresponding to the sub-root system $4A_1=\{\beta=\alpha_1+2\alpha_2+\alpha_3+\alpha_4,\alpha_1,
\alpha_3,\alpha_4\}$ of $\D_4$. One has $\tau(x_1,x_2,x_3,x_4)\tau^{-1}=(x_1,x_4,x_2,x_3)$. Hence the root
system of $(H_0)^{\tau}$ is the sub-root system $A_1^{L}+A_1^{S}$ of the root system of $\frg^{\tau}=\frg_2$.
One has $(G_0)^{\tau}\cong\G_2$, which has an elementary abelian 2-subgroup of rank $3$. Let $F_1$ be generated
by it and $\tau$. The group $G$ has an element $\tau'\in\tau G_0$ of order 3 such that $(G_0)^{\tau'}\cong
\PSU(3)$ (cf. \cite{Helgason}), which has an elementary abelian 3-subgroup of rank $2$. Let $F_2$ be generated
by it and $\tau'$. Let $F_3=\langle\tau, T\rangle$, where $T$ is a maximal torus of $(G^{\tau})_0$. It is clear
that $F_1$, $F_2$, $F_3$ are maximal abelian subgroups of $G$.

\begin{prop}\label{P:D4-5}
Any closed abelian subgroup of $G$ satisfying the condition $(*)$ and with $F/F\cap G_0\cong C_3$ is conjugate
to one of $F_1$, $F_2$, $F_3$.
\end{prop}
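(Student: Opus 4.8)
The plan is to classify closed abelian subgroups $F$ of $G = \Aut(\mathfrak{so}(8))$ satisfying $(\ast)$ and meeting a component with an outer automorphism of order $3$, i.e.\ $F/(F\cap G_0) \cong C_3$. By Lemma \ref{L:center-outer}, after conjugation we may assume $F$ contains an element $x \in \tau G_0$ of order $3$ and that $\fra = \Lie F \otimes_{\bbR} \bbC$ is the center of a Levi subalgebra of $\frg^{\tau}$. The first step is to reduce the problem according to the dimension $k = \dim F$. Since $\frg^{\tau} = \frg_2$ has rank $2$, we have $0 \leq k \leq 2$. If $k = 2$, then $\fra$ is a Cartan subalgebra of $\frg^{\tau}$ and $F = \langle \tau, T\rangle$ with $T$ a maximal torus of $(G^{\tau})_0 \cong \G_2$; this is $F_3$. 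So the main content lies in the cases $k = 1$ and $k = 0$.

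For $k = 1$, the center $\fra$ is one-dimensional, hence the center of a Levi subalgebra of $\frg_2$ of semisimple rank $1$. As in the proof of Proposition \ref{P:G2-classification}, such a Levi subalgebra has derived group with compact form $\U(2)$-type, and I would argue that the finite part $F \cap L_s$ forced by $(\ast)$ cannot exist, since closed abelian subgroups of $\U(2)$ satisfying $(\ast)$ are maximal tori and cannot combine with the order-$3$ element to satisfy $(\ast)$ in $G$. This should rule out $k = 1$ entirely, leaving only the purely finite case. The heart of the argument is therefore $k = 0$: here $F$ is finite, and I would analyze the structure of $(G^{x})_0$ for the order-$3$ element $x \in \tau G_0$. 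Invoking Steinberg's theorem (applicable to the outer automorphism $x$, as indicated in the introduction), $G^{x}$ is essentially governed by $\frg^{\tau} = \frg_2$ or by the twisted fixed-point subgroup of type $\PSU(3)$.

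The decisive step is to show that for the order-$3$ element $x$, the identity component $(G^{x})_0$ is isomorphic either to $\G_2$ or to $\PSU(3)$, these being the two fixed-point subgroups of order-$3$ automorphisms of $\mathfrak{so}(8)$ recorded via \cite{Helgason}. I would then pass to $F \cap (G^{x})_0$, a finite abelian subgroup satisfying $(\ast)$ inside $\G_2$ or $\PSU(3)$. By Proposition \ref{P:G2-classification}, the only such subgroup of $\G_2$ is the rank-$3$ elementary abelian $2$-group $F_1'$ conjugate to the distinguished $F_1$ of that proposition; adjoining $\tau$ (or the relevant conjugate) yields $F_1 = \langle F_1', \tau\rangle$. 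Correspondingly, the unique finite abelian subgroup of $\PSU(3)$ satisfying $(\ast)$ is the rank-$2$ elementary abelian $3$-group, and adjoining $\tau'$ gives $F_2$. The main obstacle I anticipate is the precise identification of the two conjugacy classes of order-$3$ elements in $\tau G_0$ and the verification that their centralizers are exactly $\G_2$ and $\PSU(3)$; this requires care because $\tau$ and $\tau'$ lie in the same outer class but are not $G_0$-conjugate, and one must confirm that every order-$3$ element of $\tau G_0$ falls into one of these two centralizer types rather than producing a third.

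Finally, I would assemble the three cases. Having shown $k=2$ gives $F_3$, $k=1$ is impossible, and $k=0$ gives $F_1$ or $F_2$, the classification is complete. To close the loop I would confirm that $F_1, F_2, F_3$ are pairwise non-conjugate, which is immediate since they have distinct dimensions ($F_3$ is $1$-dimensional) or distinct exponents ($F_1$ has order-$2$ elements in $G_0$ while $F_2$ has order-$3$ elements in $G_0$). This matches the claim that every such $F$ is conjugate to exactly one of $F_1$, $F_2$, $F_3$.
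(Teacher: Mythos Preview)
Your overall architecture---split by $k=\dim F$, handle $k=2$ as $F_3$, rule out $k=1$, and for $k=0$ reduce to finite abelian subgroups of $(G_0)^{x}$ via Steinberg---matches the paper. But two steps do not go through as you have written them.

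\textbf{The $k=1$ case.} You invoke Proposition \ref{P:G2-classification} and speak of ``the finite part $F\cap L_s$'' inside a $\U(2)$-type group. That is the Levi of $\frg_2$, not the centralizer of $\fra_0$ in $G$. Lemma \ref{L:center-outer} only tells you that $\fra$ is the center of a Levi $\frl'$ of $\frg_2$; the group you must work in is $L=C_{G}(\fra_0)\subset G=\Aut(\mathfrak{so}(8))$, which is strictly larger than anything inside $\G_2\times\langle\tau\rangle$. The paper computes $L$ explicitly inside $H=(\Sp(1)^{4}/Z)\rtimes\langle\tau\rangle$: according to whether $\frl'$ is of type $A_1^{L}$ or $A_1^{S}$ one finds $L\sim L_1$ or $L\sim L_2$, and in each case one must isolate a suitable complement $L''_i$ and argue separately (for $L_1$ a direct $\Sp(1)$ factor kills $(\ast)$; for $L_2$ a short calculation forces $\lambda=\pm 1$ and then rules out $(\ast)$). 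Your $\U(2)$ shortcut does not see any of this.

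\textbf{The $k=0$ case.} You begin by assuming $F$ contains an element $x\in\tau G_0$ of order~$3$; Lemma \ref{L:center-outer} does not give you this, and it is not automatic from $F/(F\cap G_0)\cong C_3$. The paper instead takes an arbitrary $x\in F-F\cap G_0$, observes that $(G_0)^{x}$ is semisimple of rank $2$ with root system a sub-root system of $\G_2$, and enumerates the four possibilities $\G_2$, $A_2^{S}$, $A_1^{L}+A_1^{S}$, $A_2^{L}$. The first two give $x\sim\tau$ or $x\sim\tau'$ (order $3$), the third gives $x^{2}\sim\tau$ (so $x$ has order $6$, and one still lands in $F_1$), and the fourth is shown to be impossible. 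Only after this enumeration does one know that an order-$3$ element exists; your plan skips the $A_1^{L}+A_1^{S}$ and $A_2^{L}$ cases and therefore has a circularity.
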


\begin{proof}
Let $F$ be a closed abelian subgroup of $G$ satisfying the condition $(*)$ and with $F/F\cap G_0\cong C_3$. If
$F$ is not finite, by Lemma \ref{L:center-outer}, $\fra=\Lie F\otimes_{\bbR}\bbC$ is conjugate to the center of
a Levi subalgebra $\frl'$ of $\frg_2=\mathfrak{so}(8,\bbC)^{\tau}$. Then, the root system of $\frl'$ is of type
$A_1^{L}$, $A_1^{S}$ or $\emptyset$. Let $L=C_{G}(\fra_0)$. Then, $F\subset L$. If the root system of $\frl'$ is
$\emptyset$, then $F\sim F_3$. If the root system of $\frl'$ is of type $A_1^{L}$ or $A_1^{S}$, $L$ is conjugate
to a subgroup of $H$. Precisely \[L\sim L_1=((\Sp(1)\times\U(1)^{3})/\langle(-1,-1,1,1),(-1,1,-1,1),(-1,1,1,-1)
\rangle)\rtimes\langle\tau\rangle\] or \[L\sim L_2=((\U(1)\times\Sp(1)^{3})/\langle(-1,-1,1,1),(-1,1,-1,1),
(-1,1,1,-1)\rangle)\rtimes\langle\tau\rangle.\] For $L_1$, denote by $L''_1$ the subgroup \[\Sp(1)\times
\big((\{(x_1,x_2,x_3)\in\U(1)^{3}: x_1x_2x_3=1\}/\langle(-1,-1,1),(1,-1,-1)\rangle)\rtimes\langle\tau\rangle
\big).\] Then, $Z(L_1)_0\subset F$ and $F''=F\cap L''_1\subset L''_1$ is a finite abelian subgroup satisfying the
condition $(*)$. As $\Sp(1)$ is a direct factor of $L''_1$, it has no finite abelian subgroups satisfying the
condition $(\ast)$. For $L_2$, let \[L''_2=(\Sp(1)^{3}/\langle(-1,-1,1),(1,-1,-1)\rangle)\rtimes\langle\tau\rangle
\big.\] Then, $Z(L''_2)_0\subset F$ and $F''=F\cap L''_2\subset L''_2$ is a finite abelian subgroup satisfying the
condition $(\ast)$. We may assume that $F''$ contains an element of the form $x=(\lambda,1,1)\tau$,
$\lambda\in\U(1)$. Since $F''\subset(L''_2)^{x}$, one has $Z((L''_2)^{x})_0=1$, which forces $\lambda=\pm{1}$.
Hence $F''$ contains an element conjugate to $\tau$. One can show that in this case the condition $(\ast)$ can not
hold. If $F$ is a finite abelian subgroup of $G$, for any $x\in F-F\cap G_0$, by Steinberg's theorem one can show
that $(G_0)^{x}$ is connected. Then, $(G_0)^{x}$ is semisimple since $F$ satisfies the condition $(*)$. As the
root system of $\frg^{x}$ is a sub-root system of the root system of $\frg^{\tau}=\frg_2$, it is one of the types
$\G_2$, $A_2^{S}$, $A_1^{L}+A_1^{S}$, $A_2^{L}$. If it is of type $G_2$, one has $x\sim\tau$. In this case
$F\sim F_1$. If it is of type $A_2^{S}$, one has $x\sim\tau'$. In this case $F\sim F_2$. If it is of type
$A_1^{L}+A_1^{S}$, one has $x^2\sim\tau$. In this case $F\sim F_1$. One can show that the root system of
$\frg^{x}$ can not be of type $A_2^{L}$.
\end{proof}

\begin{prop}\label{P:Weyl-D4-1}
We have $$W(F_1)\cong\GL(3,\mathbb{F}_2)\times\{\pm{1}\},$$ $$W(F_2)\cong(\mathbb{F}_3)^2\rtimes
\GL(2,\mathbb{F}_3),$$ $$W(F_3)\cong\mathbb{F}_3\rtimes(D_6\times\{\pm{1}\}).$$
\end{prop}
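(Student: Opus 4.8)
The plan is to compute each Weyl group $W(F_i) = N_G(F_i)/C_G(F_i)$ by combining the structure of the relevant fixed-point subgroup (which acts as an ambient group in which $F_i$ sits) with the outer constraint coming from the requirement that the normalizing element preserve the coset structure of $F_i$ relative to $G_0$. For all three subgroups, the key reduction is that since $F_i$ satisfies $(\ast)$ one has $C_G(F_i) = F_i$, so $W(F_i) = N_G(F_i)/F_i$ is just the group of automorphisms of $F_i$ realized by conjugation in $G$.

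For $W(F_1)$, recall that $F_1 = \langle \tau, E\rangle$ where $E$ is a rank-3 elementary abelian $2$-subgroup of $(G_0)^\tau \cong \G_2$. Since $\tau$ has order $3$ and $E$ has exponent $2$, the subgroup $\langle\tau\rangle$ is the unique Sylow $3$-subgroup and $E$ is characteristic in $F_1$, so any conjugation preserving $F_1$ preserves both factors. The action on $E$ factors through $W_{\G_2}(E) \cong \GL(3,\mathbb{F}_2)$ by the remark preceding Proposition \ref{P:G2-classification}, and since $\G_2$ is the full fixed-point group $(G_0)^\tau$ the entire $\GL(3,\mathbb{F}_2)$ is realized while fixing $\tau$. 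The action on $\langle\tau\rangle \cong C_3$ contributes a factor $\Aut(C_3) \cong \{\pm 1\}$, realized by an element of $G$ inverting $\tau$ (the $S_3$-structure of $G/G_0$ supplies such an element, normalizing $E$ appropriately). These two actions are independent, giving $W(F_1) \cong \GL(3,\mathbb{F}_2) \times \{\pm 1\}$; the main point to verify is that the inverting element can be chosen to centralize (or act through $\GL(3,\mathbb{F}_2)$ on) $E$, which I would check directly inside $\G_2 \rtimes \langle\theta\rangle$.

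For $W(F_2)$, write $F_2 = \langle \tau', A\rangle$ with $A$ a rank-2 elementary abelian $3$-subgroup of $(G_0)^{\tau'} \cong \PSU(3)$ and $\tau'$ of order $3$. Here the situation is more delicate because \emph{all} of $F_2$ has exponent $3$, so $\langle\tau'\rangle$ is not obviously characteristic. I would instead identify $F_2$ as a rank-$3$ elementary abelian $3$-group inside a suitable $3$-group and compute the conjugation action via the known Weyl group of the Jordan subgroup of $\E$-type constructions; concretely, the elementary abelian $3$-subgroup $A \subset \PSU(3)$ has Weyl group $\SL(2,\mathbb{F}_3)$ in $\PSU(3)$, and $\tau'$ together with the outer structure extends this by a translation part $(\mathbb{F}_3)^2 = \Hom(\langle\tau'\rangle, A)$ and completes the linear part from $\SL$ to $\GL(2,\mathbb{F}_3)$. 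The claim $W(F_2) \cong (\mathbb{F}_3)^2 \rtimes \GL(2,\mathbb{F}_3)$ then follows by exhibiting the translation subgroup as $\ker$ of the map to $\GL$ of the quotient $F_2/\langle\tau'\rangle$ and checking the order via $|W(F_2)| = |(\mathbb{F}_3)^2|\cdot|\GL(2,\mathbb{F}_3)|$.

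For $W(F_3) = \langle\tau, T\rangle$ with $T$ a maximal torus of $(G^\tau)_0 \cong \G_2$, the connected part $(F_3)_0 = T$ is a $2$-torus and $F_3/(F_3)_0 \cong C_3$. Normalizers must preserve $T$, so there is a homomorphism $W(F_3) \to W((G^\tau)_0)(T) \cong W(\G_2) \cong D_6$ together with the action on $\langle\tau\rangle \cong C_3$. I would show this map lands in $D_6 \times \{\pm 1\}$, that it is surjective (the full Weyl group $D_6$ of $\G_2$ is realized inside $(G^\tau)_0$ and an inverting element for $\tau$ is available), and that the kernel is $\Hom(F_3/(F_3)_0, (F_3)_0) \cong \Hom(C_3, T[3]) \cong \mathbb{F}_3$, giving the extension $W(F_3) \cong \mathbb{F}_3 \rtimes (D_6 \times \{\pm 1\})$.

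\textbf{The hardest step} I expect is $W(F_2)$: unlike $F_1$ and $F_3$, the subgroup $F_2$ has no canonical characteristic decomposition, so pinning down both the translation part $(\mathbb{F}_3)^2$ and the enlargement of the linear part from $\SL(2,\mathbb{F}_3)$ to $\GL(2,\mathbb{F}_3)$ requires carefully tracking which outer elements of $G$ (lying in the order-$3$ components $\tau' G_0$ and its inverse) normalize $F_2$ and computing their induced action on $F_2 \cong (\mathbb{F}_3)^3$ by an explicit order count, rather than by a structural argument.
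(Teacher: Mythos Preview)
Your approach to $W(F_1)$ is essentially the paper's, and for $W(F_2)$ your outline is close in spirit, though you should use the characteristic subgroup $F_2\cap G_0=A$ (rank~$2$) rather than $\langle\tau'\rangle$, which is \emph{not} stable under $W(F_2)$: the paper shows all $18$ elements of $F_2-F_2\cap G_0$ are conjugate to $\tau'$, so the orbit of $\tau'$ has size $18$. The paper's argument is an orbit--stabilizer count on $\tau'$ (orbit size $18$, stabilizer $\SL(2,\mathbb{F}_3)$ from $\PSU(3)$) giving $|W(F_2)|=432$, then identifies the image in $\GL(2,\mathbb{F}_3)\times\{\pm1\}$ as $\{(h,t):\det h=t\}\cong\GL(2,\mathbb{F}_3)$ using an order-$2$ element $\theta'$ built from the $\alpha_2$-root $\SL_2$; the extra elements come from an order-$2$ coset, not from the order-$3$ cosets as you suggest.

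The genuine gap is in your computation of $W(F_3)$. You assert that the kernel of $p:W(F_3)\to D_6\times\{\pm1\}$ equals $\Hom(F_3/(F_3)_0,(F_3)_0)\cong\Hom(C_3,T)$, and then write this as $\mathbb{F}_3$. But $T$ is a rank-$2$ torus, so $\Hom(C_3,T)=T[3]\cong(\mathbb{F}_3)^2$, not $\mathbb{F}_3$; had you computed this correctly your method would yield the wrong order $9\cdot 12\cdot 2$. The point you are missing is that $\ker p$ only \emph{embeds} into $\Hom(C_3,T)$, and the embedding is not surjective. An element of $\ker p$ is realized by conjugation by some $g\in C_{G_0}(T)=S$, the rank-$4$ maximal torus of $G_0$, and the induced translation $\tau\mapsto g\tau g^{-1}=\tau\cdot(g^{-1}\tau g\tau^{-1})^{-1}$ lies in $(1-\Ad(\tau))(S)\cap T$. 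The paper computes this intersection directly and finds it is $\mathbb{F}_3$, not $\mathbb{F}_3^2$. So you need this extra calculation inside the ambient torus $S$; the abstract $\Hom$ group overcounts.
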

\begin{proof}
For $F_1$, both $F_1\cap G_0\cong(\mathbb{F}_2)^{3}$ and $\langle\tau\rangle$ are stable under $W(F_1)$.
Hence $W(F_1)\subset\GL(3,\mathbb{F}_2)\times\{\pm{1}\}$. As $F_1\subset\G_2\times\langle\tau,\theta\rangle$,
one gets $W(F_1)\cong\GL(3,\mathbb{F}_2)\times\{\pm{1}\}$. For $F_2$, since $(G_0)^{\tau'}\cong\PSU(3)$, one
has $\Stab_{W(F_2)}(\tau')\cong\SL(2,\mathbb{F}_3)$. On the other hand, all elements in $F_2-F_2\cap G_0$ are
conjugate to $\tau'$. Hence $|W(F_2)|=18\times |\SL(2,\mathbb{F}_3)|=9\times |\GL(2,\mathbb{F}_3)|$. There is a
homomorphism $p: W(F_2)\longrightarrow\GL(2,\mathbb{F}_3)\times\{\pm{1}\}$ from the action of $W(F_2)$ on
$F_2\cap G_0$ and on $F_2/F_2\cap G_0$. One can show that $\ker p\cong\Hom(F_2/F_2\cap G_0,F_2\cap G_0)\cong
(\mathbb{F}_3)^2$. By comparing the order, we get $|\Im p|=|\GL(2,\mathbb{F}_3)|$. Using one element in the
$\SL_2$ subgroup corresponding to the root $\alpha_2$, one can construct an element $\theta'$ with
$\theta'\tau'\theta'^{-1}=\tau'^{-1}$, normalizing $F_2$, and $\Ad(\theta')|_{(G_0)^{\tau'}}$ is an outer
automorphism of $\PSU(3)$. By this $\Im p=\{(h,t)\in\GL(2,\mathbb{F}_3)\times\{\pm{1}\}:\det h=t\}\cong
\GL(2,\mathbb{F}_3)$ and it is a subgroup of $W(F_2)$. Therfore $W(F_2)\cong(\mathbb{F}_3)^2\rtimes
\GL(2,\mathbb{F}_3)$. For $F_3$, there is a homomorphism $p: W(F_3)\longrightarrow W(T)\times\Aut(F_3/T)$.
One has $W(T)\cong D_6$ and $\Aut(F_3/T)\cong\{\pm{1}\}$. On the other hand, as $F_3\subset\G_2\times\langle
\tau,\theta\rangle$, we have $D_6\times\{\pm{1}\}\subset W(F_3)$, in particular $p$ is surjective. Given
$w\in W(F_3)$, if $w\in\ker p$, then $w=\Ad(g)$ for some $g\in G_0$. Hence $g\in S:=C_{G_0}(T)$, which is
maximal torus of $G_0$. Thus $g\tau g^{-1}\tau^{-1}\in T$. One can show that
$\{(1-\Ad(\tau))x: x\in S\}\cap T\cong\mathbb{F}_3$. Therefore $\ker p\cong\mathbb{F}_3$ and hence
$W(F_3)\cong\mathbb{F}_3\rtimes(D_6\times\{\pm{1}\})$.
\end{proof}

\subsection{Orthogonal group}

The group $\O(8)/\langle-I\rangle$ has three subgroups $H_1$, $H_2$, $H_3$ with
\[H_1\cong(\SO(2)\times\O(6))/\langle(-I,-I)\rangle,\]
\[H_2\cong(\SO(2)\times\SO(2)\times\O(4))/\langle(-I,-I,-I)\rangle,\]
\[H_3\cong(\U(2)\times\O(4))/\langle(-I,-I)\rangle.\]
In $H_1$, let \[F_4=\SO(2)\times\langle I_{3,3},\diag\{I_{1,2},I_{1,2}\},\diag\{I_{2,1},I_{2,1}\}\rangle,\]
\[F_5=\SO(2)\times\langle I_{1,5},I_{3,3},I_{5,1},\diag\{1,-1,1,-1,1,1\}\rangle,\]
\[F_6=\SO(2)\times\langle I_{1,5},I_{2,4},I_{3,3},I_{4,2},I_{5,1}\rangle.\]
In $H_2$, let $$F_7=\SO(2)\times\SO(2)\times\langle I_{1,3},I_{2,2},I_{3,1}\rangle.$$
In $H_3$, let $$F_8=Z(\U(2))\cdot\langle[(1,I_{2,2})],[(iI_{1,1},\diag\{I_{1,1},I_{1,1}\})],
[(J_1,\diag\{J'_1,J_1\})]\rangle.$$ Let $F_9=\langle\theta, T'\rangle$, where $T'$ is a maximal torus
of $(G_0)^{\theta}$.



\begin{prop}\label{P:D4-3}
Any non-finite closed abelian subgroup $F$ of $G$ satisfying the condition $(\ast)$ and with $F/(F\cap G_0)
\cong C_2$ is conjugate to one of $F_4$, $F_5$, $F_6$, $F_7$, $F_8$, $F_9$.
\end{prop}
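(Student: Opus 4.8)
The plan is to mirror the strategy already used for the order-three outer automorphisms in Proposition \ref{P:D4-5}, but now with the involution $\theta$ in place of $\tau$. Since $F$ is non-finite with $F/(F\cap G_0)\cong C_2$, we have $F\cdot G_0=\langle\theta,G_0\rangle$, so by Lemma \ref{L:center-outer} the complexified Lie algebra $\fra=\Lie F\otimes_{\bbR}\bbC$ is conjugate to the center of a Levi subalgebra $\frl'$ of $\frg^{\theta}=\mathfrak{so}(8,\bbC)^{\theta}$. Recall that $\Spin(8)^{\theta}\cong\Spin(7)$, so $\frg^{\theta}$ is of type $\B_3$. Thus $\fra$ is the center of a Levi subalgebra of a rank-$3$ algebra of type $\B_3$, and the possibilities for the root system of $\frl'$ are controlled by deleting nodes from the $\B_3$ Dynkin diagram. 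First I would enumerate these Levi subalgebras according to $\dim\fra=\dim F=k$, noting $1\leq k\leq 3$ since $F$ is non-finite but $F\neq G$ (the case $k=\rank G$ would give a torus, excluded here as $F$ meets $\theta G_0$).

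Next, for each conjugacy class of Levi subalgebra $\frl'$ I would identify the corresponding connected subgroup $L=C_{G}(\fra_0)$, which contains $F$ with $Z(L)_0\subset F$, and then apply Lemma \ref{L:center} (in the disconnected setting, as in the proof of Proposition \ref{P:D4-5}) to reduce to classifying the finite abelian subgroup $F''=F\cap L_s''$ of an appropriate derived-type subgroup satisfying the condition $(\ast)$. The six target subgroups $F_4,\dots,F_9$ are organized precisely by which $H_i$ they sit in: $F_4,F_5,F_6$ live in $H_1\cong(\SO(2)\times\O(6))/\langle(-I,-I)\rangle$ (corresponding to $Z(L)_0\cong\SO(2)$, i.e.\ $k=1$); $F_7$ lives in $H_2$ ($k=2$, two torus factors); $F_8$ lives in $H_3\cong(\U(2)\times\O(4))/\langle(-I,-I)\rangle$; and $F_9=\langle\theta,T'\rangle$ is the maximal-torus case $k=3$ where $\frl'$ has empty root system. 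I would match each Levi class to the correct $H_i$ and then invoke the classification of finite abelian subgroups of $\O(m)/\langle-I\rangle$ from \cite{Yu2} to pin down $F''$, exactly as the $\Spin$ and projective-orthogonal reductions were used earlier.

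The main obstacle I expect is the bookkeeping of the non-finite part together with the outer involution $\theta$ acting on the $\O$-type factors. Concretely, after fixing $\fra_0$ I must verify that $\theta$ (or a $G_0$-conjugate of it) genuinely normalizes the relevant $H_i$ and acts on the finite orthogonal quotient in a way compatible with the claimed generators of $F_4,\dots,F_8$; this is where the diagram-automorphism description of $\theta$ (fixing $\alpha_1,\alpha_2$, swapping $\alpha_3,\alpha_4$) must be translated into an explicit action on $\SO(8)/\langle-I\rangle$ and its subgroups. I would handle this by choosing, in each case, a representative element of $\theta G_0$ lying in the maximal torus $T'$ of $(G_0)^{\theta}$ (using \cite{Yu2}, Lemma 5.4 as in the proof of Lemma \ref{L:center-outer}), so that the centralizer structure becomes transparent and the $\theta$-fixed condition cuts the finite part down to the listed generators.

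Finally, I would dispose of spurious Levi classes by the same centralizer-dimension bookkeeping used throughout: when $\frl'$ has root system forcing a direct $\Sp(1)$ or $\U(1)$ factor untouched by $F''$, the condition $(\ast)$ fails because $\mathfrak{so}(8)^{F}\neq 0$, ruling out that class. Assembling the surviving cases yields exactly the six conjugacy classes $F_4,F_5,F_6,F_7,F_8,F_9$, and checking that each is genuinely abelian, satisfies $(\ast)$, and has $F/(F\cap G_0)\cong C_2$ completes the proof.
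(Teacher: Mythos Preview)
Your proposal is correct and takes essentially the same route as the paper: reduce via Lemma \ref{L:center-outer} to Levi subalgebras of $\frg^{\theta}\cong\mathfrak{so}(7,\bbC)$ (type $\B_3$), enumerate the Levi types, match the surviving ones to $H_1$, $H_2$, $H_3$, and discard the two spurious types ($A_1^{L}$ and $A_2^{L}$) because the relevant derived factor contains a direct $\SU(2)$ or $\SU(3)$ summand---not merely a $\U(1)$---admitting no finite abelian subgroup satisfying $(\ast)$. The paper's proof is exactly this case-by-case check, with $B_2\mapsto H_1$ (giving $F_4,F_5,F_6$), $A_1^{S}\mapsto H_2$ (giving $F_7$), $A_1^{L}+A_1^{S}\mapsto H_3$ (giving $F_8$), and $\emptyset\mapsto F_9$.
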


\begin{proof}
By Lemma \ref{L:center-outer}, $\fra=\Lie F\otimes_{\bbR}\bbC$ is conjugate to the center of a Levi subalgebra
$\frl'$ of $\mathfrak{so}(7,\bbC)=\mathfrak{so}(8,\bbC)^{\tau}$. Then, the root system of $\frl'$ is of type
$\emptyset$, $A_1^{L}$, $A_1^{S}$, $A_2^{L}$, $A_1^{L}+A_1^{S}$ or $B_2$. Let $L=C_{G}(\fra_0)$. Then,
$F\subset L$. If the root system of $\frl'$ is of type $\emptyset$, then $F\sim F_9$. If the root system of
$\frl'$ is of type $A_1^{S}$, then $L\cong H_2$. In this case $F\sim F_7$. If the root system of $\frl'$ is of
type $A_1^{L}+A_1^{S}$, then $L\cong H_3$. In this case $F\sim F_8$. If the root system of $\frl'$ is of type
$B_2$, then $L\sim H_1$. In this case $F$ is conjugate to one of $F_4$, $F_5$, $F_6$. If the root system of
$\frl'$ is of type $A_1^{L}$, then \[L\cong((\U(2)\times\SO(2)\times\SO(2))/\langle(-I,-I,-I)\rangle)\rtimes
\langle\theta\rangle,\] where $\theta(X,X_1,X_2)\theta^{-1}=(X,X_1,X_2^{-1})$. Let $L''=(\SU(2)\times\SO(2))
\rtimes\langle\theta\rangle$, where $\theta(X,Y)\theta^{-1}=(X,Y^{-1})$. Then, $F'':=F\cap L''$ is a finite
abelian subgroup of $L''$ satisfying the condition $(*)$. Since $\SU(2)$ is a direct factor of $L''$, it has
no finite abelian subgroups satisfying the condition $(\ast)$. If the root system of $\frl'$ is of type
$A_2^{L}$, then \[L\cong((\U(3)\times\SO(2))/\langle(-I,-I)\rangle)\rtimes\langle\theta\rangle,\] where
$\theta(X,Y)\theta^{-1}=(X,Y^{-1})$. Let $L''=(\SU(3)\times\SO(2))\rtimes\langle\theta\rangle$, where
$\theta(X,Y)\theta^{-1}=(X,Y^{-1})$. Then, $F''=F\cap L''$ is a finite abelian subgroup of $L'' $ satisfying
the condition $(*)$. Since $\SU(3)$ is a direct factor of $L''$, it has no finite abelian subgroups satisfying
the condition $(*)$.
\end{proof}

Among $F_4$, $F_5$, $F_6$, $F_7$, $F_8$, $F_9$, the subgroups $F_6$, $F_7$, $F_8$, $F_9$ are maximal abelian
subgroups.

\begin{prop}\label{Weyl-D4-2}
We have $$W(F_6)\cong S_6\times\{\pm{1}\},$$  $$W(F_7)\cong S_4\times D_4,$$   $$|W(F_8)|=2^{6},$$
$$W(F_9)\cong\{\pm{1}\}\rtimes(\{\pm{1}\}^{3}\rtimes S_3).$$
\end{prop}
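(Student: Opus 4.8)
The plan is to compute each $W(F_i)=N_G(F_i)/C_G(F_i)$ by first observing that, since $F_6,F_7,F_8,F_9$ are maximal abelian and satisfy $(\ast)$, any $g$ centralizing $F_i$ generates together with $F_i$ an abelian group, so $C_G(F_i)=F_i$ and $W(F_i)=N_G(F_i)/F_i$. A uniform preliminary reduction is that the image of $N_G(F_i)$ in $G/G_0\cong S_3$ must normalize $F_iG_0/G_0\cong C_2$; since the normalizer of a transposition in $S_3$ is the $C_2$ it generates, and each $F_i$ uses the $\theta$-class (so $F_iG_0/G_0$ is the transposition coset of $\O(8)/\langle-I\rangle$), we get $N_G(F_i)\subseteq\O(8)/\langle-I\rangle$. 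Thus triality contributes nothing and every computation takes place inside $\O(8)/\langle-I\rangle$, where abelian subgroups and their fusions are controlled by \cite{Yu2}. For each $i$ there is a homomorphism $p_i\colon W(F_i)\to\Aut((F_i)_0)\times\Aut(F_i/(F_i)_0)$ recording the action on the identity component and on the finite quotient; the strategy is to identify $\im p_i$ by exhibiting explicit normalizing elements and by constraining it through the fusion (the partition of $F_i$ into $G$-conjugacy classes, which $W(F_i)$ must preserve), and to compute $\ker p_i$ as the group of inner automorphisms by $C_G((F_i)_0)$ fixing $F_i/(F_i)_0$, which has the shape $\Hom(F_i/(F_i)_0,(F_i)_0)$.

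For $F_6=\SO(2)\times E$ and $F_7=\SO(2)\times\SO(2)\times E'$, with $E,E'$ the diagonal sign matrices of $\O(6)$, $\O(4)$ modulo $\pm I$, the finite quotient is acted on by coordinate permutations: the permutation matrices of $\O(6)$ (resp.\ $\O(4)$) lie in $N_G(F_6)$ (resp.\ $N_G(F_7)$) and realize $S_6$ (resp.\ $S_4$), and the fusion, which distinguishes the coordinate axes, forces these to be the only symmetries of $E$, $E'$. On the torus side, a reflection in the first two coordinates inverts the $\SO(2)$ factor of $F_6$ while centralizing $E$, giving the factor $\{\pm 1\}$; for $F_7$ one may independently invert each of the two $\SO(2)$ factors by such reflections and swap them by the block permutation interchanging coordinates $(1,2)$ and $(3,4)$, realizing the signed permutation group $\{\pm 1\}\wr S_2\cong D_4$. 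Since the coordinate supports of the torus part and the finite part are disjoint, the two groups of symmetries commute; one checks $\ker p_6$ and $\ker p_7$ are trivial (no element of $\O(8)/\langle-I\rangle$ outside $F_i$ translates $E$, $E'$ by the torus while fixing it modulo the torus). This yields $W(F_6)\cong S_6\times\{\pm 1\}$ and $W(F_7)\cong S_4\times D_4$.

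For $F_8\subset H_3=(\U(2)\times\O(4))/\langle(-I,-I)\rangle$ the identity component is $Z(\U(2))\cong\U(1)$ and the finite quotient mixes $\U(2)$- and $\O(4)$-data and contains elements of order $4$, so rather than name the isomorphism type I would compute $|W(F_8)|=|\im p_8|\cdot|\ker p_8|$: both factors are $2$-groups (the realizable symmetries of the finite quotient, constrained by fusion, together with the inversion of $\U(1)$, and the $\Hom$-kernel), and their product is $2^6$. For $F_9=\langle\theta,T'\rangle$ with $T'$ a maximal torus of $(G^\theta)_0$, whose root system is of type $B_3$, the normalizer $N_{(G^\theta)_0}(T')$ centralizes $\theta$ and induces the full Weyl group $W(B_3)\cong\{\pm 1\}^3\rtimes S_3$ on $T'$, giving a subgroup of $W(F_9)$ fixing $\theta$. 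The remaining $\{\pm 1\}$ is produced by an element of $N_G(F_9)$ lying in the $\theta$-component of $\O(8)/\langle-I\rangle$ that normalizes $T'$ and moves $\theta$ within $F_9$ (conjugating $\theta$ to $\theta t$ for a suitable $t\in T'[2]$); checking that it contributes exactly one further $\mathbb{Z}/2$ and assembling the extension gives $W(F_9)\cong\{\pm 1\}\rtimes(\{\pm 1\}^3\rtimes S_3)$.

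The main obstacle is the $F_9$ case: locating the extra involution and pinning down its action requires understanding $C_G(T')$ and the way $\theta$ sits in $\O(8)/\langle-I\rangle$ through the $8$-dimensional spin representation of $\Spin(7)$, and then verifying that $W(F_9)$ is exactly the order-$96$ extension claimed and not something larger. The other delicate points are the fusion arguments for $F_6$, $F_7$ (that only coordinate permutations, and not a larger automorphism group of $E$, $E'$, are realized), which rely on the explicit conjugacy-class data, and the careful order bookkeeping of the image and the $\Hom$-kernel for $F_8$.
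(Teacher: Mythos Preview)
Your proposal is correct and follows essentially the same approach as the paper. The paper argues case by case: for $F_6$ and $F_7$ it bounds $W(F_i)$ above by ``preservation of eigenvalues'' (your fusion argument) and realizes the bound via the inclusions $F_6\subset(\O(2)\times\O(6))/\langle(-I,-I)\rangle$ and similarly for $F_7$; for $F_8$ it notes $C_G((F_8)_0)=H_3$ and computes there; for $F_9$ it uses exactly your projection $p\colon W(F_9)\to W(T')$, identifies the image as $\{\pm1\}^3\rtimes S_3$, and computes $\ker p\cong\{\pm1\}$ via $C_G(T')=S\rtimes\langle\theta\rangle$ with $S$ a maximal torus of $G_0$, finishing by viewing $F_9\subset\O(7)$. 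Your uniform preliminary reduction $N_G(F_i)\subset\O(8)/\langle-I\rangle$ (from the normalizer of a transposition in $S_3$ being itself) is not stated explicitly in the paper but is used implicitly throughout, and your systematic use of the map $p_i\colon W(F_i)\to\Aut((F_i)_0)\times\Aut(F_i/(F_i)_0)$ packages the same computations the paper does ad hoc.
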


\begin{proof}
By considering the preservation of eigenvalues, one can show that  $W(F_6)\subset S_6\times\{\pm{1}\}$.
Using $F_6\subset(\O(2)\times\O(6))/\langle(-I,-I)\rangle$, one gets $W(F_6)=S_6\times\{\pm{1}\}$. Similarly
one shows that $W(F_7)\cong S_4\times D_4$. One has $(F_8)_0\cong\U(1)$ being the maximal torus of an $\SU(2)$
subgroup and $C_{G}((F_8)_0)=H_3$. Considering in $H_3$ shows $|W(F_8)|=2^{6}$. For $F_9$, there is a
homomorphism $p: W(F_9)\longrightarrow W(T')$, which is apparently a surjective map. One has $W(T_1)\cong
\{\pm{1}\}^{3}\rtimes S_3$. Using $C_{G}(T)=S\rtimes\langle\theta\rangle$, where $S$ is a maximal torus of
$G_0$, one shows $\ker p\cong\{\pm{1}\}$. Considering $F_9$ as a subgroup of $\O(7)$, one can show that
$W(F_9)\cong\{\pm{1}\}\rtimes(\{\pm{1}\}^{3}\rtimes S_3)$.
\end{proof}

The group $G_0\cong\PSO(8)$ has a subgroup $H_4$ with \[H_4\cong\Sp(1)^{4}/\langle(-1,-1,1,1),(-1,1,-1,1),
(-1,1,1,-1)\rangle.\] In $H_4$, let \[F_{10}=((\U(1)\times\U(1))\cdot\langle[(\textbf{i},\textbf{i})],
[(\textbf{j},\textbf{j})]\rangle)/Z,\]  \[F_{11}=(\U(1)\cdot\langle[(\textbf{i},\textbf{i},\textbf{i})],
[(\textbf{j},\textbf{j},1)],[(1,\textbf{j},\textbf{j})]\rangle)/Z,\] where $Z=\langle(-1,-1,1,1),(-1,1,-1,1),
(-1,1,1,-1)\rangle$. In $H_1$, let \[F_{12}=\SO(2)\times\langle I_{2,4},I_{4,2},\diag\{I_{1,1},I_{1,1},I_2\},
\diag\{I_{2},I_{1,1},I_{1,1}\}\rangle.\] One sees that $F_{10}$ is conjugate to a subgroup of $F_7$, $F_{12}$
is conjugate to a subgroup of $F_6$, and $F_{11}$ is a maximal abelian subgroup of $G$.


\begin{prop}\label{P:D4-1}
Any non-finite closed abelian subgroup $F$ of $G$ satisfying the condition $(\ast)$ and with $F\subset G_0$
is either a maximal torus, or is conjugate to one of $F_{10}$, $F_{11}$, $F_{12}$.
\end{prop}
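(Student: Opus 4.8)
The plan is to mirror the structure used in Proposition \ref{P:D4-3}, applying Lemma \ref{L:center} to reduce the classification to finite abelian subgroups of derived groups of Levi subgroups inside $G_0\cong\PSO(8)$. Since $F\subset G_0$ is non-finite, $\fra=\Lie F\otimes_{\bbR}\bbC$ is the center of a Levi subalgebra $\frl$ of $\mathfrak{so}(8,\bbC)$ of type $\D_4$. First I would enumerate the possible root systems for such a Levi subalgebra according to the dimension $\dim F=\dim Z(\frl)$. If $\dim F=4$ then $F$ is a maximal torus, so I may assume $1\leq\dim F\leq 3$, which forces the root system of $\frl$ to be one of the proper sub-root systems of $\D_4$ whose complement spans a center of positive dimension; the relevant types are $\emptyset$ (excluded since $F$ would be a torus), $A_1$, $A_1+A_1$, $2A_1$, $3A_1$, $A_2$, $A_3$, and $A_1+A_1+A_1$ arising inside the $4A_1$ subsystem.

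For each admissible type I would identify $L=C_G(\fra_0)$ up to conjugacy, write down $L_s=[L,L]$, and invoke Lemma \ref{L:center} so that $F\cap L_s$ is a finite abelian subgroup of $L_s$ satisfying $(\ast)$. The key arithmetic input is that, by the earlier propositions and \cite{Yu2}, among the low-rank simply connected compact groups only $\Spin(7)$, $\G_2$, and products of $\Sp(1)$-type factors with appropriate identifications admit finite abelian subgroups satisfying $(\ast)$, while factors isomorphic to a direct $\SU(2)$ or $\SU(3)$ summand kill the condition. Thus for Levi types whose derived group has a direct $\Sp(1)$ or $\SU(3)$ factor (the analogues of the $A_1^L$, $A_2^L$ cases in Proposition \ref{P:D4-3}) I would rule them out exactly as there, by exhibiting the offending direct factor and noting it has no $(\ast)$-subgroups. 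The surviving cases are those where $L$ is conjugate into $H_1$ (type $\B_2$-like, giving $F\sim F_{12}$ as a subgroup of $F_6$) or into $H_4\cong\Sp(1)^4/Z$ (the $4A_1$ subsystem, giving the quaternionic subgroups $F_{10}$ and $F_{11}$).

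For the $H_4$ cases I would set $\fra_0$ equal to the Lie algebra of a one- or two-dimensional subtorus of $\Sp(1)^4/Z$ and compute $L_s$; when $\dim F=2$ the centralizer produces $F_{10}=((\U(1)\times\U(1))\cdot\langle[(\mathbf{i},\mathbf{i})],[(\mathbf{j},\mathbf{j})]\rangle)/Z$, and when $\dim F=1$ with the appropriate diagonal torus it produces $F_{11}$. The finite part $F\cap L_s$ is then classified using the standard fact that a finite abelian subgroup of $\Sp(1)/\langle-1\rangle$ satisfying $(\ast)$ is conjugate to $\langle[\mathbf{i}],[\mathbf{j}]\rangle$, together with the bimultiplicative-form matching of projections across the $\Sp(1)$ factors exactly as in the proofs of Propositions \ref{P:Spin-3} and \ref{P:Spin-4}. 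The main obstacle I anticipate is bookkeeping: enumerating all conjugacy classes of Levi subalgebras of $\D_4$ and their centralizers $L$ without omission, and then verifying that the distinct diagonal embeddings of the quaternionic elements inside $\Sp(1)^4/Z$ do not yield further conjugacy classes beyond $F_{10}$ and $F_{11}$. This requires care because the triality symmetry of $\D_4$ and the specific quotient $Z$ can identify or separate subgroups that look different at first glance, so I would track the fusion of elements carefully to confirm exactly three conjugacy classes $F_{10}$, $F_{11}$, $F_{12}$ survive alongside the maximal torus.
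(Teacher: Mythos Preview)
Your plan matches the paper's: invoke Lemma \ref{L:center}, list the proper Levi types of $\D_4$, and classify the resulting finite $(\ast)$-subgroups of $L_s$ inside either $H_1$ or $H_4$. One correction: there is no ``$\B_2$-like'' Levi in $\D_4$---you are importing labels from the $\B_3$ setting of Proposition \ref{P:D4-3}; here the paper simply groups the types $A_1,2A_1,3A_1$ into $H_4$ (yielding $F_{10},F_{11}$) and the types $A_2,A_3$ into $H_1$ (yielding $F_{12}$), and your enumeration has redundant entries ($A_1{+}A_1=2A_1$, $A_1{+}A_1{+}A_1=3A_1$).
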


\begin{proof}
Let $\fra=\Lie F\otimes_{\bbR}\bbC$, $\frl=C_{\frg}(\fra)$ and $L=C_{G_0}(\fra)$. By Lemma \ref{L:center},
$\fra=Z(\frl)$ and $F':=F\cap L_{s}$ is a finite abelian subgroup of $L_{s} $ satisfying the condition
$(\ast)$, where $L_{s}=[L,L]$. The root system of $\frl$ is of type $\emptyset$, $A_1$, $A_2$, $2A_1$, $A_3$
or $3A_1$. If the root system of $\frl$ is $\emptyset$, then $F$ is a maximal torus. If the root system of
$\frl$ is of type $A_1$, $2A_1$ or $3A_1$, we may assume that $F\subset H_4$. In this case $F$ is conjugate
to $F_{10}$ or $F_{11}$. If the root system of $\frl$ is of type $A_2$ or $A_3$, we may assume that
$F\subset H_1$. In this case $F$ is conjugate to $F_{12}$.
\end{proof}

\begin{prop}\label{Weyl-D4-3}
We have $|W(F_{10})|=3\times 2^{6}$, $|W(F_{11})|=3\times 2^{7}$, and $W(F_{12})\cong S_6\times\{\pm{1}\}$
\end{prop}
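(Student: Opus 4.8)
The general principle throughout is that, since each $F_i$ satisfies $(\ast)$, one has $C_G(F_i)_0=(F_i)_0$, so that $W(F_i)=N_G(F_i)/C_G(F_i)$ acts faithfully on $F_i$ and preserves its fusion; the entire computation amounts to determining which automorphisms of $F_i$ are realized by conjugation in $G$, exactly as in Proposition \ref{Weyl-D4-2}.

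For $F_{12}$ the plan is to repeat the argument for $W(F_6)$. First I would note that the finite part of $F_{12}$ consists of the classes of all \emph{even} sign changes in $\O(6)$ (the four listed generators have even weight, and modulo $\langle -I\rangle$ they already span the full group of even sign changes). Since this set is stable under permutations of the six coordinate axes, and these axes are precisely the common eigenlines of the finite part (their centralizer in $\O(6)$ being the diagonal subgroup, because every pair of coordinates is separated by some even sign change), preservation of eigenspaces gives an embedding $W(F_{12})\hookrightarrow S_6\times\{\pm 1\}$, where $S_6$ permutes the six axes and $\{\pm 1\}$ records the action on the $\SO(2)$ factor. For the reverse inclusion I would exhibit permutation matrices in $\O(6)$, which normalize $F_{12}$ and centralize $\SO(2)$, to realize $S_6$, together with an element of $\O(2)\setminus\SO(2)$ inside $H_1\cong(\SO(2)\times\O(6))/\langle(-I,-I)\rangle$ to invert $\SO(2)$; hence $W(F_{12})\cong S_6\times\{\pm 1\}$.

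For $F_{10}$ and $F_{11}$ I would work inside $H_4\cong\Sp(1)^4/\langle(-1,-1,1,1),(-1,1,-1,1),(-1,1,1,-1)\rangle$, the subgroup attached to the subsystem $4A_1=\{\beta,\alpha_1,\alpha_3,\alpha_4\}$. The key extra symmetry is the triality element $\tau$, which by $\tau(x_1,x_2,x_3,x_4)\tau^{-1}=(x_1,x_4,x_2,x_3)$ cyclically permutes the three $\Sp(1)$ factors indexed by $\alpha_1,\alpha_3,\alpha_4$ while fixing the factor indexed by $\beta$; this is the source of the factor $3$ in both orders. For $j\in\{10,11\}$, since $(F_j)_0$ is a subtorus of $H_4$ and $F_j$ satisfies $(\ast)$, I would study the homomorphism $p\colon W(F_j)\to\Aut((F_j)_0)\times\Aut(F_j/(F_j)_0)$ given by the actions on the identity component and on the component group. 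I would identify $\ker p\cong\Hom(F_j/(F_j)_0,(F_j)_0)$ from the centralizer of $(F_j)_0$ in $H_4$, and determine the (finite) image $\Im p$ by writing down explicit normalizing elements: the triality $\tau$ (together with $\theta$); the factorwise sign changes realized by conjugation by the $\textbf{j}$-type quaternions (the $A_1$-reflections sending $\textbf{i}\mapsto-\textbf{i}$); and an element inverting the torus $(F_j)_0$. Multiplying $|\ker p|$ by $|\Im p|$ should yield $|W(F_{10})|=3\times 2^6$ and $|W(F_{11})|=3\times 2^7$.

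The main obstacle is the precise determination of $\Im p$ for $F_{10}$ and $F_{11}$: one must verify both that the listed elements (triality, sign changes, torus inversion) genuinely normalize $F_j$ and that no further automorphism of $F_j$ is induced by $N_G(F_j)$. This is exactly a fusion analysis, namely deciding which pairs of elements of $F_j$ are $G$-conjugate, and it is where the interplay between the $W(4A_1)$-action inside $\PSO(8)$ and the triality $S_3$ must be controlled carefully. By comparison the computation of $\ker p$ and the entire $F_{12}$ argument are routine.
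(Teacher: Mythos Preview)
Your approach for $F_{12}$ is identical to the paper's, and for $F_{10}$ and $F_{11}$ it is correct and close in spirit but uses a slightly different decomposition. The paper projects $W(F_j)$ only to $W((F_j)_0)$ and then analyzes the kernel by an orbit--stabilizer argument inside $C_G((F_j)_0)$: for $F_{10}$ one finds $|W((F_{10})_0)|=8$ and $|\ker p|=6\times 4$, while for $F_{11}$ one has $W((F_{11})_0)\cong\{\pm 1\}$ and, using that $C_G((F_{11})_0)$ lies in the group $H=(\Sp(1)^4/Z)\rtimes\langle\tau\rangle$, counts the $\ker p$--orbit of $x=[(\textbf{i},\textbf{i},\textbf{i})]$ (eight elements) and identifies the stabilizer via the explicit centralizer $C_G(\langle(F_{11})_0,x\rangle)=F_{11}\cdot(\U(1)^3/Z'\times\langle x_1,x_2\rangle)\rtimes\langle\theta,\tau\rangle$, giving $|\ker p|=8\times 4\times 6$. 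Your version instead refines $p$ to land in $\Aut((F_j)_0)\times\Aut(F_j/(F_j)_0)$ and pushes the triality and the sign-change symmetries into $\Im p$ rather than $\ker p$. This is a legitimate repackaging of the same ingredients (the $H_4$--structure, the action of $\tau$ and $\theta$, explicit quaternionic conjugations), and the obstacle you flag---pinning down $\Im p$ exactly---is precisely what the paper handles instead by its orbit and centralizer computation. One small point to be careful with: your assertion $\ker p\cong\Hom(F_j/(F_j)_0,(F_j)_0)$ is an equality, not merely an inclusion, and needs the verification that every such homomorphism is actually induced by an element of $C_G((F_j)_0)$; this is what the paper's explicit description of that centralizer supplies.
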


\begin{proof}
There is a homomorphism $p: W(F_{10})\longrightarrow W((F_{10})_0)$. Using $F_{10}\subset H_4$ and
an outer involution normalizing $H_4$, one shows that $|\ker p|=6\times 4$. One can show that
$|W((F_{10})_0)|=8$ and $p$ is surjective. Hence $|W(F_{10})|=3\times 2^{6}$. For $F_{11}$, denote
by $x=[(\textbf{i},\textbf{i},\textbf{i})]$, $x_1=[(\textbf{j},\textbf{j},1)]$,
$x_2=[(1,\textbf{j},\textbf{j})]$, There is a homomorphism $p: W(F_{11})\longrightarrow W((F_{11})_0)$,
which is apparently surjective and $W((F_{11})_0)\cong\{\pm{1}\}$. One has $C_{G}((F_{11})_0)\subset H$,
the subgroup defined ahead of Proposition \ref{P:D4-5}. There are eight elements in the $\ker p$ orbit
of $x$ and one has $$C_{G}(\langle(F_{11})_0,x\rangle)=F_{11}\cdot(\U(1)^{3}/Z'\times\langle x_1,x_2\rangle)
\rtimes\langle\theta,\tau\rangle.$$ Hence $|\ker p|=8\times 4\times 6$. Therefore
$|W(F_{11})|=3\times 2^{7}$. Similarly as $F_6$, one can show that $W(F_{12})\cong S_6\times\{\pm{1}\}$.
\end{proof}

\subsection{Finite abelian subgroups}

Recall that $\SO(8)/\langle-I\rangle$ possesses several conjugacy classes of involutions with
representatives $[I_{4,4}]$, $[I_{2,6}]$, $[J_4]$, $[I_{1,7}J_{4}I_{1,7}^{-1}]$. The involution
$[I_{4,4}]$ is not conjugate to any of $[I_{2,6}]$, $[J_4]$, $[I_{1,7}J_4I_{1,7}^{-1}]$ even in
$G=\Aut(\mathfrak{so}(8))$ since the fixed point subalgebras in $\mathfrak{so}(8)$ are non-isomorphic.
However, $[I_{2,6}]$, $[J_4]$, $[I_{1,7}J_{4}I_{1,7}^{-1}]$ are actually conjugate in $G$. This could
be seen in the following way. The elements $x_1=e_1e_2$, $x_2=\Pi$, $x_3=e_1\Pi e_1^{-1}$ of $\Spin(8)$
map to them under the projection $\pi: \Spin(8)\longrightarrow\PSO(8)$, where
$$\Pi=\frac{1+e_1e_2}{\sqrt{2}}\frac{1+e_3e_4}{\sqrt{2}}\frac{1+e_5e_6}{\sqrt{2}}\frac{1+e_7e_8}{\sqrt{2}}.$$
In $\Spin(8)$, $x_1$ and $cx_1$ represent the conjugacy classes of elements $x$ with $x^2=-1$, similarly
$x_2$ and $-x_2$ ($x_3$ and $-x_3$) represent the conjugacy classes of elements $x$ with $x^2=c$ ($x^2=-c$).
As $-1$, $c$, $-c$ are conjugate to each other in $\Spin(8)$, therefore $\pi(x_1)$, $\pi(x_2)$, $\pi(x_3)$
are conjugate to each other in $G$. Note that the centralizer of $[J_4]$ in $\O(8)/\langle-I\rangle$ is
contained in $\SO(8)/\langle-I\rangle$, but the centralizer of $[I_{2,6}]$ in $\O(8)/\langle-I\rangle$
intersects the non-neutral connected component as well. This is a way to distinguish the conjugacy class
of $[I_{2,6}]$ from the conjugacy class of $[J_4]$ in $\O(8)/\langle-I\rangle$.

In $\O(8)/\langle-I\rangle$, let
\begin{eqnarray*}F_{13}&=&\langle[I_{4,4}],[J'_4],[\diag\{I_{2,2},I_{2,2}\}],[\diag\{J'_{2},J'_{2}\}],
[\diag\{I_{1,1},I_{1,1},I_{1,1},I_{1,1}\}],\\&&[\diag\{J'_{1},J'_{1},J'_{1},J'_{1}\}]\rangle,\end{eqnarray*}
\begin{eqnarray*}F_{14}&=&\langle[I_{4,4}],[\diag\{I_{2,2},I_{2,2}\}],[\diag\{J'_{2},J'_{2}\}],
[\diag\{I_{1,1},I_{1,1},I_{1,1},I_{1,1}\}],\\&&[\diag\{J'_{1},J'_{1},J'_{1},J'_{1}\}]\rangle,\end{eqnarray*}
\begin{eqnarray*}F_{15}&=&\langle[I_{4,4}],[\diag\{I_{2,2},I_{2,2}\}],[\diag\{J'_{2},J'_{2}\}],
[\diag\{I_{1,1},I_{1,1},I_{1,1},I_{1,1}\}],\\&&[\diag\{J'_{1},J'_{1},J_{1},J_{1}\}]\rangle,\end{eqnarray*}
\[F_{16}=\langle[I_{4,4}],[\diag\{I_{2,2},I_{2,2}\}],[\diag\{I_{1,1},I_{1,1},I_{1,1},I_{1,1}\}],
[\diag\{J'_{1},J'_{1},J'_{1},J'_{1}\}]\rangle,\]
\[F_{17}=\langle[I_{4,4}],[I_{2,6}],[I_{6,2}],
[\diag\{I_{1,1},I_{1,1},I_{1,1},I_{1,1}\}],[\diag\{J'_{1},J'_{1},J'_{1},J'_{1}\}]\rangle,\]
\[F_{18}=\langle[I_{4,4}],[I_{2,6}],[I_{6,2}],
[\diag\{I_{1,1},I_{1,1},I_{1,1},I_{1,1}\}],[\diag\{J'_{1},J'_{1},J'_{1},J_{1}\}]\rangle,\]
\[F_{19}=\langle[I_{4,4}],[I_{2,6}],[I_{6,2}],
[\diag\{I_{1,1},I_{1,1},I_{1,1},I_{1,1}\}],[\diag\{J'_{1},J'_{1},J_{1},J_{1}\}]\rangle,\]
\[F_{20}=\langle[I_{4,4}],[I_{2,6}],[I_{6,2}],
[\diag\{I_{1,1},I_{1,1},I_{1,1},J_{1}\}],[\diag\{J'_{1},J'_{1},J_{1},I_{1,1}\}]\rangle,\]
\[F_{21}=\langle[I_{4,4}],[\diag\{I_{2,2},I_{2,2}\}],[\diag\{I_{1,1},I_{1,1},I_{1,1},I_{1,1}\}]\}]
\rangle.\]

\begin{prop}\label{P:D4-2}
A non-diagonalizable finite abelian subgroup $F$ of $\O(8)/\langle-I\rangle$ satisfying the condition
$(\ast)$ is conjugate to one of $F_{13}$, $F_{14}$, $F_{15}$, $F_{16}$, $F_{17}$, $F_{18}$, $F_{19}$,
$F_{20}$, $F_{21}$.
\end{prop}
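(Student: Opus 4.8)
The plan is to run the same machine as in the proof of Proposition \ref{P:Half Spin twelve}, namely the structure theory of abelian subgroups of projective orthogonal groups from \cite{Yu2}, Section 3. To the finite abelian subgroup $F\subset\O(8)/\langle-I\rangle$ one attaches its diagonalizable subgroup $B_F$ together with the bimultiplicative (symplectic) form $m$ recording the failure of the preimage of $F$ in $\O(8)$ to be abelian; by \cite{Yu2}, Proposition 3.1 this datum is packaged in a factorization $8=2^{k}s_0$, where $2^{k}$ is the size of the irreducible Clifford block and $s_0$ the number of blocks. I would first split off the \emph{abelian-preimage} case, $m\equiv 1$: here the preimage of $F$ in $\Spin(8)$ is a finite abelian subgroup satisfying $(\ast)$, hence is conjugate to $F_8$ by Proposition \ref{P:Spin-3}, and its projection is $F\sim F_{21}$. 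Since $F$ is non-diagonalizable this is the only source of $F_{21}$, and the remaining subgroups have $m\not\equiv 1$, equivalently $k\geq 1$, so $(k,s_0)\in\{(1,4),(2,2),(3,1)\}$.

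For each value of $k$ I would fix a non-diagonalizable involution $x\in F$; by the remark preceding the proposition every such involution is a $G$-conjugate of $[J_4]$, and its centralizer in $\O(8)$ is the unitary group $\U(4)$, so $F$ lies in a subgroup isomorphic to a $\bbZ/2$-enlargement of $\U(4)/\langle-I\rangle$. On this matrix group \cite{Yu2}, Proposition 2.1 classifies the finite abelian subgroups satisfying $(\ast)$ in terms of elementary abelian $2$-subgroups and their bimultiplicative forms; this is the step that turns the abstract Clifford datum into the explicit block patterns $J'_4,J'_2,J'_1,J_1$ and forces $\log_2 s_0\leq\rank B_F\leq s_0-1$. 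Concretely I expect $k=3$ (so $B_F$ trivial) to give the single ``Pauli'' subgroup $F_{13}$ of rank $6$; $k=2$ (so $\rank B_F=1$, necessarily of $I_{4,4}$-type) to give exactly $F_{14}$ and $F_{15}$; and $k=1$ (so $\rank B_F\in\{2,3\}$) to give $F_{16},\dots,F_{20}$.

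The subdivision inside $k=1$ is governed by the conjugacy classes of the involutions of $B_F$. Because $[I_{4,4}]$ is rigid while $[I_{2,6}]$, $[J_4]$ and $[I_{1,7}J_4I_{1,7}^{-1}]$ are fused in $G$, the decisive question is whether $B_F$ contains a conjugate of $I_{2,6}$: if not one obtains $F_{16}$ with $\rank B_F=2$, and if so one obtains the four subgroups $F_{17},F_{18},F_{19},F_{20}$ with $\rank B_F=3$. Within the latter family the members differ only by the quadratic refinement of $m$, i.e.\ by how many generators square to $[I_{4,4}]$ rather than to the identity; this $4$-torsion (the presence of $J_1$ in place of $J'_1$) is precisely what separates $F_{18},F_{19},F_{20}$ from $F_{17}$, and likewise $F_{15}$ from $F_{14}$. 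Throughout, condition $(\ast)$, which for finite $F$ reads $\mathfrak{so}(8)^{F}=0$, supplies the lower bound on $\rank B_F$ needed so that the isotypic decomposition of $\bbR^{8}$ carries no invariant skew form, and it eliminates the borderline sub-cases in which a nonzero invariant survives.

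I expect the genuine difficulty to lie in the bookkeeping rather than in any isolated idea: one must track simultaneously the class of $B_F$, the integer $k$, and the quadratic refinement of $m$, and then verify that exactly nine subgroups survive $(\ast)$ with no repetition or omission. The cleanest route is to normalize $B_F$ first, list the compatible non-abelian-preimage extensions permitted by \cite{Yu2}, Proposition 2.1, and discard those with $\mathfrak{so}(8)^{F}\neq 0$; the delicate points will be classifying the orbits of quadratic forms correctly (so that $F_{17},\dots,F_{20}$ are neither merged nor overcounted) and keeping track of the conjugacies induced by the determinant $-1$ elements of $\O(8)$, which is what allows one to conclude that the list $F_{13},\dots,F_{21}$ is complete.
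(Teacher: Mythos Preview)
Your outline has a genuine gap in the main reduction step. You propose, for each $k\geq 1$, to fix a non-diagonalizable involution $x\in F$ (i.e.\ an element $[A]$ with $A^2=-I$, conjugate to $[J_4]$ in $\O(8)/\langle-I\rangle$) and then work inside its centralizer $\U(4)$. But such an involution need not exist. Take $F_{18}$: writing $\tilde c=\diag\{I_{1,1},I_{1,1},I_{1,1},I_{1,1}\}$ and $\tilde d=\diag\{J'_1,J'_1,J'_1,J_1\}$ for lifts of the two generators outside $B_{F_{18}}$, one has $\tilde c\tilde d=-\tilde d\tilde c$, $\tilde c^2=I$, $\tilde d^2=I_{6,2}$, and $(\tilde c\tilde d)^2=-I_{6,2}$. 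Since every lift of an element of $F_{18}$ has square lying in $\{I,\pm I_{6,2}\}$, no element of $F_{18}$ is $\O(8)/\langle-I\rangle$-conjugate to $[J_4]$, and your $\U(4)$-reduction never gets started for this subgroup. The same obstruction occurs for $F_{20}$. The remark before the proposition only says such involutions are $G$-conjugate to $[J_4]$; it does not manufacture one inside $F$.

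The paper avoids this by not passing through a centralizer at all. It invokes the classification theorem from \cite{Yu2}, Section~2 (not Section~3): the conjugacy class of $F$ in $\O(8)/\langle-I\rangle$ is completely determined by the linear-algebraic invariant $(F/B_F,m,\mu_1,\dots,\mu_{s_0})$, where each $\mu_i$ is the quadratic form recording how the generators square on the $i$-th block. For $k=1$ the four $\mu_i$ are quadratic refinements of $m$ on a rank-$2$ symplectic space, and the case split is simply by their coincidence pattern: all equal ($F_{16}$ or $F_{17}$), three equal ($F_{18}$), two pairs ($F_{19}$), a pair and two singletons ($F_{20}$). This is close in spirit to your ``quadratic refinement'' bookkeeping, but it is applied directly to the blocks rather than after a reduction to $\U(4)$, and so it covers the $J_1$-laden cases uniformly.

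A smaller point: your handling of $F_{21}$ is muddled. The condition $m\equiv 1$ means the preimage in $\O(8)$ is abelian, hence $F$ \emph{is} diagonalizable; this case is vacuous under the hypothesis, and lifting to $\Spin(8)$ is neither needed nor correct (an abelian preimage in $\O(8)$ does not imply an abelian preimage in $\Spin(8)$). The paper simply records $k=0\Rightarrow F\sim F_{21}$ as the degenerate boundary of the enumeration.
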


\begin{proof}
By \cite{Yu2}, Section 2, we have a diagonalizable subgroup $B_{F}$ such that $F/B_{F}$ is an elementary
abelian 2-group, an integer $k=\frac{1}{2}\rank(F/ B_{F})$ and another integer $s_0$ such that
$8=s_0\cdot 2^{k}$; moreover the conjugacy class of $F$ as a subgroup of $\O(8)/\langle-I\rangle$ is
determined by the linear structure $(F/B_{F},m,\mu_1,\dots,\mu_{s_0})$, where each $\mu_{i}$ satisfies
$(F/B_{F},m,\mu_{i})\cong V_{0,k;0,0}$ (cf. \cite{Yu}, Subsection 2.4). Since we assume that $F$ is
non-diagonalizable, $k=1$, $2$ or $3$. If $k=3$, then $F\sim F_{13}$. If $k=2$, then $F\sim F_{14}$ or
$F_{15}$. If $k=1$, then $s_0=4$. We have several cases to consider: $\mu_1=\mu_2=\mu_3=\mu_4$;
$\mu_1=\mu_2=\mu_3\neq\mu_4$; $\mu_1=\mu_2\neq\mu_3=\mu_4$; $\mu_1=\mu_2\neq\mu_3,\mu_4$ and $\mu_3
\neq\mu_4$. In the case of $\mu_1=\mu_2=\mu_3=\mu_4$, $F\sim F_{16}$ or $F_{17}$. In the case of
$\mu_1=\mu_2=\mu_3\neq\mu_4$, one has $F\sim F_{18}$. In the case of $\mu_1=\mu_2\neq\mu_3=\mu_4$, one
has $F\sim F_{19}$. In the case of $\mu_1=\mu_2\neq\mu_3,\mu_4$ and $\mu_3\neq\mu_4$, one
has $F\sim F_{20}$. If $k=0$, then $F\sim F_{21}$.
\end{proof}

Note that, $[I_{2,6}]$ and $[J_{4}]$ are conjugate in $G$ and there are no more distinct conjugacy classes
of involutions in $\O(8)/\langle-I\rangle$ which lie in the same conjugacy class in $G$.

\begin{prop}\label{P:D4-6}
In $G$, the subgroups $F_{13}$, $F_{14}$, $F_{16}$, $F_{17}$, $F_{18}$, $F_{19}$, $F_{20}$, $F_{21}$ are
non-conjugate to each other. One has $F_{15}\sim F_{19}$ and the normalizer of $F_{17}$ meets all connected
components of $G$. In $G$, any diagonalizable finite abelian subgroup $F$ of $\SO(8)/\langle-I\rangle$
satisfying the condition$(\ast)$ is conjugate to one of $F_{13}$, $F_{14}$, $F_{16}$, $F_{21}$.
\end{prop}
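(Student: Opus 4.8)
The plan is to separate these subgroups by a triality-equivariant invariant living in the center of the universal cover. Write $\rho\colon\Spin(8)\to\SO(8)/\langle-I\rangle$ for the covering, so its kernel is $Z(\Spin(8))=\langle-1,c\rangle\cong\mathbb{F}_2^2$. To a finite abelian $F\subset\SO(8)/\langle-I\rangle$ satisfying $(\ast)$ I attach the maps $q(x)=\tilde{x}^{2}$ and $m(x,y)=\tilde{x}\tilde{y}\tilde{x}^{-1}\tilde{y}^{-1}$, formed from lifts $\tilde{x},\tilde{y}\in\Spin(8)$; both are well defined into $Z(\Spin(8))$ and satisfy $q(xy)=m(x,y)q(x)q(y)$. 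By Proposition \ref{P:D4-2} the linear structure of \cite{Yu2} is a complete $\O(8)/\langle-I\rangle$-conjugacy invariant; I refine it by recording the value subgroup $V_F=\langle\operatorname{Im}q,\operatorname{Im}m\rangle\subseteq Z(\Spin(8))$. Every $g\in G$ acts on $\Spin(8)$, hence on $Z(\Spin(8))\cong\mathbb{F}_2^2$, through the triality $G/G_0\cong S_3\cong\GL(2,\mathbb{F}_2)$, fixing $1$ and permuting $\{-1,c,-c\}$; and $q,m$ are equivariant for this action. Since $\O(8)/\langle-I\rangle=\langle G_0,\theta\rangle$ has index $3$ in $G$ (the element $\theta$ already realizing the transposition $c\leftrightarrow-c$), passing from $\O(8)/\langle-I\rangle$-conjugacy to $G$-conjugacy adds exactly the order-$3$ triality, so two subgroups of Proposition \ref{P:D4-2} are $G$-conjugate if and only if their structures $(F;q,m)$ agree after some element of $S_3$ acts on the values.

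For the first assertion I first separate by order: $|F_{13}|=2^{6}$, $|F_{16}|=2^{4}$, $|F_{21}|=2^{3}$, while $F_{14},F_{17},F_{18},F_{19},F_{20}$ all have order $2^{5}$. Among these five I use two $G$-invariants. The first is whether $F$ is elementary abelian: $F_{14}$ and $F_{17}$ are, whereas $F_{18},F_{19},F_{20}$ contain the order-$4$ element $[\diag\{J'_1,J'_1,J_1,J_1\}]$ or an analogue, and $G$-conjugacy preserves element orders; the numbers of order-$4$ elements and the $G$-types of their squares, both read from $(q,m)$, then distinguish $F_{18},F_{19},F_{20}$ from one another. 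The second invariant is $\operatorname{rank}V_F$, which is $G$-invariant because $S_3$ acts linearly on $Z(\Spin(8))$: a short commutator computation gives $\operatorname{rank}V_{F_{14}}=1$ but $\operatorname{rank}V_{F_{17}}=2$ (indeed $F_{17}$ contains an involution with $q=-1$ together with a pair whose lift-commutator equals $c$), which separates $F_{14}$ from $F_{17}$. This distinguishes all eight subgroups.

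Next I would treat the two special claims. For $F_{15}\sim F_{19}$, both have order $2^{5}$ and share the same order-$4$ element, and I verify that their structures $(F;q,m)$ coincide once a triality is applied; concretely I exhibit an order-$3$ element $t\in\tau G_0$ carrying the value line of $F_{15}$ to that of $F_{19}$ and matching generators after an internal change of basis, so that $tF_{15}t^{-1}=F_{19}$. For the normalizer of $F_{17}$, the point is that $V_{F_{17}}=Z(\Spin(8))$ has rank $2$ and the three subtypes $-1,c,-c$ occur in $F_{17}$ symmetrically, with equal multiplicities, so the full $S_3$ stabilizes the structure $(F_{17};q,m)$; realizing each such symmetry by an actual element of $G$ shows $N_G(F_{17})\twoheadrightarrow G/G_0\cong S_3$, i.e. $N_G(F_{17})$ meets every connected component.

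Finally, a diagonalizable $F\subset\SO(8)/\langle-I\rangle$ satisfying $(\ast)$ is elementary abelian and is the image of a group of diagonal $\pm1$ matrices; condition $(\ast)$ forces all eight weight spaces to be one–dimensional, so $F$ is encoded by an even binary code $C\ni\mathbf{1}$ of length $8$ with no weight-$2$ word, whence $3\le\operatorname{rank}F\le6$, and a brief weight-and-permutation argument yields a unique such code, hence a unique conjugacy class of diagonalizable $F$, in each rank $r\in\{3,4,5,6\}$ (the rank-$3$ case being the extended Hamming code). For diagonal elements $q$ and $m$ take values only in $\langle-1\rangle$, so $\operatorname{rank}V_F\le1$; conversely each of $F_{13},F_{14},F_{16},F_{21}$ has $\operatorname{rank}V_F=1$, so some triality sends its value line to $\langle-1\rangle$ and makes it diagonalizable, matching the four ranks $6,5,4,3$. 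Since $F_{17}$ has $\operatorname{rank}V_{F_{17}}=2$ and $F_{18},F_{19},F_{20}$ are not elementary abelian, none of them is $G$-conjugate to a diagonalizable subgroup, and the last assertion follows. I expect the main obstacle to be the explicit determination of the value subgroups $V_{F_i}$ and of the fusion counts through the triality action on $Z(\Spin(8))$, together with the combinatorial uniqueness of the separating codes.
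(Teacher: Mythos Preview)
Your approach is genuinely different from the paper's, and it is sound in outline, though several computational claims remain to be verified.

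The paper argues more directly. For non-conjugacy it first compares $|F_i|$ and $|\{x\in F_i:x^2=1\}|$, reducing to the ambiguous pairs $(F_{14},F_{17})$ and $(F_{15},F_{18})$, and then separates these by ad hoc invariants: $F_{14}$ contains a Klein four subgroup whose nontrivial elements are all conjugate to $[J_4]$ while $F_{17}$ does not, and the squares in $F_{15}$ generate $\langle[I_{4,4}]\rangle$ while those in $F_{18}$ generate $\langle[I_{2,6}]\rangle$. For $F_{15}\sim F_{19}$ and for the normalizer of $F_{17}$ the paper simply applies $\Ad(\tau)$ and identifies the resulting subgroup inside the list of Proposition~\ref{P:D4-2} by order, exponent, and the presence of a $[J_4]$-class element. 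For the diagonalizable statement the paper again applies $\Ad(\tau)$: a diagonalizable $F$ with an $[I_{2,6}]$-element is carried to a non-diagonalizable subgroup (since $\tau$ sends the class of $[I_{2,6}]$ to that of $[J_4]$), which is then located among $F_{13},F_{14},F_{16}$ via Proposition~\ref{P:D4-2}; if no $[I_{2,6}]$-element exists then every nontrivial element is of type $[I_{4,4}]$ and $F\sim F_{21}$.

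Your route packages these phenomena into the single triality-equivariant invariant $(q,m)$ valued in $Z(\Spin(8))$. This is more conceptual and explains \emph{why} $\Ad(\tau)$ interchanges the classes it does: the three involution types $[I_{2,6}],[J_4],[I_{1,7}J_4I_{1,7}^{-1}]$ correspond to $q$-values $-1,c,-c$, permuted by $S_3$. Your separation of $F_{18},F_{19},F_{20}$ by the $G$-types of squares of order-$4$ elements is exactly right (the squares lie in $\{[I_{2,6}]\}$, $\{[I_{4,4}]\}$, $\{[I_{2,6}],[I_{4,4}]\}$ respectively), and your $\operatorname{rank}V_F$ criterion correctly distinguishes $F_{14}$ from $F_{17}$. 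The code-theoretic uniqueness you invoke for diagonalizable subgroups does hold (for each rank $r\in\{3,4,5,6\}$ the dual code $C^\perp$ is forced to be generated by $\mathbf{1}$ and weight-$4$ words, and such codes are unique up to $S_8$), though note that the ``no weight-$2$ word'' condition must be imposed on $C^\perp$, not on $C$, and that $q$ as you define it is only well defined on the $2$-torsion of $F$. Also $V_{F_{21}}$ is trivial (rank $0$), not rank $1$; this does not affect your argument since rank $\le 1$ is what you actually need. The explicit realizations you defer for $F_{15}\sim F_{19}$ and for $N_G(F_{17})$ are unavoidable and amount to the same $\Ad(\tau)$-computation the paper performs.
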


\begin{proof}
By comparing $|F_{i}|$ and $|\{x\in F_{i}: x^2=1\}|$, one sees that the only possible pairs of conjugate
subgroups among $F_{13}$, $F_{14}$, $F_{15}$, $F_{16}$, $F_{17}$, $F_{18}$, $F_{19}$, $F_{21}$ are
$(F_{14},F_{17})$ and $(F_{15},F_{18})$. For the pair $(F_{14},F_{17})$, we note that $F_{14}$ contains
a Klein four subgroup with involutions all conjugate to $[J_{4}]$ however $F_{17}$ does not contain such
a Klein four subgorup. Hence they are not conjugate in $G$. For the pair $(F_{15},F_{18})$, we note that
$\{x^2: x\in F_{15}\}=\langle[I_{4,4}]\rangle$ and $\{x^2: x\in F_{18}\}=\langle[I_{2,6}]\rangle$.
As $[I_{4,4}]$ and $[I_{2,6}]$ are not conjugate in $G$, one gets that $F_{15}$ and $F_{18})$ are not
conjugate in $G$.

Let $$K=\langle[I_{2,6}],[I_{4,4}],[I_{6,2}],[\diag\{I_{1,1},I_{1,1},I_4\}],[\diag\{I_{1,1},I_4,I_{1,1}\}],
[\diag\{I_4,I_{1,1},I_{1,1}\}]\rangle.$$ Since $\tau[I_{2,6}]\tau^{-1}$ and $[J_4]$ are conjugate in
$\SO(8)/\langle-I\rangle$, $\Ad(\tau) K$ is an elementary abelian 2-subgroup of $\O(8)/\langle-I\rangle$
satisfying the condition $(\ast)$ and with an element conjugate to $[J_4]$, hence being non-diagonalizable.
Since $\rank K=6$, one sees that $\Ad(\tau) K$ is conjugate to $F_{13}$ in $\O(8)/\langle-I\rangle$. Hence
$K$ is conjugate to $F_{13}$ in $G$. Similar argument shows that any diagonalizable finite abelian subgroup
$F$ of $\SO(8)/\langle-I\rangle$ satisfying the condition $(\ast)$ and with an element conjugate to $I_{2,6}$
is conjugate to one of $F_{13}$, $F_{14}$, $F_{16}$. If $F$ is a diagonalizable finite abelian subgroup $F$
of $\SO(8)/\langle-I\rangle$ satisfying the condition $(\ast)$ and without any element conjugate to $I_{2,6}$,
then any non-trivial element of $F$ is conjugate to $I_{4,4}$. Therefore $F$ is conjugate to one of $F_{21}$.

Now $F_{19}$ is a finite abelian subgroup of $\SO(8)/\langle-I\rangle$ satisfying the condition $(\ast)$ and
with an element conjugate to $[I_{2,6}]$ in $\O(8)/\langle-I\rangle$. Hence $\Ad(\tau) F_{19}$ is a finite
abelian subgroup of $\O(8)/\langle-I\rangle$ satisfying the condition $(\ast)$ and with an element conjugate
to $[J_{4}]$ in $\O(8)/\langle-I\rangle$. By comparing the order and the exponent, one sees that
$\Ad(\tau) F_{19}$ is conjugate to $F_{15}$ in $\O(8)/\langle-I\rangle$. Therefore $F_{15}$ is conjugate to
$F_{19}$ in $G$. Similarly, $\Ad(\tau) F_{17}$ is a finite abelian subgroup of $\O(8)/\langle-I\rangle$
satisfying the condition $(\ast)$ and with elements conjugate to $[J_{4}]$ and $[I_{2,6}]$ in
$\O(8)/\langle-I\rangle$. Hence $\Ad(\tau) F_{17}$ is conjugate to $F_{17}$ in $\O(8)/\langle-I\rangle$.
Therefore the normalizer of $F_{17}$ in $G$ meets all connected components of $G$.
\end{proof}

We do not discuss in details the classification of diagonalizable finite abelian subgroups of
$\O(8)/\langle-I\rangle$ satisfying the condition $(\ast)$ and being not contained in $\SO(8)/\langle-I
\rangle$, which requires some combinatorial counting. We only remark that there is a unique conjugacy
class of maximal abelian subgroups among them. Denote by $F_{22}$ the subgroup of diagonal matrices in
$\O(8)/\langle-I\rangle$. Then, $F_{22}$ lies in this conjugacy class. Among $F_{13}$, $F_{14}$, $F_{16}$,
$F_{17}$, $F_{18}$, $F_{19}$, $F_{20}$, $F_{21}$, $F_{22}$, the subgroups $F_{17}$, $F_{18}$, $F_{19}$,
$F_{20}$, $F_{22}$ are maximal abelian subgroups of $G$.

\begin{prop}\label{Weyl-D4-4}
One has $|W(F_{17})|=3^{2}\times 2^{10}$, $|W(F_{18})|=3\times 2^{7}$, $|W(F_{19})|=2^{9}$,
$|W(F_{20})|=2^{8}$, and $W(F_{13})\cong W(F_{22})\cong S_8$.
\end{prop}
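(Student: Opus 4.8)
The plan is to compute each $W(F_i)=N_G(F_i)/C_G(F_i)$ by first pinning down the centralizer and then bounding the normalizer from both sides. Since every $F_i$ here satisfies $(\ast)$ and is finite, $C_G(F_i)$ is finite; for the five maximal abelian subgroups $F_{17},F_{18},F_{19},F_{20},F_{22}$ one has $C_G(F_i)=F_i$, so $W(F_i)=N_G(F_i)/F_i$ embeds into $\Aut(F_i)$ as the group of conjugation-induced automorphisms. Any such automorphism must preserve the fusion of $F_i$ (the distribution of $G$-conjugacy classes of its elements) together with the finer invariant attached to an abelian subgroup of $\O(8)/\langle-I\rangle$ in \cite{Yu2}, namely the bilinear form $m$, the distinguished diagonalizable subgroup $B_{F_i}$, and the tuple $(\mu_1,\dots,\mu_{s_0})$ used in Proposition \ref{P:D4-2}. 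So for each $F_i$ I would (i) compute the group $A_i$ of automorphisms preserving all of this data, giving $W(F_i)\le A_i$, and (ii) exhibit enough explicit normalizing elements in $N_G(F_i)$ to force equality.

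I would dispatch the two $S_8$ cases first, as they are cleanest. The subgroup $F_{22}$ is the image of the diagonal $\pm1$ group; because $F_{22}$ separates all eight coordinate lines, its centralizer is itself, its normalizer in $\O(8)/\langle-I\rangle$ is the full monomial (signed-permutation) group, and $N/C$ is exactly the permutation action on the eight lines, whence $W(F_{22})\cong S_8$ (and no outer component of $G$ enlarges it). The subgroup $F_{13}$ is \emph{not} maximal, so here $C_G(F_{13})\ne F_{13}$; instead I would use Proposition \ref{P:D4-6}, by which $F_{13}$ is $G$-conjugate to the diagonalizable even-sign group $K$. Since $K$ separates the eight coordinate lines, $C_G(K)$ is the whole diagonal group (a conjugate of $F_{22}$) and $N_G(K)$ is again monomial — coordinate permutations preserve the even-sign subgroup $K$ — so $W(F_{13})=W(K)\cong S_8$.

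For $F_{17},F_{18},F_{19},F_{20}$ the $2$-part of $|W|$ is produced by the sign-change and coordinate-permutation symmetries compatible with the linear structure $(F_i/B_{F_i},m,\mu_1,\dots,\mu_{s_0})$ that classifies each $F_i$ in Proposition \ref{P:D4-2}; I would read the $2$-powers off from how the $\mu_j$ coincide, the case $\mu_1=\cdots=\mu_4$ of $F_{17}$ being the most symmetric (largest power $2^{10}$), while the forced splittings in $F_{19}$ and $F_{20}$ cut this down to $2^{9}$ and $2^{8}$. The factors of $3$ and $3^{2}$ come from the order-$3$ outer (triality) elements of $G$: by Proposition \ref{P:D4-6} the normalizer of $F_{17}$ meets every connected component of $G$, so $\tau$ contributes the extra symmetry yielding the $3^{2}$ in $W(F_{17})$ and the single $3$ in $W(F_{18})$, whereas for $F_{19}$ and $F_{20}$ the normalizer stays inside $G_0$ and no factor of $3$ appears. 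In each case the lower bound is supplied by writing down explicit monomial matrices in $\O(8)$, the products of $\frac{1+e_ie_j}{\sqrt{2}}$-type elements used throughout Section \ref{S:D4}, and $\tau$, and checking that they normalize $F_i$ and induce the claimed automorphisms.

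The main obstacle is the realizability (surjectivity) step together with the exact bookkeeping of the $2$-powers: one must verify that \emph{every} invariant-preserving automorphism is actually induced by an element of $N_G(F_i)$, in particular deciding correctly for each $F_i$ whether the triality components contribute (and whether a factor $3$ or $3^{2}$ results), and counting precisely which diagonal sign-changes lie in $N_G(F_i)$ modulo $C_G(F_i)$. Distinguishing the conjugacy behaviour of $[I_{4,4}]$ from that of $[I_{2,6}]\sim[J_4]$, as recorded in Proposition \ref{P:D4-6}, is what ultimately separates the four orders $2^{10},2^{7},2^{9},2^{8}$.
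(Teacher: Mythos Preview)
Your treatment of $F_{22}$ and $F_{13}$ is correct and matches the paper's argument (the paper phrases the $F_{13}$ case as: $F_{13}$ is conjugate to a subgroup $K'$ of $F_{22}$ with $C_G(K')=F_{22}$, whence $W(F_{13})\cong W(F_{22})\cong S_8$).

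For $F_{17}$--$F_{20}$, however, your attribution of the factors of $3$ is wrong, and this is a genuine gap. You claim that the single $3$ in $|W(F_{18})|$ comes from triality, but in fact the normalizer of $F_{18}$ lies entirely in $\O(8)/\langle -I\rangle$. The reason is the one you yourself flag at the end: $F_{18}$ contains elements conjugate in $\O(8)/\langle -I\rangle$ to $[I_{2,6}]$ but none conjugate to $[J_4]$, and any element of $G$ outside $\O(8)/\langle -I\rangle$ swaps these two $G_0$-classes; so no triality element can normalize $F_{18}$. The same reasoning confines $N_G(F_{19})$ and $N_G(F_{20})$ to $\O(8)/\langle -I\rangle$ (not merely to $G_0$ as you write). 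The factor $3$ in $|W(F_{18})|=3\cdot 2^7$ therefore has nothing to do with $\tau$; it comes from the $S_3$ permuting the three blocks with $\mu_1=\mu_2=\mu_3$. Concretely, the paper lets $K=\langle[I_{4,4}],[I_{2,6}],[I_{6,2}]\rangle$, observes $W(K)\cong S_4$, and computes each $|W(F_i)|$ as $|\ker p|\cdot|\operatorname{Im}p|$ for the natural map $p:W(F_i)\to W(K)$; the image is governed by the coincidence pattern of the $\mu_j$ (all of $S_4$ for $F_{17}$, an $S_3$ for $F_{18}$, etc.), and $|S_4|$, $|S_3|$ already carry factors of $3$.

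Likewise, for $F_{17}$ only \emph{one} of the two $3$'s is due to triality (Proposition \ref{P:D4-6} gives $[N_G(F_{17}):N_{\O(8)/\langle -I\rangle}(F_{17})]=3$); the other $3$ sits inside $|S_4|=24$ coming from the block permutations in $\O(8)$. If you follow your plan literally---extracting only the $2$-part from the monomial symmetries and then multiplying by $3$ or $3^2$ ``from $\tau$''---you will overcount for $F_{18}$ or else get $F_{18}$ right only by accident while miscounting the $2$-part. The fix is to run the $W(K)$-image/kernel computation inside $\O(8)/\langle -I\rangle$ first, and then, for $F_{17}$ alone, append the extra factor $3$ coming from Proposition \ref{P:D4-6}.
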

\begin{proof}
First note that the normalizers of $F_{18}$, $F_{19}$, $F_{20}$ are contained in $\O(8)/\langle-I\rangle$
since they contain elements conjugate to $[I_{2,6}]$ and contain no elements conjugate to $[J_4]$ in
$\O(8)/\langle-I\rangle$. Let $K=\langle[I_{4,4}],[I_{2,6}],[I_{6,2}]\rangle.$ Then, $K$ is a subgroup
of each of $F_{18}$, $F_{19}$, $F_{20}$ and is stable under the conjugation action of Weyl groups.
Apparently one has $W(K)\cong S_4$. By identifying the images and kernels of the homomorphism
$p: W(F_{i})\longrightarrow W(K)$ for $i=18$, $19$, $20$, we get that $|W(F_{18})|=3\times 2^{7}$,
$|W(F_{19})|=2^{9}$, $|W(F_{20})|=2^{7}$. We could also appeal to \cite{Yu2}, Proposition 3.4 to get
the orders of Weyl groups. By a similar argument one can show that $|W(F_{17})|=3^{2}\times 2^{10}$ by
noting that there are extra elements in the Weyl group coming from normalizer outside
$\O(8)/\langle-I\rangle$ by Proposition \ref{P:D4-6}. The subgroup $F_{13}$ is conjugate to a subgroup
$K'$ of $F_{22}$, which is stable under $W(F_{22})$. Moreover $C_{G}(K')=F_{22}$, hence
$W(F_{13})\cong W(K')=W(F_{22})$. It is clear that $W(F_{22})\cong S_8$.
\end{proof}

Jun Yu \\ School of Mathematics, \\ Institute for Advanced Study, \\ Einstein Drive, Fuld Hall, \\
Princeton, NJ 08540, USA \\
email:junyu@math.ias.edu.

\end{document}